\newcommand{\A}{\mathcal{A}}
\newcommand{\B}{\mathcal{B}}
\newcommand{\I}{\mathcal{I}}
\newcommand{\R}{\mathbb{R}}
\newcommand{\T}{\mathcal{T}}
\newcommand{\W}{\mathcal{W}}
\newcommand{\Z}{\mathbb{Z}}
\newcommand{\Q}{\mathbb{Q}}
\newcommand{\N}{\mathbb{N}}
\newcommand{\indi}{\mathbf{1}_{\left[0,1\right]}}
\newcommand{\pert}{\{1,\dots,6\}}
\newcommand{\genre}{\{1,\dots,g\}}
\newcommand{\id}{\operatorname{Id}}
\newcommand{\bijlab}{\mathfrak{S}_6}
\newcommand{\zkkt}{z_{\mbox{\scriptsize KKT}}}
\newcommand{\Zkkt}{\mathcal{Z}_{\mbox{\scriptsize KKT}}}
\newcommand{\zlmo}{z_{\mbox{\scriptsize LMO}}}
\theoremstyle{plain}
\newtheorem{thm}{Theorem}[section]
\newtheorem{lem}[thm]{Lemma}
\newtheorem{prop}[thm]{Proposition}
\theoremstyle{definition}
\newtheorem{defi}[thm]{Definition}
\newtheorem{exemple}[thm]{Example}
\newtheorem{nota}[thm]{Notation}
\theoremstyle{remark}
\newtheorem{rque}[thm]{Remark}
\newcommand{\tetra}{\begin{tikzpicture}\useasboundingbox (-.1,0) rectangle (.7,.5);
\draw (0,0) -- (.6,0) -- (.3,.5) -- (0,0);
\draw (.6,0) -- (.3,.2) -- (.3,.5) (0,0) -- (.3,.2);
\fill (0,0) circle (1.5pt) (.6,0) circle (1.5pt) (.3,.5) circle (1.5pt) (.3,.2) circle (1.5pt);
\end{tikzpicture}}
\newcommand{\tetraeq}{\begin{tikzpicture}\useasboundingbox (-.1,0) rectangle (.7,0.5);
		\begin{scope}[yshift=-0.1cm]
		\draw (0,0) -- (.6,0) -- (.3,.5) -- (0,0);
		\draw (.6,0) -- (.3,.2) -- (.3,.5) (0,0) -- (.3,.2);
		\fill (0,0) circle (1.5pt) (.6,0) circle (1.5pt) (.3,.5) circle (1.5pt) (.3,.2) circle (1.5pt);
		\end{scope}
\end{tikzpicture}}
\newcommand{\tatata}{\begin{tikzpicture}\useasboundingbox (-.1,0) rectangle (.7,.5);
\draw (.1,0) -- (.5,0)  (.1,.5) -- (.5,.5);
\draw (.1,0) ..  controls (-.05,.25) .. (.1,.5);
\draw (.1,0) ..  controls (.25,.25) .. (.1,.5);
\draw (.5,0) ..  controls (.35,.25) .. (.5,.5);
\draw (.5,0) ..  controls (.65,.25) .. (.5,.5);
\fill (.1,0) circle (1.5pt) (.1,.5) circle (1.5pt) (.5,0) circle (1.5pt) (.5,.5) circle (1.5pt);
\end{tikzpicture}}
\newcommand{\cycletetra}{\begin{tikzpicture}[scale=0.4]
\useasboundingbox (-1.2,0.1) rectangle (1.2,1);
\draw (-1,0)--(1,0) node[midway,sloped]{\scriptsize$>$};
\draw (1,0)--(0,1) node[midway,sloped]{\scriptsize$<$};
\draw (0,1)--(-1,0) node[midway,sloped]{\scriptsize$<$};
\end{tikzpicture}}
\newcommand{\cycletheta}{\begin{tikzpicture}[scale=0.4]
\useasboundingbox (-1.2,-0.25) rectangle (1.2,0.25);
\draw (-1,0).. controls (0,0.5).. (1,0) node[midway,sloped]{\scriptsize$>$};
\draw (-1,0)..controls(0,-0.5)..(1,0) node[midway,sloped]{\scriptsize$<$};
\end{tikzpicture}}
\newcommand{\antisym}{\begin{tikzpicture}[scale=0.6]
\useasboundingbox (-1,-1) rectangle (4,1);
\coordinate (a) at (0,0);
\coordinate (b) at (30:1);
\coordinate (c) at (150:1);
\coordinate (d) at (0,-1);
\draw[dashed] (a) circle (1);
\draw [very thick] (d) -- (a) -- (c);
\draw [very thick] (a) --(b);
\draw [very thick] (a) node{$\bullet$};

\draw  [very thick] (1.5,-0.2)--(1.5,0.2);
\draw [very thick] (1.3,0)--(1.7,0);

\begin{scope}[xshift=3cm]
\coordinate (e) at (0,0);

\coordinate (f) at (30:1);
\coordinate (g) at (150:1);
\coordinate (h) at (0,-1);
\draw[dashed] (e) circle (1);
\draw [very thick] (e).. controls (1,0).. (g);
\draw [very thick] (e).. controls (-1,0).. (f);
\draw [very thick] (e) --(h);
\draw [very thick] (e) node{$\bullet$};

\draw [very thick] (e) node{$\bullet$};
\end{scope}
\end{tikzpicture}}
\newcommand{\Jacobi}{\begin{tikzpicture}[scale=0.6]
\useasboundingbox (-2,-1) rectangle (9,1);

\draw[dashed] (0,0) circle (1);

\coordinate (a) at (0,0);
\coordinate (b) at (0,-0.5);
\coordinate (c) at (60:1);
\coordinate (d) at (120:1);
\coordinate (e) at (0,-1);
\coordinate (f) at (220:1);

\draw [very thick] (e)-- (a)--(c) ;
\draw [very thick] (a) -- (d);
\draw [very thick] (f) -- (b);
\draw (a) node{$\bullet$};
\draw (b) node{$\bullet$};

\draw  [very thick] (1.5,-0.2)--(1.5,0.2);
\draw [very thick] (1.3,0)--(1.7,0);

\begin{scope}[xshift=3cm]
\draw[dashed] (0,0) circle (1);

\coordinate (a) at (0,0);
\coordinate (b) at (60:0.5);
\coordinate (c) at (60:1);
\coordinate (d) at (120:1);
\coordinate (e) at (0,-1);
\coordinate (f) at (220:1);

\draw [very thick] (e) -- (a)--(c) ;
\draw [very thick] (a) -- (d);
\draw [very thick] (f) .. controls (1,0.25) .. (b);

\draw (a) node{$\bullet$};
\draw (b) node{$\bullet$};

\draw  [very thick] (1.5,-0.2)--(1.5,0.2);
\draw [very thick] (1.3,0)--(1.7,0);
\end{scope}
\begin{scope}[xshift=6cm]
\draw[dashed] (0,0) circle (1);

\coordinate (a) at (0,0);
\coordinate (b) at (120:0.5);
\coordinate (c) at (60:1);
\coordinate (d) at (120:1);
\coordinate (e) at (0,-1);
\coordinate (f) at (220:1);

\draw [very thick] (e) -- (a)--(c) ;
\draw [very thick] (a) -- (d);
\draw [very thick] (f) .. controls (1.4,0.25) .. (b);

\draw (a) node{$\bullet$};
\draw (b) node{$\bullet$};

\end{scope}

\end{tikzpicture}}
\newcommand{\fourgraphs}{\begin{tikzpicture}
\useasboundingbox (-2,-1) rectangle (11,2);
\draw[very thick] (-0.88,-0.5) -- (0.88,-0.5) node[midway,sloped]{$>$} ;
\draw (0,-1) node{$T_1$};
\draw[very thick] (0.88,-0.5) -- (0,0) node[midway,sloped]{$<$};
\draw[very thick] (0,0) -- (-0.88,-0.5) node[midway,sloped]{$<$} ;
\draw[very thick] (0,1) -- (-0.88,-0.5) node[midway,sloped]{$<$} ;
\draw[very thick] (0,1) -- (0,0) node[midway,sloped]{$>$};
\draw[very thick] (0,1) --(0.88,-0.5) node[midway,sloped]{$>$} ;
\draw[very thick] (0,1) node[above]{\scriptsize$1$};
\draw[very thick] (-0.88,-0.5)++(-0.15,-0.15) node{\scriptsize$2$};
\draw[very thick] (0.88,-0.5)++(0.15,-0.15) node{\scriptsize$3$};
\draw[very thick] (0,0)++(0.15,0.15) node{\scriptsize$4$};

%\draw (-1.5,0) node[below]{$T_1$}

\draw[very thick] (2.12,-0.5) -- (3.88,-0.5) node[midway,sloped]{$<$};
\draw (3,-1) node{$T_2$};
\draw[very thick] (3.88,-0.5) -- (3,0) node[midway,sloped]{$>$};
\draw[very thick] (3,0) -- (2.12,-0.5) node[midway,sloped]{$>$} ;
\draw[very thick] (3,1) -- (2.12,-0.5) node[midway,sloped]{$>$} ;
\draw[very thick] (3,1) -- (3,0) node[midway,sloped]{$<$};
\draw[very thick] (3,1) --(3.88,-0.5) node[midway,sloped]{$<$} ;

\begin{scope}[xshift=3cm]
\draw[very thick] (0,1) node[above]{\scriptsize$1$};
\draw[very thick] (-0.88,-0.5)++(-0.15,-0.15) node{\scriptsize$2$};
\draw[very thick] (0.88,-0.5)++(0.15,-0.15) node{\scriptsize$3$};
\draw[very thick] (0,0)++(0.15,0.15) node{\scriptsize$4$};
\end{scope}

\draw[very thick] (5,-0.5) -- (6.76,-0.5) node[midway,sloped]{$>$};
\draw (5.88,-1) node{$W_1$};
\draw[very thick] (5,1) -- (6.76,1) node[midway,sloped]{$>$};
\draw[very thick] (5,-0.5) .. controls (5.5,0.2) ..(5,1) node[midway,sloped]{$<$};
\draw[very thick] (5,-0.5) .. controls (4.5,0.2) ..(5,1) node[midway,sloped]{$<$};
\draw[very thick] (6.76,-0.5) .. controls (7.26,0.2) ..(6.76,1) node[midway,sloped]{$>$};
\draw[very thick] (6.76,-0.5) .. controls (6.26,0.2) ..(6.76,1) node[midway,sloped]{$<$};

\draw[very thick] (5,-0.5)++(-0.15,-0.15) node{\scriptsize$2$};
\draw[very thick] (5,1)++(-0.15,0.15) node{\scriptsize$1$};
\draw[very thick] (6.76,-0.5)++(0.15,-0.15) node{\scriptsize$4$};
\draw[very thick] (6.76,1)++(0.15,0.15) node{\scriptsize$3$};

\draw[very thick] (8,-0.5) -- (9.76,-0.5) node[midway,sloped]{$<$};
\draw (8.88,-1) node{$W_2$};
\draw[very thick] (8,1) -- (9.76,1) node[midway,sloped]{$<$};
\draw[very thick] (8,-0.5) .. controls (8.5,0.2) ..(8,1) node[midway,sloped]{$>$};
\draw[very thick] (8,-0.5) .. controls (7.5,0.2) ..(8,1) node[midway,sloped]{$>$};
\draw[very thick] (9.76,-0.5) .. controls (10.26,0.2) ..(9.76,1) node[midway,sloped]{$<$};
\draw[very thick] (9.76,-0.5) .. controls (9.26,0.2) ..(9.76,1) node[midway,sloped]{$>$};

\begin{scope}[xshift=3cm]
\draw[very thick] (5,-0.5)++(-0.15,-0.15) node{\scriptsize$2$};
\draw[very thick] (5,1)++(-0.15,0.15) node{\scriptsize$1$};
\draw[very thick] (6.76,-0.5)++(0.15,-0.15) node{\scriptsize$4$};
\draw[very thick] (6.76,1)++(0.15,0.15) node{\scriptsize$3$};
\end{scope}

\end{tikzpicture}}
\newcommand{\Tunnew}{\begin{tikzpicture}[scale=1.5]
\useasboundingbox (-1,-0.7) rectangle (1,1.2);
\draw[very thick] (-0.88,-0.5) -- (0.88,-0.5) node[midway,sloped]{$>$} node[near start, below,scale=0.7]{\scriptsize$e_1$} node[near end, below,scale=0.7]{\scriptsize$e_2$};
\draw[very thick] (0.88,-0.5) -- (0,0) node[midway,sloped]{$<$} node[near end, below,scale=0.7]{\scriptsize$d_2$};
\draw (0.55,-0.4) node[scale=0.7]{\scriptsize$d_1$};
\draw[very thick] (0,0) -- (-0.88,-0.5) node[midway,sloped]{$<$} node[near start, below,scale=0.7]{\scriptsize$a_1$} ;
\draw (-0.55,-0.4) node[scale=0.7]{\scriptsize$a_2$};
\draw[very thick] (0,1) -- (-0.88,-0.5) node[midway,sloped]{$<$} ;
\draw (-0.3,0.75) node[scale=0.7]{\scriptsize$b_1$};
\draw (-0.7,0) node[scale=0.7]{\scriptsize$b_2$};
\draw[very thick] (0,1) -- (0,0) node[midway,sloped]{$>$};
\draw (0.08,0.75) node[scale=0.7]{\scriptsize$f_1$};
\draw (0.08,0.25) node[scale=0.7]{\scriptsize$f_2$};
\draw[very thick] (0,1) --(0.88,-0.5) node[midway,sloped]{$>$}  ;
\draw (0.3,0.75) node[scale=0.7]{\scriptsize$c_1$};
\draw (0.7,0) node[scale=0.7]{\scriptsize$c_2$};
\draw (-0.88,-0.7) node[scale=0.7]{\scriptsize$v_2$};
\draw (0.88,-0.7) node[scale=0.7]{\scriptsize$v_3$};
\draw (0,1.2) node[scale=0.7]{\scriptsize$v_1$};
\draw (-0.1,0.1) node[scale=0.7]{\scriptsize$v_4$};
\draw (-0.88,-0.5) node{$\bullet$};
\draw (0.88,-0.5) node{$\bullet$};
\draw (0,0) node{$\bullet$};
\draw (0,1) node{$\bullet$};
\end{tikzpicture}}
\newcommand{\typeoneandtwo}{\begin{tikzpicture}
\useasboundingbox (-1,-1.1) rectangle (1,1.1);
\draw[very thick] (-0.88,-0.5) -- (0.88,-0.5) node[midway,sloped]{$>$};
\draw[very thick] (0.88,-0.5) -- (0,0) node[midway,sloped]{$<$};
\draw[very thick] (0,0) -- (-0.88,-0.5) node[midway,sloped]{$<$} ;
\draw[very thick] (0,1) -- (-0.88,-0.5) node[midway,sloped]{$<$} ;
\draw[very thick] (0,1) -- (0,0) node[midway,sloped]{$>$};
\draw[very thick] (0,1) --(0.88,-0.5) node[midway,sloped]{$>$} ;
\draw [dashed] (0,0.75) circle (0.4);
\draw [dashed] (0,-0.25) circle (0.5);
\draw (1,0.75) node{\scriptsize Type $2$};
\draw (1,-0.75) node{\scriptsize Type $1$};
\end{tikzpicture}}
\newcommand{\doubleedge}{\begin{tikzpicture}[scale=1]
\useasboundingbox (4,-0.7) rectangle (7.76,1.2);

\draw[very thick] (5,-0.5) -- (6.76,-0.5) node[midway,sloped]{$>$} node[midway, below]{\scriptsize$e_{24}$};
\draw[very thick] (5,1) -- (6.76,1) node[midway,sloped]{$>$} node[midway,above]{\scriptsize$e_{13}$};
\draw[very thick] (5,-0.5) .. controls (5.5,0.2) ..(5,1) node[midway,sloped]{$<$};
\draw[very thick] (5,-0.5) .. controls (4.5,0.2) ..(5,1) node[midway,sloped]{$<$};
\draw[very thick] (6.76,-0.5) .. controls (7.26,0.2) ..(6.76,1) node[midway,sloped]{$>$};
\draw (7.5,0.25) node{\scriptsize$e_{43}$};
\draw[very thick] (6.76,-0.5) .. controls (6.26,0.2) ..(6.76,1) node[midway,sloped]{$<$};
\draw (6.05,0.25) node{\scriptsize$e_{34}$};
\draw (5,-0.7) node {\scriptsize$2$};
\draw (5,1.3) node{\scriptsize$1$};
\draw (6.76,-0.7) node {\scriptsize$4$};
\draw (6.76,1.3) node {\scriptsize$3$};
\draw (5,-0.5) node{$\bullet$};
\draw (6.76,-0.5) node{$\bullet$};
\draw (5,1) node{$\bullet$};
\draw (6.76,1) node{$\bullet$};
\end{tikzpicture}}
\newcommand{\Wunnew}{\begin{tikzpicture}[scale=1.5]

\useasboundingbox (4.75,-0.7) rectangle (7,1.2);

\draw[very thick] (5,-0.5) -- (6.76,-0.5) node[midway,sloped]{$>$} node[near start, below,scale=0.7]{\scriptsize $e_1$} node[near end, below,scale=0.7]{\scriptsize$e_2$};
\draw[very thick] (5,1) -- (6.76,1) node[midway,sloped]{$>$} node[near start, above,scale=0.7]{\scriptsize$f_1$} node[near end,above,scale=0.7]{\scriptsize$f_2$};
\draw[very thick] (5,-0.5) .. controls (5.5,0.2) ..(5,1) node[midway,sloped]{$<$};
\draw (5.5,0) node[scale=0.7]{\scriptsize$c_2$};
\draw (5.5,0.5) node[scale=0.7]{\scriptsize$c_1$};
\draw[very thick] (5,-0.5) .. controls (4.5,0.2) ..(5,1) node[midway,sloped]{$<$};
\draw (4.5,0) node[scale=0.7]{\scriptsize$b_2$};
\draw (4.5,0.5) node[scale=0.7]{\scriptsize$b_1$};
\draw[very thick] (6.76,-0.5) .. controls (7.26,0.2) ..(6.76,1) node[midway,sloped]{$>$};
\draw (7.26,0) node[scale=0.7]{\scriptsize$d_1$};
\draw (7.26,0.5) node[scale=0.7]{\scriptsize$d_2$};
\draw[very thick] (6.76,-0.5) .. controls (6.26,0.2) ..(6.76,1) node[midway,sloped]{$<$};
\draw (6.26,0) node[scale=0.7]{\scriptsize$a_2$};
\draw (6.26,0.5) node[scale=0.7]{\scriptsize$a_1$};
\draw (5,-0.7) node[scale=0.7]{\scriptsize$v_2$};
\draw (5,1.2) node[scale=0.7]{\scriptsize$v_1$};
\draw (6.76,-0.7) node[scale=0.7]{\scriptsize$v_4$};
\draw (6.76,1.2) node[scale=0.7]{\scriptsize$v_3$};
\draw (5,-0.5) node{$\bullet$};
\draw (6.76,-0.5) node{$\bullet$};
\draw (5,1) node{$\bullet$};
\draw (6.76,1) node{$\bullet$};
\end{tikzpicture}}
\newcommand{\Wunbisnew}
{\begin{tikzpicture}[scale=1.5]
\useasboundingbox (4.75,-0.7) rectangle (7,1.2);

\draw[very thick] (5,-0.5) -- (6.76,-0.5) node[midway,sloped]{$>$} node[near start, below,scale=0.7]{\scriptsize$f_1$} node[near end, below,scale=0.7]{\scriptsize$f_2$};
\draw[very thick] (5,1) -- (6.76,1) node[midway,sloped]{$>$} node[near start, above,scale=0.7]{\scriptsize$e_1$} node[near end,above,scale=0.7]{\scriptsize$e_2$};
\draw[very thick] (5,-0.5) .. controls (5.5,0.2) ..(5,1) node[midway,sloped]{$<$};
\draw (5.5,0) node[scale=0.7]{\scriptsize$b_2$};
\draw (5.5,0.5) node[scale=0.7]{\scriptsize$b_1$};
\draw[very thick] (5,-0.5) .. controls (4.5,0.2) ..(5,1) node[midway,sloped]{$<$};
\draw (4.5,0) node[scale=0.7]{\scriptsize$c_2$};
\draw (4.5,0.5) node[scale=0.7]{\scriptsize$c_1$};
\draw[very thick] (6.76,-0.5) .. controls (7.26,0.2) ..(6.76,1) node[midway,sloped]{$>$};
\draw (7.26,0) node[scale=0.7]{\scriptsize$d_1$};
\draw (7.26,0.5) node[scale=0.7]{\scriptsize$d_2$};
\draw[very thick] (6.76,-0.5) .. controls (6.26,0.2) ..(6.76,1) node[midway,sloped]{$<$};
\draw (6.26,0) node[scale=0.7]{\scriptsize$a_2$};
\draw (6.26,0.5) node[scale=0.7]{\scriptsize$a_1$};
\draw (5,-0.7) node[scale=0.7]{\scriptsize$v_2$};
\draw (5,1.2) node[scale=0.7]{\scriptsize$v_1$};
\draw (6.76,-0.7) node[scale=0.7]{\scriptsize$v_4$};
\draw (6.76,1.2) node[scale=0.7]{\scriptsize$v_3$};
\draw (5,-0.5) node{$\bullet$};
\draw (6.76,-0.5) node{$\bullet$};
\draw (5,1) node{$\bullet$};
\draw (6.76,1) node{$\bullet$};
\end{tikzpicture}}
\newcommand{\TunIHXnew}
{\begin{tikzpicture}[scale=1.5]
\useasboundingbox (-1,-0.7) rectangle (1,1.2);
\draw[very thick] (-0.88,-0.5) -- (0.88,-0.5) node[midway,sloped]{$>$} node[near start, below,scale=0.7]{\scriptsize$e_1$} node[near end, below,scale=0.7]{\scriptsize$e_2$};
\draw[very thick] (0.88,-0.5) -- (0,0) node[midway,sloped]{$<$} node[near end, below,scale=0.7]{\scriptsize$d_2$};
\draw (0.55,-0.4) node[scale=0.7]{\scriptsize$d_1$};
\draw[very thick] (0,0) -- (-0.88,-0.5) node[midway,sloped]{$<$} node[near start, below,scale=0.7]{\scriptsize$a_1$} ;
\draw (-0.55,-0.4) node[scale=0.7]{\scriptsize$a_2$};
\draw[very thick] (0,1) -- (-0.88,-0.5) node[midway,sloped]{$<$} ;
\draw (-0.3,0.75) node[scale=0.7]{\scriptsize$c_1$};
\draw (-0.7,0) node[scale=0.7]{\scriptsize$c_2$};
\draw[very thick] (0,1) -- (0,0) node[midway,sloped]{$>$};
\draw (0.08,0.75) node[scale=0.7]{\scriptsize$f_1$};
\draw (0.08,0.25) node[scale=0.7]{\scriptsize$f_2$};
\draw[very thick] (0,1) --(0.88,-0.5) node[midway,sloped]{$>$}  ;
\draw (0.3,0.75) node[scale=0.7]{\scriptsize$b_1$};
\draw (0.7,0) node[scale=0.7]{\scriptsize$b_2$};
\draw (-0.88,-0.7) node[scale=0.7]{\scriptsize$v_2$};
\draw (0.88,-0.7) node[scale=0.7]{\scriptsize$v_3$};
\draw (0,1.2) node[scale=0.7]{\scriptsize$v_1$};
\draw (-0.1,0.1) node[scale=0.7]{\scriptsize$v_4$};
\draw (-0.88,-0.5) node{$\bullet$};
\draw (0.88,-0.5) node{$\bullet$};
\draw (0,0) node{$\bullet$};
\draw (0,1) node{$\bullet$};
\end{tikzpicture}}
\newcommand{\Tunbisnew}{\begin{tikzpicture}[scale=1.5]
\useasboundingbox (-1,-0.7) rectangle (1,1.2);
\draw[very thick] (-0.88,-0.5) -- (0.88,-0.5) node[midway,sloped]{$>$} node[near start, below,scale=0.7]{\scriptsize$c_1$} node[near end, below,scale=0.7]{\scriptsize$c_2$};
\draw[very thick] (0.88,-0.5) -- (0,0) node[midway,sloped]{$<$} node[near end, below,scale=0.7]{\scriptsize$f_2$};
\draw (0.55,-0.4) node[scale=0.7]{\scriptsize$f_1$};
\draw[very thick] (0,0) -- (-0.88,-0.5) node[midway,sloped]{$<$} node[near start, below,scale=0.7]{\scriptsize$d_1$} ;
\draw (-0.55,-0.4) node[scale=0.7]{\scriptsize$d_2$};
\draw[very thick] (0,1) -- (-0.88,-0.5) node[midway,sloped]{$<$} ;
\draw (-0.3,0.75) node[scale=0.7]{\scriptsize$e_1$};
\draw (-0.7,0) node[scale=0.7]{\scriptsize$e_2$};
\draw[very thick] (0,1) -- (0,0) node[midway,sloped]{$>$};
\draw (0.08,0.75) node[scale=0.7]{\scriptsize$a_1$};
\draw (0.08,0.25) node[scale=0.7]{\scriptsize$a_2$};
\draw[very thick] (0,1) --(0.88,-0.5) node[midway,sloped]{$>$}  ;
\draw (0.3,0.75) node[scale=0.7]{\scriptsize$b_1$};
\draw (0.7,0) node[scale=0.7]{\scriptsize$b_2$};
\draw (-0.88,-0.7) node[scale=0.7]{\scriptsize$v_2$};
\draw (0.88,-0.7) node[scale=0.7]{\scriptsize$v_3$};
\draw (0,1.2) node[scale=0.7]{\scriptsize$v_1$};
\draw (-0.1,0.1) node[scale=0.7]{\scriptsize$v_4$};
\draw (-0.88,-0.5) node{$\bullet$};
\draw (0.88,-0.5) node{$\bullet$};
\draw (0,0) node{$\bullet$};
\draw (0,1) node{$\bullet$};
\end{tikzpicture}}
\newcommand{\TunIHXbisnew}{\begin{tikzpicture}[scale=1.5]
\useasboundingbox (-1,-0.7) rectangle (1,1.2);
\draw[very thick] (-0.88,-0.5) -- (0.88,-0.5) node[midway,sloped]{$>$} node[near start, below, scale=0.7]{\scriptsize$b_1$} node[near end, below,scale=0.7]{\scriptsize$b_2$};
\draw[very thick] (0.88,-0.5) -- (0,0) node[midway,sloped]{$<$} node[near end, below,scale=0.7]{\scriptsize$f_2$};
\draw (0.55,-0.4) node[scale=0.7]{\scriptsize$f_1$};
\draw[very thick] (0,0) -- (-0.88,-0.5) node[midway,sloped]{$<$} node[near start, below,scale=0.7]{\scriptsize$d_1$} ;
\draw (-0.55,-0.4) node[scale=0.7]{\scriptsize$d_2$};
\draw[very thick] (0,1) -- (-0.88,-0.5) node[midway,sloped]{$<$} ;
\draw (-0.3,0.75) node[scale=0.7]{\scriptsize$e_1$};
\draw (-0.7,0) node[scale=0.7]{\scriptsize$e_2$};
\draw[very thick] (0,1) -- (0,0) node[midway,sloped]{$>$};
\draw (0.08,0.75) node[scale=0.7]{\scriptsize$a_1$};
\draw (0.08,0.25) node[scale=0.7]{\scriptsize$a_2$};
\draw[very thick] (0,1) --(0.88,-0.5) node[midway,sloped]{$>$}  ;
\draw (0.3,0.75) node[scale=0.7]{\scriptsize$c_1$};
\draw (0.7,0) node[scale=0.7]{\scriptsize$c_2$};
\draw (-0.88,-0.7) node[scale=0.7]{\scriptsize$v_2$};
\draw (0.88,-0.7) node[scale=0.7]{\scriptsize$v_3$};
\draw (0,1.2) node[scale=0.7]{\scriptsize$v_1$};
\draw (-0.1,0.1) node[scale=0.7]{\scriptsize$v_4$};
\draw (-0.88,-0.5) node{$\bullet$};
\draw (0.88,-0.5) node{$\bullet$};
\draw (0,0) node{$\bullet$};
\draw (0,1) node{$\bullet$};
\end{tikzpicture}}
\begin{document}
 \title{A three-manifold invariant from graph configurations}
 \author{Yohan Mandin--Hublé}
 \maketitle

 \begin{abstract}
The logarithm of the Kontsevich-Kuperberg-Thurston invariant counts embeddings of connected trivalent graphs in an oriented rational homology sphere, using integrals on configuration spaces of points in the given manifold. It is a universal finite type invariant of oriented rational homology spheres. The exponential of this invariant is often called the perturbative expansion of the Chern-Simons theory. In this article, we give an independent original definition of the degree two part of the logarithm of the Kontsevich-Kuperberg-Thurston invariant appropriate for concrete computations. This article can also serve as an introduction to the general definition of the logarithm of the Kontsevich-Kuperberg-Thurston invariant.

\end{abstract}

\vspace{1em}
\noindent{\it 2020 Mathematics Subject Classification:} Primary 57K31; Secondary 57K16, 55R80, 81Q30.

\vspace{0.5em}
\noindent{\it Key Words and Phrases:} low-dimensional topology, invariants of three-dimensional manifolds, configuration spaces, finite type invariants, Chern-Simons theory.
 
 \tableofcontents

\section{Introduction}

All manifolds will be oriented. In this article, a \emph{rational homology sphere} is a $3$-dimensional smooth manifold with the same homology with rational coefficients as $S^3$. For a homology sphere $M$, the logarithm $\zkkt(M)$ of the Kontsevich-Kuperberg-Thurston invariant $\Zkkt(M)$ counts embeddings of connected trivalent graphs in $M$, using integrals on configuration spaces of points in $M$. The $\Zkkt$ invariant is a universal finite type invariant of oriented rational homology spheres. It is valued in a quotient of the graded $\R$-vector space formally generated by trivalent graphs up to automorphisms. See Definitions \ref{trivalent} and \ref{def_jac}. The degree of a trivalent graph is half the number of its vertices. Kontsevich introduced the $\Zkkt$ invariant in \cite{ko}. Kuperberg and Thurston showed its universality among finite type invariants of integer homology spheres in \cite{kt}. Lescop further studied it in \cite{lesbookzv2}. In this article, we give a new independent definition of the degree two part $(\zkkt)_2$ of $\zkkt$ appropriate for concrete computations. This article may also serve as an introduction to the definition of $\zkkt$, which can be found in \cite{lesbookzv2}. 

The Le-Murakami-Ohtsuki (LMO) invariant is another universal finite type invariant of rational homology spheres. See \cite{lmo}. Moussard showed in \cite{moussardAGT} that the KKT and the LMO invariant carry the same information on rational homology spheres. However, the precise relationship between the two invariants remains to be established. It is known that the degree two parts of the LMO invariant and the KKT invariant coincide on integer homology $3$-spheres. Bar-Natan and Lawrence computed the LMO invariant on lens spaces in \cite{bnl}, but the values of $(\zkkt)_2$ on lens spaces are still unknown. In Section \ref{sec_LMO_KKT}, we show that the values of $(\zkkt)_2$ on lens spaces completely determine the difference between $(z_{KKT})_2$ and $(\zlmo)_2$, the degree two part of the logarithm of the LMO invariant.\footnote{In fact, knowing the values of $(\zkkt)_2$ on the lens spaces $L(p,1)$ for all prime numbers $p$ would be sufficient.} See Proposition \ref{prop_LMO_KKT}. Lescop obtained a surgery formula for $(\zkkt)_2$ involving these values. See \cite[Theorem $7.1$]{lezsurg}. The simple definition contained in the article opens a way to a complete determination of $(\zkkt)_2$. 

The article is organized as follows. In Section \ref{sec_main}, we state Theorem \ref{thm_secondcount}, which is the main result of this paper. It asserts that a certain sum $\lambda_2$ of integrals over the configuration space of four distinct points in $M$ is an invariant of $M$. The forms to be integrated are associated with the four edge-oriented trivalent graphs of Figure \ref{graphs}. In Section \ref{sec_compare}, we recall Lescop's definition of $(\zkkt)_2$, see Theorem \ref{thm_Lcount}. We recover Theorem \ref{thm_Lcount} from our Theorem \ref{thm_secondcount} in Remark \ref{rque_Lsecond}. In this remark, we also show that the invariant $\lambda_2$ in Theorem \ref{thm_secondcount} determines $(\zkkt)_2$. In Section \ref{sec_compactification}, we provide some useful details about the compactifications of configuration spaces of points in $M$ used in these constructions. Section \ref{sec_invariance} is then devoted to the proof of Theorem \ref{thm_secondcount} up to a proposition proved in Section \ref{sec_one}. Section \ref{sec_app} gives more details about expected applications of our theorem.

\subsection{Main result} \label{sec_main}
 Let us introduce the objects and give the necessary definitions to state Theorem \ref{thm_secondcount}. Let $M$ be an oriented rational homology sphere. Let $\infty$ be a point in $M$. Let $\check{M}$ be $M\setminus\{\infty\}$. Let
$$\Delta_{\check{M}}=\{(x,x), x\in\check{M}\}$$
be the diagonal of $\check{M}^2$. Let
$$\check{C}_2(\check{M})=\check{M}^2\setminus\Delta_{\check{M}}$$ be the space of injections of two points into $\check{M}$. We work with a particular compactification of $\check{C}_2(\check{M})$, which we denote by $C_2(M)$ (see Definition \ref{def_c2_compact}). It is a smooth manifold with boundary and ridges obtained by a natural compactification process for spaces of injections of finite sets into $\check{M}$. See Section \ref{sec_compactification}. There is a natural inclusion of $U\check{M}$ into the boundary $\partial C_2(M)$ of $C_2(M)$, where $U\check{M}$ is the unit tangent bundle to $\check{M}$: for any immersion $\gamma \colon ]-1,1[ \to \check{M}$, the limit at $(t=0)$ of $((\gamma(0),\gamma(t)) \in \check{C}_2(\check{M}))_{t\in ]0,1[}$ exists in $C_2(M)$. It belongs to the part $U\check{M}$ of $\partial C_2(M)$. It is the direction of $\gamma^{\prime}(0)$ at
$\gamma(0)$.

In the special case when $M=S^3=\R^3\cup \{\infty\}$, we define the Gauss map
$$ 
\begin{array}{llcl}
 G_{S^3} :& \check{C}_2(\R^3)   & \rightarrow & S^2 \\
  &  (x,y) & \mapsto & \frac{y-x}{||y-x||}, 
\end{array}
$$
where $||.||$ is the usual Euclidean norm on $\R^3$ and $S^2$ is the unit sphere of $\R^3$. The map $G_{S^3}$ smoothly extends to $C_2(S^3)$. For a general $M$, we choose an identification between a neighbourhood of $\infty$ in $M$ and a neighbourhood of $\infty$ in $S^3$. Then $G_{S^3}$ induces a Gauss map $G_M$ on $\partial C_2(M)\setminus U\check{M}$. See Definition \ref{def_gauss} and the discussion above it. Using the map $G_M$, we now define special closed two-forms on $C_2(M)$ as in \cite[Definition 3.11]{lesbookzv2}.
\begin{defi}\label{def_propform}
A \emph{propagating form} of $M$ is a closed two-form $\omega$ on $C_2(M)$ such that:
$$\omega_{|\partial C_2(M) \setminus U\check{M}}=G_M^*(\omega_{S^2}),$$
for a closed two-form $\omega_{S^2}$ on $S^2$ with
$$\int_{S^2}\omega_{S^2} =1.$$
\end{defi}
As $G_{S^3}$ is globally defined on $C_2(S^3)$, a typical example of a propagating form of $S^3$ is $G_{S^3}^*(\omega_{S^2})$. Propagating forms exist for any rational homology sphere for homological reasons. See \cite[Chapter $3$, Section $3$]{lesbookzv2} for a proof.

\begin{figure}
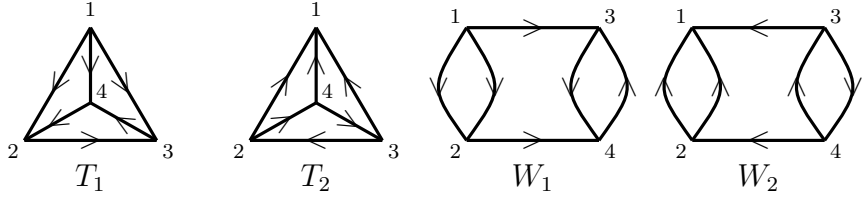

\centering
    \fourgraphs
    \caption{Four graphs.}
    \label{graphs}
\end{figure}

Let $\check{C}_4(\check{M})$ be the space of injections of $\{1,2,3,4\}$ into $\check{M}$. We orient it as a subset of $M^4$, which is oriented as an ordered product of oriented manifolds. Consider the four edge-oriented trivalent graphs with four vertices of Figure $\ref{graphs}$. Let $\Gamma$ be in $\{T_1,T_2,W_1,W_2\}$. Let $E(\Gamma)$ be the set of edges of $\Gamma$. For a pair $\{u,v\}$ of distinct elements of $\{1,2,3,4\}$, we let $e_{uv}(\Gamma)\subset E(\Gamma)$ be the set of edges of $\Gamma$ that go from $u$ to $v$. When the cardinality of $e_{uv}(\Gamma)$ is one and the context is clear, we will simply write $e_{uv}$ for the edge of $\Gamma$ going from $u$ to $v$. We now define a map $p_e$ from $\check{C}_4(\check{M})$ to $\check{C}_2(\check{M})$ for each $e\in E(\Gamma)$.
\begin{defi}\label{def_proj}
Let $\{u,v\}$ be a pair of distinct elements of $\{1,2,3,4\}$. Let $e$ be in $e_{uv}(\Gamma)$ and let $c\colon \{1,2,3,4\} \rightarrow \check{M}$ be an injection. We define:
$$p_e(c)=(c(u),c(v)).$$
\end{defi}
In the next definition, we introduce the above-mentioned integrals over $\check{C}_4(\check{M})$. When $X$ is a set of cardinality $6$, we define $\mathfrak{S}_6(X)$ to be the set of bijections from $X$ to $\pert$.
\begin{defi}\label{def_int}
 Let $j$ be in $\mathfrak{S}_6(E(\Gamma))$. For a family $(\omega_i)_{i\in\pert}$ of six propagating forms of $M$, we define the following integral:
$$I(M,(\omega_i)_i,(\Gamma,j))= \int_{\check{C}_4(\check{M})}  \bigwedge_{e \in E(\Gamma)} p_e^*(\omega_{j(e)}).$$
\end{defi}
These integrals converge because the maps $p_e$ smoothly extend to a natural compactification $C_4(M)$ of $\check{C}_4(\check{M})$. See the discussion at the end of Section \ref{sec_compactification}. We can now state our main result on the topological invariance of a count of configurations of the four graphs of Figure \ref{graphs} in $M$.
\begin{thm}\label{thm_secondcount}
Let $(\omega_i)_{i\in\pert}$ be a family of six propagating forms of $M$. The real number
$$\lambda_2(M,(\omega_i)_i)=\frac{1}{2^4 \times 6!}\left(\frac{1}{3}\sum_{\substack{\Gamma \in\{T_1,T_2\}\\ j \in \mathfrak{S}_6(E(\Gamma))}}I(M,(\omega_i)_i,(\Gamma,j))+\sum_{\substack{\Gamma \in\{W_1,W_2\}\\ j \in \mathfrak{S}_6(E(\Gamma))}} I(M,(\omega_i)_i,(\Gamma,j))\right)$$
does not depend on the chosen family $(\omega_i)_{i\in\pert}$. It only depends on $M$ up to orientation-preserving diffeomorphism. We denote it by $\lambda_2(M)$.
\end{thm}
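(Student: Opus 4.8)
The standard strategy for proving topological invariance of configuration-space integrals is to show that $\lambda_2$ does not change under: (1) a continuous deformation of any single propagating form $\omega_i$, and (2) a change of the chosen identification near $\infty$ and the auxiliary data used to build $C_2(M)$. Since the space of propagating forms is affine (the difference of two propagating forms restricts to an exact form on the boundary, indeed to $G_M^\ast$ of an exact form on $S^2$), it suffices to treat (1) by computing $\frac{d}{ds}\lambda_2(M,(\omega_i(s))_i)$ along a smooth path. Writing $\omega_i(1)-\omega_i(0)=d\eta_i$ with $\eta_i$ a one-form on $C_2(M)$ whose restriction to $\partial C_2(M)\setminus U\check M$ is pulled back from $S^2$, one expands the difference $I(M,(\omega_i(1))_i,(\Gamma,j))-I(M,(\omega_i(0))_i,(\Gamma,j))$ as a telescoping sum of integrals of $d$ of a product of the $p_e^\ast$-forms with one $p_e^\ast\eta_{j(e)}$ inserted. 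By Stokes' theorem on the compactified space $C_4(M)$, each such term becomes an integral over $\partial C_4(M)$.

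First I would recall the stratification of $\partial C_4(M)$ from Section~\ref{sec_compactification}: the codimension-one faces are indexed either by a subset $A\subseteq\{1,2,3,4\}$ with $|A|\ge 2$ whose points collide (in $\check M$, yielding a face fibered over $\check C_{|A^c|+1}(\check M)$ with fiber a configuration space of $|A|$ points in a tangent space, up to translation and scaling), or by the "point at infinity" faces where some points escape to $\infty$. The heart of the proof is the face-by-face cancellation: for each boundary face one shows the corresponding integral either vanishes for dimension/degree reasons, or cancels against the contribution of another face, possibly coming from a different graph $\Gamma$ in the family $\{T_1,T_2,W_1,W_2\}$ or from a different labelling $j$. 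This is exactly where the particular coefficients $\tfrac13$ for the $T$-graphs and $1$ for the $W$-graphs, and the symmetrization over all $j\in\mathfrak S_6(E(\Gamma))$, are forced: the collision of two points $u,v$ along an edge $e_{uv}$ produces an "anomaly" type contribution governed by the Gauss map $G_M$, and the collision of three or more points produces integrals that vanish by the Jacobi/IHX-type relations illustrated in the figures (\texttt{antisym}, \texttt{Jacobi}), or by a parity argument. I would organize the faces into: (a) faces where exactly two points collide and the two collapsed vertices are joined by a single edge — these give the "self-linking/anomaly" terms, which cancel among the four graphs because of the way $T_1,T_2$ (resp. $W_1,W_2$) are related by reversing all edge orientations; (b) faces where two points collide but are joined by a double edge (relevant for $W_1,W_2$) — handled by an explicit two-form computation on $C_2(M)$, using $\int_{S^2}\omega_{S^2}=1$; (c) faces where $\ge 3$ points collide — vanish by IHX/antisymmetry; (d) the faces at $\infty$ — vanish because the propagating forms are standard (pulled back via $G_M$) there and one gets integrals of pullbacks of forms on low-dimensional spheres, killed by degree count, together with the fact that $M$ is a rational homology sphere so there is no extra cohomology.

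The independence from the identification near $\infty$ and from the compactification data is then a separate, more routine argument: any two choices are connected by a path, and the same Stokes argument (or a direct cobordism between the two compactified $C_4$'s) shows $\lambda_2$ is unchanged; invariance under orientation-preserving diffeomorphism is immediate since the construction is natural in $M$ once one observes that a diffeomorphism carries propagating forms to propagating forms. I would defer the most delicate computation — the precise evaluation of the codimension-one boundary contributions of types (a)--(b) and the verification that they sum to zero with the given coefficients — to the separate proposition referenced as being "proved in Section~\ref{sec_one}", and in the main proof only reduce everything to that statement.

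\textbf{Main obstacle.} The crux, and the step I expect to be hardest, is the bookkeeping of the "two points collide along a single edge" faces: here the integral over the face factors through a section of $U\check M\hookrightarrow\partial C_2(M)$, producing a term that is \emph{a priori} a nontrivial integral over $\check M$ (or over a configuration of the remaining points) of a product of propagating forms pulled back via the $p_e$, weighted by the integral of $\omega_{S^2}$ over the fiber $S^2$. One must show that, after summing over all labellings $j$ and over the four graphs with the prescribed coefficients $\tfrac13,\tfrac13,1,1$, all these terms cancel in pairs. Getting the combinatorics of the edge-orientations and the symmetry factor $2^4\times 6!$ exactly right — in particular checking that the anomaly contribution of $T_1$ is cancelled by that of $T_2$ (the orientation-reversed graph) and similarly $W_1\leftrightarrow W_2$, and that the "internal" collisions within each graph cancel via IHX — is the real content; the convergence of the integrals and the extension of the $p_e$ to $C_4(M)$ are, by contrast, taken as known from Section~\ref{sec_compactification}.
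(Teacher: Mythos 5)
Your overall strategy --- interpolate between the two families of propagating forms, apply Stokes on $[0,1]\times C_4(M)$, classify the open codimension-one faces of $C_4(M)$, kill the faces at infinity by degree counting, and defer the hardest boundary cancellation to the proposition proved in Section \ref{sec_one} --- is the paper's strategy. But the cancellation scheme you propose for the remaining faces misassigns the mechanisms in a way that would fail if carried out. The faces where two points collide along a single edge do not cancel in pairs, and they are not cancelled by the $T_1\leftrightarrow T_2$, $W_1\leftrightarrow W_2$ edge-reversal symmetry: reversing all edges only identifies the $T_2,W_2$ face integrals with $\iota$-twisted versions of the $T_1,W_1$ ones (Lemma \ref{lem_reverse}), with no sign appearing. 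What actually kills these one-edge faces is the Jacobi/IHX relation, which mixes the two graph types: after collapsing the contracted edge, each collapsed labelled graph has exactly three preimages, two labelled tetrahedra and one labelled double-theta, and the three contributions cancel with weights $1,1,2$ (Proposition \ref{prop_IHX}, Lemmas \ref{lem_preimage1}--\ref{lem_IHXd}). This three-term cancellation is precisely what forces the coefficients $\tfrac13$ and $1$ (the weights $1$ and $2$ divided by the $3$, resp.\ $2$, edge-orientation-preserving automorphisms of $T_1$, resp.\ $W_1$), so attributing these coefficients to an anomaly-type cancellation misses the real content, and your claim that the ``internal'' collisions cancel via IHX within each graph has no counterpart: IHX never operates within a single graph here.

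Symmetrically, the roles you assign to the other faces are swapped relative to what actually works. The all-edge-reversal parity argument is what handles the anomalous face where all four points collide (the antipodal map of the fiber $S^8$ is orientation-reversing, so the contribution of $\Gamma$ cancels against that of $\mathcal{I}(\Gamma)$; Lemma \ref{lem_anomalous}) --- this is the only face where edge reversal produces a sign. Faces where exactly three points collide are not killed by IHX or antisymmetry: for $W_1,W_2$ they vanish for dimension reasons (Lemma \ref{lem_degenerateW}), and for $T_1,T_2$ (the triangular faces) they cancel in pairs by exchanging the labels of two edges of the collapsing triangle under an orientation-reversing diffeomorphism of the face (Lemma \ref{lem_triangle}); likewise the double-edge faces of $W_1,W_2$ cancel by swapping the labels of the two parallel edges (Lemmas \ref{lem_doubleedge1}--\ref{lem_doubleedge2_d}), with no use of $\int_{S^2}\omega_{S^2}=1$. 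Since you defer the decisive step to Section \ref{sec_one} but predict that it asserts pairwise cancellations driven by edge reversal, the deferral does not close the gap: the statement you would need there is the three-term IHX cancellation described above.
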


The invariant $(\zkkt)_2$ is valued in the one-dimensional real vector space generated by $\left[\tetraeq\right]$. Let us define a numerical invariant $\tilde{\lambda}_2$ of rational homology spheres by the formula $$(\zkkt)_2 = \tilde{\lambda}_2 \left[\tetraeq\right].$$ We will prove that $\tilde{\lambda_2}=\lambda_2$. See Remark \ref{rque_Lsecond}. Our new definition should be more practical for computing $\tilde{\lambda_2}$ on specific manifolds. See Remark \ref{rque_simplier}. To compute integrals on configuration spaces as in Definition \ref{def_int}, one can try to discretize them. This amounts to replacing the integrals by a count of signed intersection points of transversely intersecting propagating chains defined as follows (see \cite[Chapter $11$]{lesbookzv2}). A \emph{propagating chain} of $M$ is a relative four-cycle $P$ of $(C_2(M),\partial C_2(M))$ with rational coefficients, transverse to $\partial C_2(M)$, and such that:
$$P\cap (\partial C_2(M) \setminus U\check{M}) = G^{-1}_M(\{a\}),$$
where $a$ is a point in $S^2$. A typical example of a propagating chain of $S^3$ is $G_{S^3}^{-1}(\{a\})$. This propagating chain has a simple geometric interpretation: a pair $(x,y)$ of distinct points in $\R^3$ belongs to $G_{S^3}^{-1}(\{a\})$ if and only if $y$ stands on the half-line starting at x and directed by $a$. Propagating chains are dual to propagating forms. They exist for any rational homology sphere because the associated configuration space $C_2(M)$ has the same rational homology as the sphere $S^2$. 
For concrete computations, one can choose a set of six propagating chains $(P_i)_{i\in \pert}$ and replace the integrals $I(M,(\omega_i)_i,(\Gamma,j))$ of Definition \ref{def_int} by the algebraic intersections of the $p_e^{-1}(P_{j(e)})$ for $e\in E(\Gamma)$ as in \cite[Lemma 11.7]{lesbookzv2}. These quantities make sense when the $(p_e^{-1}(P_{j(e)}))_{e\in E(\Gamma)}$ intersect transversally.

\begin{rque} \label{rque_propMorse}
Kuperberg and Lescop constructed specific propagating chains from the data of a Morse function on $\check{M}$ whose critical points are of index one or two, see \cite{lesHC}.\footnote{See also \cite{watanabeMorse} for an alternative construction by Watanabe.} Let $\phi$ be a Morse-Smale flow $\phi \colon \R\times\check{M} \to \check{M}$ associated to such a function. Let $(\A_i)_{i \in \genre}$ denote the collection of the ascending manifolds of the index 1 critical points and let $(\B_j)_{j \in \genre}$ denote the collection of the descending manifolds of the index 2 critical points. A propagating chain $P_{\phi}$ associated to $\phi$ is the sum of the closure of a flow part $\{(x,\phi(t,x));t\in \left]0,\infty\right[\}$ and a combination of closures of $\bigl(\B_j \setminus (\A_i \cap \B_j)\bigr) \times \A_i$. Using these propagators, Lescop computed the degree one part of the $\zkkt$ invariant in terms of the combinatorics of a Heegaard diagram of $M$.

A special feature of our graphs $T_1,T_2,W_1,W_2$ is that they all contain a cycle of oriented edges \cycletetra or \cycletheta. Apply our formula with a propagating form $\omega_{\phi}$ supported in a small neighborhood of a Morse propagating chain $P_{\phi}$ as above. Then the four-point configurations in the support of $\wedge_{e \in E(\Gamma)} p_e^*(\omega_{\phi})$ must map the vertices of an oriented cycle to a small neighborhood of the union of the $\A_i \cap \B_j$. The existence of oriented cycles induces a similar strong constraint on the intersection associated to a generic family of six Morse propagating chains obtained by small perturbations of $P_{\phi}$. We are computing  $\lambda_2$ from Heegaard diagrams using these constraints in a work in progress.
\end{rque}

\subsection{The degree two part of \texorpdfstring{$\zkkt$}{the KKT invariant}} \label{sec_compare}

We review Lescop's definition of $\tilde{\lambda}_2$ in Theorem \ref{thm_Lcount}. In Remark \ref{rque_simplier}, we explain to what extent our result gives a definition of $\tilde{\lambda}_2$ simpler than the original one.

Let $\T$ (resp. $\W$) denote the set of graphs obtained from $T_1$ (resp. from $W_1$) by changing in any possible way the orientations of the edges. The following theorem is a consequence of Theorem $7.19$ of \cite{lesbookzv2}, which gives a definition of the whole $\zkkt$.

\begin{thm}\label{thm_Lcount}
Let $(\omega_i)_{i\in\pert}$ be a family of six propagating forms of $M$. The real number
$$\tilde{\lambda}_2\bigl(M,(\omega_i)_i\bigr)=\frac{1}{2^6 \times 6!}\left(\frac{1}{24}\sum_{\substack{\Gamma \in\T \\ j \in \mathfrak{S}_6(E(\Gamma))}}I\bigl(M,(\omega_i)_i,(\Gamma,j)\bigr)+\frac{1}{8}\sum_{\substack{\Gamma \in\W\\ j \in \mathfrak{S}_6(E(\Gamma))}} I\bigl(M,(\omega_i)_i,(\Gamma,j)\bigr)\right)$$
does not depend on the chosen family $(\omega_i)_{i\in\pert}$. It only depends on $M$ up to orientation-preserving diffeomorphism. We denote it by $\tilde{\lambda}_2(M)$.
\end{thm}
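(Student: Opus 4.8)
The plan is to prove Theorem \ref{thm_Lcount} by showing that $\tilde{\lambda}_2(M,(\omega_i)_i)$ equals $\lambda_2(M,(\omega_i)_i)$ of Theorem \ref{thm_secondcount}, from which all the claimed invariance properties follow immediately. The two formulas differ only in the way the contributions of $T$-type graphs and $W$-type graphs are weighted and in which edge-orientation representatives are summed over: Theorem \ref{thm_secondcount} sums over the two fixed representatives $T_1,T_2$ and $W_1,W_2$ with weights $\frac{1}{3}$ and $1$ and normalization $\frac{1}{2^4\cdot 6!}$, whereas Theorem \ref{thm_Lcount} sums over all edge-orientation variants in $\T$ and $\W$ with weights $\frac{1}{24}$ and $\frac{1}{8}$ and normalization $\frac{1}{2^6\cdot 6!}$. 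So the entire content is a combinatorial bookkeeping identity relating these two ways of organizing the same integrals.

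The first step is to understand how the integral $I(M,(\omega_i)_i,(\Gamma,j))$ depends on the edge-orientations of $\Gamma$. Reversing the orientation of a single edge $e$ replaces $p_e$ by $\iota\circ p_e$, where $\iota\colon \check{C}_2(\check{M})\to\check{C}_2(\check{M})$ is the swap $(x,y)\mapsto(y,x)$. Since $\iota^*(\omega_{S^2})$ and $\omega_{S^2}$ represent the same generator of $H^2(S^2)$ up to sign — in fact $\iota$ on $S^2$ via the Gauss map is the antipodal map, which is orientation-reversing on $S^2$ — reversing one edge multiplies the integrand's cohomology class by $-1$; but since we integrate a top form over the $12$-dimensional $\check C_4(\check M)$ and the six forms pair off, one must check that the total sign contributed by reversing all edges of a $T$-type graph (six edges) or a $W$-type graph (four edges) is $+1$. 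More precisely, I would argue that the integral $I(M,(\omega_i)_i,(\Gamma,j))$ depends on $\Gamma\in\T$ only through $\Gamma$'s image under the map to unoriented graphs, i.e. it is the same for all $24$ elements of $\T$ (the $T$-graph $T_1$ has $6$ edges, so $2^6=64$ orientations, but only $24$ inequivalent ones survive after accounting for the automorphism quotient and the sign considerations — this needs to be pinned down carefully, and likely the true count is that $\T$ has $2^6/(\text{something})$ elements with all integrals equal), and similarly for $\W$. Granting that the integrals are constant on $\T$ and on $\W$ respectively, with common values $I_T(j)$ and $I_W(j)$, the formula of Theorem \ref{thm_Lcount} becomes $\frac{1}{2^6\cdot 6!}\bigl(\frac{|\T|}{24}\sum_j I_T(j)+\frac{|\W|}{8}\sum_j I_W(j)\bigr)$, and matching this against $\frac{1}{2^4\cdot 6!}\bigl(\frac{1}{3}\sum_j I_{T_1}(j)+\sum_j I_{W_1}(j)\bigr)$ reduces to checking $\frac{|\T|}{24\cdot 2^6}=\frac{1}{3\cdot 2^4}$, i.e. $|\T|=32$, and $\frac{|\W|}{8\cdot 2^6}=\frac{1}{2^4}$, i.e. $|\W|=32$; these cardinalities should be read off directly from the definitions of $\T$ and $\W$ as edge-orientation orbits.

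The second step, once the numerical identity $\tilde{\lambda}_2(M,(\omega_i)_i)=\lambda_2(M,(\omega_i)_i)$ is established, is simply to invoke Theorem \ref{thm_secondcount}: independence of the family $(\omega_i)_i$ and invariance under orientation-preserving diffeomorphism are inherited verbatim. Alternatively — and this is presumably closer to how the author states it, since Theorem \ref{thm_Lcount} is attributed to Lescop's Theorem $7.19$ of \cite{lesbookzv2} — one can simply cite that $\tilde{\lambda}_2$ as defined here is, by unwinding the general definition of $(\zkkt)_2$ in \cite[Theorem 7.19]{lesbookzv2}, the coefficient of $\left[\tetraeq\right]$ in $(\zkkt)_2$, so that its invariance is part of the already-established invariance of the whole KKT invariant; in that reading, Theorem \ref{thm_Lcount} is not reproved from scratch at all but recorded as a specialization of a known result, and the only thing to verify is that the particular combinatorial normalization written here ($\frac{1}{24}$ on $\T$, $\frac{1}{8}$ on $\W$, overall $\frac{1}{2^6\cdot 6!}$) is the correct transcription of the general formula for the degree-two, two-loop part of $\zkkt$.

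The main obstacle I anticipate is the sign and symmetry-factor bookkeeping in the first step: correctly identifying the sets $\T$ and $\W$ and their cardinalities, verifying that the integral $I(M,(\omega_i)_i,(\Gamma,j))$ really is invariant under reversing all edge-orientations within each orbit (which hinges on the parity of the edge count and on the antipodal symmetry of $\omega_{S^2}$, and one should be slightly careful because $\omega_{S^2}$ need not itself be antipodally symmetric, only cohomologous to its pullback), and confirming that the automorphism groups of $T_1$ and $W_1$ as edge-oriented graphs produce exactly the factors $\frac{1}{3}$ versus $1$ in Theorem \ref{thm_secondcount} and $\frac{1}{24}$ versus $\frac{1}{8}$ here. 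The analytic input — convergence of the integrals and smoothness of the extensions of the $p_e$ to $C_4(M)$ — is not an obstacle, being granted by the discussion at the end of Section \ref{sec_compactification} and used identically in Theorem \ref{thm_secondcount}.
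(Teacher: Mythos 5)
Your overall strategy---deducing Theorem \ref{thm_Lcount} from Theorem \ref{thm_secondcount}---is indeed the paper's strategy (Remark \ref{rque_Lsecond}), but the mechanism you propose for matching the two formulas has a genuine gap. The central claim that $I(M,(\omega_i)_i,(\Gamma,j))$ depends on $\Gamma\in\T$ (or $\W$) only through the underlying unoriented graph is false for a fixed family of propagating forms: reversing an edge $e$ replaces $p_e^*(\omega_{j(e)})$ by $(\iota\circ p_e)^*(\omega_{j(e)})=p_e^*\bigl(\iota^*(\omega_{j(e)})\bigr)$, and $\iota^*(\omega_{j(e)})$ is \emph{not} a propagating form---rather $-\iota^*(\omega_{j(e)})$ is, because $G_M\circ\iota=\iota_{S^2}\circ G_M$ and the antipodal map has degree $-1$ on $S^2$. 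So at best $\iota^*\omega$ is cohomologous to $-\omega$, and even that does not give termwise equality (or termwise sign change) of the integrals: $C_4(M)$ has boundary, so integrals of cohomologous closed forms differ by boundary contributions, and controlling exactly such contributions is the whole content of the invariance proof; an individual $I(M,(\omega_i)_i,(\Gamma,j))$ is not determined by cohomology classes. Your cardinality bookkeeping also goes astray: $\T$ and $\W$ each have $2^6=64$ elements (six edges, numbered vertices, no identifications), not $24$ or $32$; the requirement $\lvert\T\rvert=32$ appears in your computation only because you matched against the $T_1$ and $W_1$ terms of $\lambda_2$ and dropped $T_2$ and $W_2$.

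The paper's actual proof is a short antisymmetrization argument, which is the idea missing from your attempt. Since $-\iota^*(\omega_i)$ is a propagating form, so is $\omega_i':=\frac{1}{2}(\omega_i-\iota^*(\omega_i))$. One plugs $(\omega_i')_i$ into the formula of Theorem \ref{thm_secondcount}: expanding each factor $p_e^*(\omega'_{j(e)})=\frac{1}{2}\bigl(p_e^*(\omega_{j(e)})-p_e^*(\iota^*(\omega_{j(e)}))\bigr)$ produces, with a factor $2^{-6}$, the signed sum over all $2^6$ edge-orientations of each graph (the sign for each reversed edge being exactly the one built into the orientation conventions for the edge-oriented graphs of $\T\cup\W$ in Lescop's formula, cf.\ Definition \ref{def_vo} and Lemma \ref{lem_match_or}); the contributions of $T_1$ and $T_2$ (resp.\ $W_1$ and $W_2$) coincide because they differ by reversing all six edges, and the constants match since $\frac{1}{2^4\times 6!}\cdot\frac{1}{3}\cdot\frac{2}{2^6}=\frac{1}{2^6\times 6!}\cdot\frac{1}{24}$ and $\frac{1}{2^4\times 6!}\cdot\frac{2}{2^6}=\frac{1}{2^6\times 6!}\cdot\frac{1}{8}$. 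Hence $\tilde{\lambda}_2(M,(\omega_i)_i)=\lambda_2\bigl(M,(\omega_i')_i\bigr)=\lambda_2(M)$, and both the independence statements and the equality $\tilde{\lambda}_2=\lambda_2$ follow at once from Theorem \ref{thm_secondcount}. Your fallback of simply citing Lescop's Theorem $7.19$ matches how the paper frames the statement, but the verification of the normalization that you leave open is precisely this antisymmetrization computation, not a count of equal integrals over $\T$ and $\W$.
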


\begin{rque}\label{rque_Lsecond}
 Theorem \ref{thm_Lcount} can be seen as a consequence of Theorem \ref{thm_secondcount}. Let $\iota \colon C_2(M) \rightarrow C_2(M)$ be the smooth extension of the involution of $\check{C}_2(\check{M})$ that exchanges the two coordinates. Note that if $\omega$ is a propagating form of $M$, then $\frac{1}{2}(\omega-\iota^*(\omega))$ is also a propagating form of $M$. Let $(\omega_i)_{i\in\pert}$ be a family of six propagating forms of $M$. We have the following equality
 $$\tilde{\lambda}_2\bigl(M,(\omega_i)_i\bigr)=\lambda_2\left(M,\left(\frac{1}{2}(\omega_i - \iota^*(\omega_i))\right)_{i\in\pert}\right) = \lambda_2(M).$$
 Thus Theorem \ref{thm_Lcount} is proved. Moreover, we get $\tilde{\lambda}_2(M)=\lambda_2(M)$.
\end{rque}

\begin{rque}\label{rque_simplier}

The formula of Theorem \ref{thm_Lcount} involves graphs that do not contain any cycle of oriented edges. In Remark \ref{rque_propMorse} we used the cycles of oriented edges in the graphs $T_1,T_2,W_1,W_2$ to get constraints on the supports of the forms to be integrated. We do not have such a constraint using Lescop's definition. Moreover, Lescop's definition involves more integrals than our formula.

Recall from the end of Section \ref{sec_main} that one can compute $\lambda_2$ using propagating chains instead of propagating forms. Both Lescop's definition and our formula allow one to use six distinct propagating chains. This freedom is necessary to get transversality, as we show in Section \ref{sec_transversality}.

\end{rque}

% We now discuss non-transversality issues in more details with two examples. We eventually prove the last claim of the previous remark. In the next example, we choose a specific single propagating chain of $S^3$. We show that Lescop's definition can't be applied with this single propagating chain because of non-transversality issues. In constrast, we apply Theorem \ref{thm_secondcount} with this single propagating chain to find $\lambda_2(S^3)=0$.

\begin{rque}\label{rque_para}
    With the additional choice of an asymptotically standard trivialisation $\tau \colon \check{M}\times \R^3 \rightarrow T\check{M}$ of the tangent bundle of $\check{M}$, one can define a Gauss map on $\partial C_2(M)$ (see \cite[Definition $3.6$, Proposition $3.7$]{lesbookzv2}). In fact, the invariant $\zkkt$ is first defined for a pair $(M,\tau)$. The behaviour of $\zkkt(M,\tau)$ when $\tau$ varies is determined by an element of the target space of the $\zkkt$ invariant called the \emph{beta anomaly}. See \cite[Chapter 10.6]{lesbookzv2}. The fact that the even degree part of the beta anomaly is zero implies that the even degree part of $\zkkt(M,\tau)$ does not depend on $\tau$. Alternatively, one can forget the parallelisation $\tau$ and construct the even degree part of $\zkkt$ as a function of $M$ in a direct way, as we did in Theorem \ref{thm_Lcount}.
\end{rque}

\subsection{Compactifications of configuration spaces} \label{sec_compactification}
Below, we give some details and explanations about the compactifications of configuration spaces of points in $M$ that we have used so far. We review the construction of $C_2(M)$ and give its important properties in Definition \ref{def_c2_compact} and Proposition \ref{bordcdeu}. In Definition \ref{def_gauss}, we describe the Gauss map $G_M$ on $\partial C_2(M)\setminus U\check{M}$. 

Let us start by some conventions regarding orientations. We orient any product of oriented manifolds, written from left to right, with the concatenation of the orientations of the manifolds in the same order. We orient the boundary of a manifold oriented by $o$ with the outward normal convention, that is with $o'$ such that $(n_\text{ext},o')=o$, where $n_\text{ext}$ is the outward normal. For an oriented manifold $N$, the same manifold equipped with the opposite orientation is denoted by $-N$.

Let $A$ be a smooth manifold. Let $TA$ be the tangent bundle to $A$. The \emph{unit tangent bundle} to $A$ is the fiber bundle over $A$ whose fiber over $x\in A$ is the quotient $(T_xA\setminus \{0\})/\R_+^*$, where $\R_+^*$ acts by scalar multiplication. We denote it by $UA$. If $B$ is a smooth submanifold of $A$, the \emph{unit normal bundle} to $B$ in $A$ is the bundle over $B$ defined as:
$$((TA/TB)\setminus \{0\})/\R_+^*,$$
where $\R_+^*$ acts by scalar multiplication. \emph{Blowing up} $B$ in $A$ in differential topology is an operation precisely defined in \cite[Definition $3.1$]{lesbookzv2}. The result of this operation is a smooth manifold with boundary. It is homeomorphic to the complement of an open tubular neighbourhood of $B$ in $A$ and is denoted by $Bl(A,B)$. It is equipped with a canonical smooth projection $Bl(A,B)\rightarrow A$. The restriction of this projection to the preimage of $A\setminus B$ is a diffeomorphism. The preimage of $B$ is naturally diffeomorphic to the unit normal bundle of $B$ in $A$. Therefore, informally speaking, blowing up $B$ in $A$ amounts to replacing $B$ with its unit normal bundle in $A$. 

Let $M$ be an oriented rational homology sphere. Let $\infty$ be a point in $M$. Let $\check{M}$ be $M\setminus \{\infty\}$. A first example of blow-up is the manifold $BL(M,\{\infty\})$. It is a compactification of $\check{M}$. As $\check{M}$ is the space of one-point configurations in $\check{M}$, we let $C_1(M)$ denote $BL(M,\{\infty\})$. In general, the blow-up operation is defined for manifolds with boundary and ridges and proper submanifolds transverse to the ridges. These notions allow one to perform a sequence of blow-ups on transversely intersecting submanifolds. This is used in the following construction of $C_2(M)$, in which a sequence of four blow-ups is performed.
\begin{defi}\label{def_c2_compact}
Let $C_2(M)$ be the oriented compact $6$-manifold with boundary and ridges obtained by first blowing up $\{(\infty,\infty)\}$ in $M\times M$, and then blowing up the closures of the pull-backs of $\check{M} \times \{\infty\}$, $\{\infty\} \times \check{M}$, and $\Delta_{\check{M}}=\{(x,x), x \in \check{M}\}$ by the natural smooth projection $p_{M^2} : Bl(M^2,(\infty,\infty)) \rightarrow M^2$.
\end{defi}

As a result of the last blow-up of this construction, the unit normal bundle to $\Delta_{\check{M}}$ in $\check{M}^2$ is a part of the boundary of $C_2(M)$. It is naturally identified with $U\check{M}$ in the following way:
$$
\begin{array}{lcl}
((T\check{M}^2/T\Delta_{\check{M}})\setminus\{0\})/\R_+^* &\overset{\sim}{\rightarrow}& U\check{M} \\
\left[(x,y)\right] & \mapsto & \left[y-x\right].
\end{array}
$$
We recall the following important properties of $C_2(M)$ without proofs. We refer to \cite[Chapter $3$, Section $2$]{lesbookzv2}.
\begin{prop} \label{bordcdeu}
The manifold $C_2(M)$ is a smooth compactification of $\check{C}_2(\check{M})$. It has the same rational homology as the $2$-sphere $S^2$. The boundary of $C_2(M)$ is:
$$\partial C_2(M) = p_{M^2}^{-1}(\{(\infty,\infty)\})\cup \left(S^2_\infty(M) \times \check{M}\right) \cup \left(-\check{M}\times S^2_\infty(M)\right) \cup U\check{M},$$
where $S^2_\infty(M)$ is the unit normal bundle to $\infty$ in $M$, oriented as the boundary of $C_1(M)$. Let $$\Bigl(G_{S^3}\colon (x,y) \mapsto \frac{y-x}{||y-x||}\Bigr)$$ be the \emph{Gauss map} from $\check{C}_2(\R^3)$ to $S^2$. Then $G_{S^3}$ has a smooth extension to $C_2(S^3)$.
\end{prop}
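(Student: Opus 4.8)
The plan is to prove the four assertions—smoothness and compactness, the boundary decomposition, the rational homology, and the smooth extension of the Gauss map—in that order, treating $C_2(M)$ throughout as the result of the four successive blow-ups of Definition~\ref{def_c2_compact}.

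First I would check that the iterated blow-up is legitimate and yields a smooth compact manifold with boundary and ridges. The only point requiring genuine verification is that, after blowing up $(\infty,\infty)$, the strict transforms of the three remaining centres $\overline{\check M\times\{\infty\}}=M\times\{\infty\}$, $\overline{\{\infty\}\times\check M}=\{\infty\}\times M$, and $\overline{\Delta_{\check M}}=\Delta_M$ become pairwise disjoint and transverse to the exceptional divisor, so that Lescop's blow-up operation (\cite[Definition~3.1]{lesbookzv2}) may be applied in sequence. This reduces to the observation that inside $T_{(\infty,\infty)}M^2$ the three tangent $3$-planes $T_\infty M\times\{0\}$, $\{0\}\times T_\infty M$ and $\{(v,v)\colon v\in T_\infty M\}$ meet pairwise only at the origin; hence their unit spheres are disjoint $2$-spheres in the exceptional divisor and the strict transforms separate. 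Compactness is inherited from $M^2$, and since every blow-up centre is disjoint from $\check C_2(\check M)=\check M^2\setminus\Delta_{\check M}$, the projection to $M^2$ restricts to a diffeomorphism over $\check C_2(\check M)$; thus $C_2(M)$ is a smooth compactification of $\check C_2(\check M)$.

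Next I would read off the boundary. Since $M^2$ is closed, every codimension-one stratum of $\partial C_2(M)$ is the open part of an exceptional divisor, a blow-up replacing its centre by the associated unit normal bundle and the later blow-ups only carving ridges along the earlier faces. The first blow-up contributes $p_{M^2}^{-1}(\{(\infty,\infty)\})$; the blow-up of $\{\infty\}\times\check M$ contributes its unit normal bundle $S^2_\infty(M)\times\check M$; the blow-up of $\check M\times\{\infty\}$ contributes $\check M\times S^2_\infty(M)$; and the diagonal blow-up contributes the unit normal bundle of $\Delta_{\check M}$, identified with $U\check M$ through $[(x,y)]\mapsto[y-x]$ as in the excerpt. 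The remaining work is orientation bookkeeping: orienting each face by the outward-normal convention of Section~\ref{sec_compactification} and comparing with the product orientation shows that the face $\check M\times S^2_\infty(M)$ receives the opposite orientation, which is the source of the sign in $-\check M\times S^2_\infty(M)$.

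For the rational homology I would first use that a compact manifold with boundary and ridges is homotopy equivalent to its interior via a collar, so $H_*(C_2(M);\Q)\cong H_*(\check C_2(\check M);\Q)$. Since $M$ is a rational homology sphere, a Mayer--Vietoris argument for the decomposition of $M$ into $\check M$ and a ball around $\infty$ shows that $\check M$ is $\Q$-acyclic, hence so are $\check M^2$ and $\Delta_{\check M}\cong\check M$. Applying the Thom isomorphism to the oriented rank-$3$ normal bundle of $\Delta_{\check M}$ in $\check M^2$ gives $H_k(\check M^2,\check C_2(\check M);\Q)\cong H_{k-3}(\Delta_{\check M};\Q)$, which is $\Q$ for $k=3$ and $0$ otherwise; feeding this into the long exact sequence of the pair $(\check M^2,\check C_2(\check M))$ isolates $H_2(\check C_2(\check M);\Q)\cong\Q$ with all other reduced groups vanishing, i.e.\ the rational homology of $S^2$. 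Finally, for $M=S^3$ I would write $G_{S^3}$ as the composition of the difference map $(x,y)\mapsto y-x$ into $\R^3\setminus\{0\}$ with radial projection onto $S^2$, and verify the smoothness of its extension face by face on $C_2(S^3)$: in the diagonal blow-up coordinates $\frac{y-x}{||y-x||}$ converges precisely to the class $[y-x]$, matching the identification of that face with $U\check M$, while near the infinity faces the chosen identification of a neighbourhood of $\infty$ with that of $\infty$ in $S^3$ makes the normalized difference extend smoothly as well. I expect the main obstacle to be exactly these two routine-but-delicate verifications—the orientation signs of the boundary faces and the smoothness of the normalized difference in the blow-up charts—since the conceptual content (transversality of the centres and the homology computation) is comparatively direct.
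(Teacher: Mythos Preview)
The paper does not actually prove Proposition~\ref{bordcdeu}: it states the result without proof and refers the reader to \cite[Chapter~3, Section~2]{lesbookzv2}. Your outline is a correct and complete sketch of such a proof, and it follows essentially the same route as the cited reference: verifying that the three strict transforms separate in the first exceptional divisor via the pairwise-transverse $3$-planes in $T_{(\infty,\infty)}M^2$, reading off the boundary as the union of the unit normal bundles of the four centres, computing the rational homology of $\check C_2(\check M)$ from the Thom isomorphism for the diagonal and the $\Q$-acyclicity of $\check M$, and checking the extension of $G_{S^3}$ chart by chart. The only places where care is genuinely required are exactly the two you flag---the sign on the $\check M\times S^2_\infty(M)$ face and the smoothness of the normalized difference in the blow-up charts near $(\infty,\infty)$ (where one must use inversion coordinates and track the behaviour of $\frac{y-x}{\|y-x\|}$ under $x\mapsto x/\|x\|^2$)---so there is nothing to correct.
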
 
We can now be more precise about the definition of the Gauss map $G_M$. Let us first study the case where $M=S^3$. We define a smooth map $p_\infty$ from $S^2_\infty(S^3)$ to $S^2$ as follows. For $v\in S^2_\infty(S^3)$, the element $p_\infty(v) \in S^2$ is the point such that $v$ is the limit in $C_1(S^3)$ of the sequence $(np_\infty(v))_{n\in\N}$. 
We use $p_\infty$ to describe $G_{S^3}$ on $\left(S^2_\infty(S^3) \times \R^3\right) \cup -\left(\R^3\times S^2_\infty(S^3)\right)$ as follows:
$$
 G_{S^3} = 
 \left\{
 \begin{array}{lcl}
 \iota_{S^2} \circ p_\infty  \circ p_1 &\text{ on }& S^2_\infty(S^3) \times \R^3 \\
 p_\infty \circ p_2 &\text{ on }& \R^3\times S^2_\infty(S^3)
\end{array}
\right.,$$
where $\iota_{S^2}$ is the antipodal involution of $S^2$, and $p_1$ and $p_2$ are the projections on the first and second factors respectively. Note that we have:
$$\partial C_2(M)\setminus U\check{M}=p_{M^2}^{-1}(\{(\infty,\infty)\})\cup \left(S^2_\infty(M) \times \check{M}\right) \cup \left(-\check{M}\times S^2_\infty(M)\right).$$
Let $\mathring{B}_{1,\infty}$ be the complement in $S^3$ of the closed ball of radius $1$ in $\R^3$. Choose an orientation-preserving diffeomorphism between an open neighbourhood of $\infty$ in $M$ and $\mathring{B}_{1,\infty}$. This diffeomorphism allows us to identify $p_{M^2}^{-1}(\{(\infty,\infty)\})$ with $p_{(S^3)^2}^{-1}(\{(\infty,\infty)\})$, and $S^2_\infty(M)$ with $S^2_\infty(S^3)$. We can write:
$$\partial C_2(M)\setminus U\check{M}=p_{(S^3)^2}^{-1}(\{(\infty,\infty)\})\cup \left(S^2_\infty(S^3) \times \check{M}\right) \cup \left(-\check{M}\times S^2_\infty(S^3)\right).$$
Using the identifications, we can define the Gauss map $G_M$ on $\partial C_2(M)\setminus U\check{M}$.

 \begin{defi}\label{def_gauss}
The \emph{Gauss map} $G_M$ from  $\partial C_2(M)\setminus U\check{M}$ to $S^2$ is defined as follows:
$$
 G_M = 
 \left\{
 \begin{array}{lcl}
 G_{S^3} &\text{ on }& p_{(S^3)^2}^{-1}(\{(\infty,\infty)\}) \\
 \iota_{S^2} \circ p_\infty  \circ p_1 &\text{ on }& S^2_\infty(S^3) \times \check{M} \\
 p_\infty \circ p_2 &\text{ on }& \check{M}\times S^2_\infty(S^3)
\end{array}
\right..$$
\end{defi}

There is a natural \say{Fulton-MacPherson} type compactification process for the sets $\check{C}_{X}(\check{M})$ of injections of $X$ into $\check{M}$, where $X$ is a finite set. We already described this compactification process when $X=\{1,2\}$, see Definition \ref{def_c2_compact}. For each $M$ as above, there is a contravariant functor from the category whose objects are finite sets and whose maps are injections to the category whose objects are smooth manifolds with boundary and ridges and whose maps are smooth maps. The functor maps a finite set $X$ to a compactification $C_X(M)$ of the set $\check{C}_{X}(\check{M})$ of injections of $X$ into $\check{M}$. If $f\colon Y\rightarrow X$ is an injection, the functor sends $f$ to a smooth map $p_f \colon C_{X}(M) \rightarrow C_{Y}(M)$, which extends the restriction map from $\check{C}_{X}(\check{M})$ to $\check{C}_{Y}(\check{M})$. associated with $f$. We let $C_4(M)$ be the space $C_{\{1,2,3,4\}}(M)$. A detailed exposition of this construction can be found in \cite[Chapter $8$]{lesbookzv2}. We describe the open codimension one faces of $C_4(M)$ in Definitions \ref{def_faceinf}, \ref{def_facepasinf}, and in Proposition \ref{prop_listface}.

Let $\Gamma$ be in $\{T_1,T_2,W_1,W_2\}$. Let $e$ be in $e_{uv}(\Gamma)$. We again denote by $p_e$ the natural smooth extension from $C_4(M)$ to $C_2(M)$ of the map $p_e$ of Definition \ref{def_proj}. Note that the former $p_e$ was exactly the restriction map from $\check{C}_4(M)$ to $\check{C}_2(M)$ associated with the following map from $\{1,2\}$ to $\{1,2,3,4\}$:
$$
\begin{array}{lcl}
\{1,2\} & \rightarrow & \{1,2,3,4\} \\
1 & \mapsto & u \\
2 & \mapsto & v.
\end{array}
$$
Let $(\omega_i)_i$ be a family of six propagating forms. Using the extended maps $p_e$, we have:
$$I(M,(\omega_i)_i,(\Gamma,j))= \int_{C_4(M)}  \bigwedge_{e \in E(\Gamma)} p_e^*(w_{j(e)}).$$
The only difference with Definition \ref{def_int} is that the space over which the forms are integrated is now compact. It is an important difference, since it ensures that the integrals converge.

\subsection{Acknowledgements}
I thank my advisor, Christine Lescop, for her careful reading and for suggestions.

\section{Proof of invariance}\label{sec_invariance}

\subsection{Sketch of proof}\label{sec_sketch}

In this section we show the invariance of $\lambda_2$ with respect to the change of propagating forms. Let $(\omega_i^0)_i$ and $(\omega_i^1)_i$ be two sets of propagating forms, respectively associated with sets $(\omega_{i,S^2}^0)_i$ and $(\omega_{i,S^2}^1)_i$ of volume-one forms on $S^2$ as in Definition \ref{def_propform}.

First, for each $i$, let $\eta_i$ be a one-form on $S^2$ such that $\omega_{i,S^2}^1-\omega_{i,S^2}^0=d\eta_i$. We define a closed two-form $\omega_{i,S^2}$ on $\left[0,1\right]\times S^2$ by: 
$$\omega_{i,S^2}^t= p_2^*(\omega_{i,S^2}^0) + d(t\eta_i),$$
where $p_2$ is the projection on the second factor and $t$ is the projection on the first factor $[0,1]$. Then $(\mathbf{1}_{\left[0,1\right]} \times G_M)^*(\omega_{i,S^2}^t)$ is a closed two-form on $$\left[0,1\right] \times \left(p_{M^2}^{-1}(\{(\infty,\infty)\})\cup \left(S^2_\infty(M) \times \check{M}\right) \cup \left(-\check{M}\times S^2_\infty(M)\right)\right) \subset [0,1] \times\partial C_2(M).$$ We extend it to a closed two-form on $\left[0,1\right] \times C_2(M)$ denoted by $\omega_i$, whose restriction to $\{0\}\times C_2(M)$ (resp. to $\{1\}\times C_2(M)$) is $\omega_i^0$ (resp. $\omega_i^1$). It is possible as the relevant relative cohomology groups are trivial, see \cite[Lemma $9.1$]{lesbookzv2}.

Let $\Gamma$ be in $\{T_1,T_2,W_1,W_2\}$ and $j$ be in $\bijlab(E(\Gamma))$. We now apply the Stokes theorem on $\left[0,1\right]\times C_{4}(M)$ to the following closed form:
$$\bigwedge_{e\in E(\Gamma)} (\mathbf{1}_{\left[0,1\right]} \times p_e)^*(\omega_{j(e)}).$$
The boundary of $C_4(M)$ admits a stratification. Let $(C_4(M))_1$ be the set of open codimension one faces of $C_4(M)$. We obtain:
$$
I(M,(\omega_i^1)_i,(\Gamma,j))-  I(M,(\omega_i^0)_i,(\Gamma,j))
= \sum_{F\in (C_4(M))_1} I(M,(\omega_i)_i,\Gamma,j,F)
$$
with
$$I(M,(\omega_i),\Gamma,j,F)= \int_{\left[0,1\right] \times F} \bigwedge_{e\in E(\Gamma)} (\mathbf{1}_{\left[0,1\right]} \times p_e)^*(\omega_{j(e)}).$$
In what follows, the integrals $I(M,(\omega_i),\Gamma,j,F)$ are denoted by $I(\Gamma,j,F)$. With this notation, Theorem \ref{thm_secondcount} is equivalent to the following proposition.
\begin{prop}\label{prop_main}
We have:
$$\sum_{\substack{\Gamma\in \{T_1,T_2\}\\j\in\bijlab(E(\Gamma))}} \sum_{F\in (C_4(M))_1} I\left(\Gamma,j,F\right) + 3\sum_{\substack{\Gamma\in \{W_1,W_2\}\\j\in\bijlab(E(\Gamma))}} \sum_{F\in (C_4(M))_1} I\left(\Gamma,j,F\right)  = 0.$$
\end{prop}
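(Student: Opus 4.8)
The plan is to classify the open codimension-one faces $F$ of $C_4(M)$ and show that, after summing over all graphs $\Gamma \in \{T_1,T_2,W_1,W_2\}$ and all labelings $j \in \mathfrak{S}_6(E(\Gamma))$ with the weights dictated by the statement (weight $1$ for $T_1,T_2$ and weight $3$ for $W_1,W_2$), the contributions $I(\Gamma,j,F)$ cancel face-by-face, or in small groups of faces. Recall that the codimension-one faces of $C_4(M)$ come in two flavours (this uses the description promised in Definitions \ref{def_faceinf}, \ref{def_facepasinf}, and Proposition \ref{prop_listface}): \emph{faces at infinity}, where a nonempty subset $S \subseteq \{1,2,3,4\}$ of the points escapes to $\infty$, and \emph{finite faces}, where a subset $S$ of at least two of the points collapses together at a common location of $\check{M}$ (the principal/diagonal faces), with the extreme case $S = \{1,2,3,4\}$ giving the "everything collapses" face and "everything goes to infinity" face. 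I would organize the argument by which subset $S$ is involved.

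First I would dispose of the \emph{degenerate} faces. For a face where $S$ has cardinality at least $3$ (or the whole set, or a large subset escaping to infinity), a dimension count shows the integrand $\bigwedge_{e \in E(\Gamma)} (\mathbf{1}_{[0,1]} \times p_e)^*(\omega_{j(e)})$ pulls back to a form of degree exceeding $\dim([0,1]\times F)$, or more precisely factors through a space of too-low dimension, so $I(\Gamma,j,F) = 0$. The key mechanism: once $\geq 3$ points collide, the relevant projections $p_e$ on the blown-up configuration (the "anomalous" boundary fiber) depend on too few independent directions. This is the standard vanishing argument (as in \cite[Chapter 9]{lesbookzv2}); here it should also use that each $\omega_i$ has been chosen so that on the $[0,1]$-parametrized family its restriction to the relevant boundary pieces is a pullback of a form on $S^2$, killing faces where a point runs to $\infty$ while others stay finite and the graph has no edge forcing a constraint — but the honest reason most such faces die is dimensional.

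The heart of the proof is the \emph{principal faces}: $S = \{u,v\}$, a pair of points colliding in $\check{M}$. On such a face $F_{\{u,v\}}$, the boundary fiber is the unit normal bundle, identified with $U\check{M}$, and the only edges of $\Gamma$ whose form "sees" the collapsing is exactly the set $e_{uv}(\Gamma) \cup e_{vu}(\Gamma)$ of edges between $u$ and $v$; on this face, $p_e$ for such an edge $e$ becomes (via the identification of the fiber with $U\check{M}$ and then, near $\infty$-free points, with $S^2$ through $G_M$) essentially the projection to the sphere direction, and the remaining edges restrict to ordinary propagating forms on a three-point (or two-point) configuration space. If $u,v$ are joined by a single edge, the integral over the $S^2$-direction of a single pulled-back two-form need not vanish — but this is where the \emph{cycle structure} and the precise combinatorial bookkeeping enters: summing over the group action (the $2^4 \times 6!$, i.e.\ edge-orientations reversed via $\iota$ and relabelings $j$) makes the odd part cancel, exactly the mechanism used in Remark \ref{rque_Lsecond} to pass between $\lambda_2$ and $\tilde\lambda_2$; and the faces where $u,v$ are joined by a \emph{double} edge (which occurs for $W_1, W_2$ and, after collapsing, also appears in the $T_i$ analysis) produce the terms that match up between the $T$-family and the $W$-family. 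I expect the genuinely delicate point to be precisely this matching: showing that the principal-face contributions from the theta-type collisions in $T_1,T_2$ (two edges between the two colliding vertices, the third triangle edge becoming a "residual" edge on a $C_3$) are carried, under the blow-up identification, to the same integrals as certain faces of $W_1,W_2$, and that the weights $\tfrac13$ versus $1$ in the definition of $\lambda_2$ — equivalently the $1$ versus $3$ in the Proposition — are exactly what is needed for these to cancel. This is an IHX/AS-type relation at the level of configuration-space integrals, and getting the orientations and the counting multiplicities to line up is the main obstacle; the remaining faces (one point colliding with the "point at infinity" stratum, handled by the propagating-form normalization $\int_{S^2}\omega_{S^2}=1$ and the antipodal map $\iota_{S^2}$) are comparatively routine. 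I would defer the most computational of these face-by-face identities to Section \ref{sec_one}, as the excerpt indicates.
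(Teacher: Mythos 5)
Your overall skeleton (classify the codimension-one faces of $C_4(M)$ and cancel contributions face by face or in small groups) is the paper's, but two of your key mechanisms are wrong. First, the blanket claim that every face with $|S|\geq 3$ dies by a dimension count fails: every open codimension-one face $F$ satisfies $\dim([0,1]\times F)=12$, which is exactly the degree of the integrand, so there is never a raw degree excess; vanishing only occurs when the integrand genuinely factors through a lower-dimensional space, which happens for the faces at infinity (Lemmas \ref{lem_inf} and \ref{lem_inf_bis}) and for the $W$-faces with $|A|=3$ or with $E_A(\Gamma)=\emptyset$ (Lemmas \ref{lem_degenerateW}, \ref{lem_degsansedge}). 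The anomalous face $F(\{1,2,3,4\})$ and the triangular faces of $T_1,T_2$ do \emph{not} vanish individually: the paper cancels the anomalous face between $\Gamma$ and the graph with all edges reversed via the antipodal map on the fiber (Lemma \ref{lem_anomalous}, an even-degree phenomenon), and cancels each triangular face against another labelling via an orientation-reversing reflection of the colliding triple (Lemma \ref{lem_triangle}). Your proposal gives no argument covering these faces.

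Second, you locate the $T$--$W$ matching at the wrong faces and propose a cancellation for the single-edge faces that cannot work. In the paper, the double-edge collision faces occur only for $W_1,W_2$ (a tetrahedron has no pair of vertices joined by two edges, so your ``theta-type collisions in $T_1,T_2$'' do not exist) and they cancel \emph{internally}, by exchanging the labels of the two parallel edges through an orientation-reversing symmetry (Lemmas \ref{lem_doubleedge1}, \ref{lem_doubleedge2}, \ref{lem_doubleedge1_d}, \ref{lem_doubleedge2_d}). The faces where the colliding pair is joined by exactly one edge are the ones where the IHX/Jacobi relation operates: collapsing that edge groups the graphs into triples (two tetrahedra, one double-theta), and the cancellation there uses precisely the weights $w(T_1)=1$, $w(W_1)=2$ together with the automorphism counts $3$ and $2$ — this is where the coefficient $3$ in the Proposition is consumed (Proposition \ref{prop_IHX}, proved in Section \ref{sec_one}). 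Your claim that the single-edge faces already cancel by summing over labellings and edge reversals (``the odd part cancels'') would, if true, make the relative coefficient $3$ irrelevant, which it is not; no symmetry of a single graph identifies these contributions with their negatives, and the paper's Lemma \ref{lem_reverse} only transfers identities from $(T_1,W_1)$ to $(T_2,W_2)$, it does not produce the cancellation. So the heart of the proof — the one-edge-face IHX cancellation with the correct weights and orientations — is missing from your plan, and the mechanism you substitute for it would fail.
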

To prove Proposition \ref{prop_main} we first describe $(C_4(M))_1$ and make a list of the terms that appear in the above equality. We then construct an appropriate partition of the set of terms and show that the sum of terms in each element of the partition is $0$. The elements of this partition have cardinality $1$, $2$ or $3$. The principles of the proofs of those various cancellations are not new. They were used by Bott and Taubes in \cite{botttaubes} and by Kontsevich in \cite{ko} for example. See also \cite[Chapter $9$]{lesbookzv2}.

\subsection{List of faces}
In this section, we enumerate the open codimension one faces of $C_4(M)$. We refer to \cite[Chapter $8$]{lesbookzv2} for a study of compactifications of configuration spaces. The next definition describes the faces where some points go to infinity. When $X$ is a finite set, we denote the cardinality of $X$ by $|X|$.
\begin{defi} \label{def_faceinf}
 Let $A$ be a subset of $\{1,2,3,4\}$ such that $|A|\geq 1$. Let $\check{S}(T_\infty M,A)$ be the space of injections of $A$ into $\R^3\setminus\{0\}$ up to dilation.\footnote{In this article, a \emph{dilation} is a map from $\R^3\setminus\{0\}$ to itself of the form $x\mapsto \mu x$ where $\mu$ is in $\R_+^*$.} We define $F(A,\infty)$ to be the product $\check{C}_{\{1,2,3,4\}\setminus A}(\check{M}) \times  \check{S}(T_\infty M,A)$.
\end{defi}
Let us describe the inclusion $F(\{1,2,3,4\},\infty)\subset \partial C_4(M)$. Note the equality $$F(\{1,2,3,4\},\infty)= \check{S}(T_\infty M,\{1,2,3,4\}).$$ 
Let $c$ be a point in $\check{S}(T_\infty M,\{1,2,3,4\})$. It can be represented by an injection $c'$ of $\{1,2,3,4\}$ into $\R^3\setminus\{0\}$ such that $||c'(u)||> 1$ for any $u\in\{1,2,3,4\}$. Recall that we have chosen an identification between $\mathring{B}_{1,\infty}$ and a neighbourhood of $\infty$ in $M$. The sequence $(nc')_{n\in\N^*}$ converges to a point in $\partial C_4(M)$, and we identify $c$ with this point. If $\Gamma$ is in $\{T_1,T_2,W_1,W_2\}$ and $e$ is in $e_{uv}(\Gamma)$, then $p_e(c)$ is the limit in $C_2(M)$ of the sequence $(nc'(u),nc'(v))_n$. By continuity, we have:
$$G_M\circ p_e (c)= \frac{c'(v)-c'(u)}{||c'(v)-c'(u)||}.$$

\begin{exemple} \label{ex_faceinfC2}
 Let us study those definitions in the case of $C_2(M)$. In a similar way to Definition \ref{def_faceinf}, we define, for $A$ a subset of $\{1,2\}$ such that $|A|\geq 1$:
 $$F(A,\infty)=\check{C}_{\{1,2\}\setminus A}(\check{M})\times \check{S}(T_\infty M,A).$$
 Recall the description of $\partial C_2(M)$ in Proposition \ref{bordcdeu}. 
 Then the faces $F(A,\infty)$ are precisely the open codimension faces of $C_2(M)$ where at least one point goes to $\infty$. Indeed, the space $\check{S}(T_\infty M,\{1\})$ is identified with $S^2_\infty(M)$ via the map $p_\infty$. The faces $F(\{1\},\infty)$ and $F(\{2\},\infty)$ are respectively identified with $S^2_{\infty}(M)\times \check{M}$, and $(-\check{M})\times S^2_{\infty}(M)$. The face $F(\{v_1,v_2\},\infty)=\check{S}(T_\infty M,\{v_1,v_2\})$ is identified with:
$$p_{M^2}^{-1}(\{(\infty,\infty)\})\setminus \overline{p_{M^2}^{-1}\Bigl((\{\infty\}\times \check{M}) \cup (\check{M}\times \{\infty\}) \cup \Delta_{\check{M}} \Bigr)}.$$
\end{exemple}
The next definition describes the open codimension faces of $C_4(M)$ where a bunch of points coincide. Together with the previously described faces where some points go to $\infty$, these are exactly all the open codimension one faces of $C_4(M)$. See Proposition \ref{prop_listface}.
\begin{defi}\label{def_facepasinf}
Let $A$ be a subset of $\{1,2,3,4\}$ such that $|A|\geq 2$, and $a\in A$. For $m\in \check{M}$, let $\check{S}_A(T_{m}M)$ be the space of injections of $A$ into $T_{m}M$ up to global translation and dilation. We define $F(A)$ as the fibered product over $\check{C}_{(\{1,2,3,4\}\setminus A) \cup\{a\}}(\check{M})$  whose fiber over a point $c$ is $\check{S}_A(T_{c(a)}M)$.
\end{defi}
Let us describe the inclusion $F(\{1,2,3,4\})\subset \partial C_4(M)$ when $M=S^3$. Note first that $F(\{1,2,3,4\})$ is the fibered product over $\R^3$ whose fiber over $m\in \R^3$ is $\check{S}_{\{1,2,3,4\}}(T_m\R^3)$. Let $c'\colon\{1,2,3,4\}\rightarrow T_m\R^3$ be a representative of a point $c$ in $\check{S}_{\{1,2,3,4\}}(T_m\R^3)$. We use the canonical identification between $T_m\R^3$ and $\R^3$ to see $c'$ as a map from $\{1,2,3,4\}$ to $\R^3$. Then the sequence $\left(\left(m+\frac{1}{n}c'(i)\right)_{i\in\{1,2,3,4\}}\right)_{n\in\N^*}$ converges to a point in $\partial C_4(M)$. We identify $(m,c)\in F(\{1,2,3,4\})$ with this point. The face $F(\{1,2,3,4\})$ is often referred to as the \emph{anomalous face}. 
\begin{exemple}\label{ex_facepasinf}
    Let us study Definition \ref{def_facepasinf} in the case of $C_2(M)$. The face $F(\{1,2\})$ is the fibered product over $\check{C}_{(\{1,2\}\setminus\{1,2\})\cup \{1\}}(\check{M})=\check{M}$ whose fiber over $m\in\check{M}$ is $\check{S}_{\{1,2\}}(T_mM)$. As $\check{S}_{\{1,2\}}(T_mM)$ is $UM_m$, the face $F(\{1,2\})$ is exactly $U\check{M}$. 
\end{exemple}
According to Proposition \ref{bordcdeu}, Examples \ref{ex_faceinfC2} and \ref{ex_facepasinf} describe all the open codimension one faces of $C_2(M)$. As in the case of $C_2(M)$, Definitions \ref{def_faceinf} and \ref{def_facepasinf} give an exhaustive list of the open codimension one faces of $C_4(M)$, as the following particular case of \cite[Proposition $8.18$]{lesbookzv2} shows.
\begin{prop}\label{prop_listface}
The open codimension one faces of $C_{4}(M)$ are the faces $F(A)$ for $|A|\geq 2$ and the faces $F(A,\infty)$ for $|A|\geq 1$.
\end{prop}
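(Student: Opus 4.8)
The plan is to obtain Proposition~\ref{prop_listface} as the announced specialisation of \cite[Proposition~8.18]{lesbookzv2} to the finite set $X=\{1,2,3,4\}$ with no marked point constrained to a submanifold, so the work is one of unwinding the general statement rather than proving something new. I would begin by recalling from \cite[Chapter~8]{lesbookzv2} that $C_4(M)=C_X(M)$ is built from $M^X$, after the copies of $\infty$ have been blown up, by an iterated sequence of blow-ups performed along the closures of the strata where a subset of the points either all collide in $\check{M}$ or all escape towards $\infty$, the blow-ups being carried out in an order compatible with inclusion of these strata (smaller strata first, exactly as in the four-step construction of $C_2(M)$ in Definition~\ref{def_c2_compact}). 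Since a blow-up replaces the chosen stratum by its unit normal bundle, which becomes a new boundary piece, the open codimension one faces of $C_4(M)$ are precisely the boundary pieces produced by blowing up the \emph{maximal} degeneration strata, and these come in the two families announced.

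Next I would match each family with the faces already defined. For $A\subseteq\{1,2,3,4\}$ with $|A|\ge 2$, the locus of $M^X$ where the points indexed by $A$ coincide at a common point $m\in\check{M}$, the others staying distinct in $\check{M}$, is a maximal stratum; blowing it up creates a boundary piece whose fibre is the unit normal bundle to the partial diagonal $\Delta_A$, i.e.\ the space of injections of $A$ into $T_{m}M$ modulo global translation and dilation — this is exactly $F(A)$ of Definition~\ref{def_facepasinf} (and $|A|=1$ is excluded since a single point cannot collide with itself). For $A\subseteq\{1,2,3,4\}$ with $|A|\ge 1$, the locus where the points of $A$ all tend to $\infty$, the others staying in $\check{M}$, is again a maximal stratum — here even $|A|=1$ is a codimension one degeneration, because blowing up the point $\infty$ in a $3$-manifold already produces a boundary $2$-sphere — and blowing it up produces a boundary piece whose fibre over a configuration of $\{1,2,3,4\}\setminus A$ is the space of injections of $A$ into $\R^3\setminus\{0\}$ up to dilation, via the chosen identification of a neighbourhood of $\infty$ with $\mathring{B}_{1,\infty}$. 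This is $F(A,\infty)$ of Definition~\ref{def_faceinf}; the case $A=\{1,2,3,4\}$ recovers $\check{S}(T_\infty M,\{1,2,3,4\})$ as described after that definition, and the cases $|A|=1$ recover the pieces $S^2_\infty(M)\times\check{M}$ and $-\check{M}\times S^2_\infty(M)$ familiar from $\partial C_2(M)$ in Example~\ref{ex_faceinfC2}.

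The only delicate point, and the one I expect to be the main obstacle, is completeness: one must argue that these are \emph{all} the codimension one faces. Any other way the configuration can degenerate — several disjoint groups colliding at once, a group colliding while a disjoint group runs off to $\infty$, or a nested collision inside a group that is itself collapsing or escaping — corresponds to blowing up a stratum that is not maximal (it lies in the closure of one of the two types above, or is an intersection of two of them), hence yields a face of codimension at least two, i.e.\ a ridge or deeper corner of $C_4(M)$. This is precisely what the ordering of the blow-ups in \cite[Chapter~8]{lesbookzv2} and the combinatorial statement of \cite[Proposition~8.18]{lesbookzv2} encode, so the remaining task is the careful translation of that proposition's indexing of boundary strata by nested families of subsets into the concrete faces $F(A)$ and $F(A,\infty)$, checking along the way that the fibre descriptions and the codimension count agree.
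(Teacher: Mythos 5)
Your proposal is correct and follows the same route as the paper, which gives no independent argument: it simply states the result as a particular case of \cite[Proposition 8.18]{lesbookzv2}, with the identification of the boundary pieces with the faces $F(A)$ and $F(A,\infty)$ carried out exactly as in your second paragraph (compare Examples \ref{ex_faceinfC2} and \ref{ex_facepasinf} for the $C_2(M)$ case). The completeness issue you flag as the delicate point is likewise delegated to the cited proposition in the paper, so there is nothing missing relative to the paper's own treatment.
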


\subsection{Faces where some vertices tend to \texorpdfstring{$\infty$}{infinity}}\label{subsection_infinity}

Let $\Gamma$ be in $\{T_1,T_2,W_1,W_2\}$. Let $j$ be in $\bijlab(E(\Gamma))$. In this section, we show that $I(\Gamma,j,F(A,\infty))=0$ for any $A\subset \{1,2,3,4\}$ such that $|A|\geq 1$. This is the content of Lemmas \ref{lem_inf} and \ref{lem_inf_bis}.

The next lemma focuses on the face $F(\{1,2,3,4\},\infty)$. If $f$ is a smooth map between smooth manifolds, we denote the tangent map to $f$ at a point $c$ by $d_cf$.
\begin{lem} \label{lem_inf}
We have:
$$I(\Gamma,j,F(\{1,2,3,4\},\infty))=0.$$
 \end{lem}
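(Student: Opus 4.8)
The face $F(\{1,2,3,4\},\infty) = \check{S}(T_\infty M,\{1,2,3,4\})$ is an open subset of the space of injections of $\{1,2,3,4\}$ into $\R^3\setminus\{0\}$ up to dilation; since dilations form a $1$-dimensional group and $\check{C}_{\{1,2,3,4\}}(\R^3)$ has dimension $12$, this face has dimension $11$, so $[0,1]\times F(\{1,2,3,4\},\infty)$ has dimension $12$. The form $\bigwedge_{e\in E(\Gamma)} (\indi \times p_e)^*(\omega_{j(e)})$ is a $12$-form (six edges, each contributing a $2$-form), so the integral is of the right degree and we must show it vanishes identically. The strategy is to show that $\bigl(\bigwedge_{e\in E(\Gamma)} (\indi \times p_e)^*(\omega_{j(e)})\bigr)_{|[0,1]\times F(\{1,2,3,4\},\infty)}$ is the pullback of a form on a manifold of dimension strictly less than $12$, hence zero.

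\textbf{Key steps.} First I would use the computation recorded just before the lemma: for $c\in F(\{1,2,3,4\},\infty)$ represented by $c'\colon\{1,2,3,4\}\to\R^3\setminus\{0\}$ and $e\in e_{uv}(\Gamma)$, one has $G_M\circ p_e(c) = \frac{c'(v)-c'(u)}{\|c'(v)-c'(u)\|}$, and on this face $\omega_{j(e)}$ restricts (by Definition \ref{def_propform} and the construction of the homotopy in Section \ref{sec_sketch}) to the pullback of a form on $S^2$ under $G_M$, i.e.\ to $(\indi\times G_M\circ p_e)^*(\omega^t_{j(e),S^2})$. Hence the restricted form is the pullback, under the map
$$
\Phi\colon [0,1]\times F(\{1,2,3,4\},\infty) \longrightarrow [0,1]\times (S^2)^6,\qquad (t,c)\mapsto \bigl(t,(G_M\circ p_e(c))_{e\in E(\Gamma)}\bigr),
$$
of the $12$-form $\bigwedge_e \tilde{p}_e^*(\omega^t_{j(e),S^2})$ on $[0,1]\times(S^2)^6$ (where $\tilde p_e$ is the appropriate projection). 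Second, I would observe that $\Phi$ factors through a lower-dimensional manifold: the $G_M\circ p_e(c)$ are the unit directions of the six difference vectors $c'(v)-c'(u)$ between four points of $\R^3$, and these four points are only defined up to translation \emph{and} dilation — but translation has already been used to pass from $\R^3$-configurations to difference vectors, so the six directions depend only on the configuration of four points up to translation and dilation, a space of dimension $12-3-1 = 8$; including the $[0,1]$ factor gives dimension $9 < 12$. Therefore $\Phi$ factors as $[0,1]\times F(\{1,2,3,4\},\infty)\to N\to [0,1]\times(S^2)^6$ with $\dim N = 9$, and any $12$-form pulled back through $N$ vanishes.

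\textbf{Alternative / cleaner formulation.} Rather than tracking dimensions of auxiliary spaces, the slickest route is: the restriction of $\bigwedge_e (\indi\times p_e)^*(\omega_{j(e)})$ to $[0,1]\times F(\{1,2,3,4\},\infty)$ is pulled back via a map to $[0,1]\times \check{S}(T_\infty M,\{1,2,3,4\})$ composed with — actually it \emph{is} already a form on $[0,1]\times \check{S}(T_\infty M,\{1,2,3,4\})$, which has dimension $12$, so this observation alone is not enough; the content is precisely that it factors through the quotient of the configuration space by the \emph{full} affine group $x\mapsto \mu x + b$, killing $3$ more dimensions and leaving room $11-3 = 8$ plus the interval. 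I would phrase it as: the six Gauss directions $G_M\circ p_e$ are invariant under the translation action $c'\mapsto c' + b$ on representatives, this action is free on $\check{C}_{\{1,2,3,4\}}(\R^3)$ and descends to a free $\R^3$-action on $\check{S}(T_\infty M,\{1,2,3,4\})$ whose quotient $Q$ has dimension $8$, and the restricted integrand is pulled back from $[0,1]\times Q$, a manifold of dimension $9$; since $9 < 12$ the $12$-form is zero, so the integral vanishes.

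\textbf{Main obstacle.} The one point requiring care is justifying that $\omega_{j(e)}$, restricted to $[0,1]\times F(\{1,2,3,4\},\infty)$, really equals $(\indi\times G_M\circ p_e)^*$ of a form on $S^2$ — i.e.\ that the ``boundary'' behaviour of a propagating form at the all-points-at-infinity stratum is governed by the Gauss map exactly as at the codimension-one strata used to define propagating forms. This follows from Definition \ref{def_propform} together with the description of $\partial C_2(M)\setminus U\check{M}$ and the continuity statement $G_M\circ p_e(c) = \frac{c'(v)-c'(u)}{\|c'(v)-c'(u)\|}$ established above the lemma, plus the fact that in the homotopy of Section \ref{sec_sketch} the extended form $\omega_i$ still restricts to $(\indi\times G_M)^*(\omega^t_{i,S^2})$ over the relevant part of $[0,1]\times\partial C_2(M)$; I would spell this out in one or two sentences. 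Everything else is the elementary dimension count above.
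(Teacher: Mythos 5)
Your proposal is correct and uses essentially the same mechanism as the paper: on $F(\{1,2,3,4\},\infty)$ the maps $G_M\circ p_e$ are invariant under translating a configuration, and since the integrand is a top-degree pullback of forms via these maps it must vanish — the paper phrases this infinitesimally (a nonzero common kernel vector of the differentials $d_c(G_M\circ p_e)$, obtained from the local translation action on the slice $\|c(2)-c(1)\|=1$), while you phrase it globally by factoring through the quotient of the four-point configuration space by translations and dilations, of dimension $8$, so that the $12$-form lives on a $9$-dimensional space. One small caveat: in your ``cleaner formulation'' the translation action does not literally descend to $\check{S}(T_\infty M,\{1,2,3,4\})$ (translations neither commute with dilations nor preserve $\R^3\setminus\{0\}$), so you should keep your first formulation — the six direction maps factor through the quotient of $\check{C}_{\{1,2,3,4\}}(\R^3)$ by translations and dilations — which is exactly what the dimension count needs.
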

 \begin{proof}
We identify $F(\{1,2,3,4\},\infty)$ with the set of injections $c\colon\{1,2,3,4\} \rightarrow \R^3\setminus\{0\}$ such that $||c(2)-c(1)||=1$. Let $c$ be such an injection. For $\lambda$ in a small neighbourhood of $0$ and injections $c'$ in a small neighbourhood of $c$, we define $\lambda \cdot c' \in F(\{1,2,3,4\},\infty)$ to be the injection:
$$v \mapsto \lambda + c'(v).$$
For every edge $e$, we have:
$$G_M \circ p_e(\lambda \cdot c)= G_M \circ p_e(c).$$ 
This follows from our description of $G_{M}\circ p_e$ on $F(\{1,2,3,4\})$. Thus the intersection of the kernels of the maps $d_c(G_M \circ p_e)$ for $e\in E(\Gamma)$ is non trivial on $[0,1]\times F(\{1,2,3,4\},\infty)$. Moreover, on $F(\{1,2,3,4\},\infty)$ we have:
$$(\mathbf{1}_{\left[0,1\right]} \times p_e)^* (\omega_{j(e)})= (\mathbf{1}_{\left[0,1\right]} \times (G_M \circ p_e))^* (\omega_{i,S^2}^t).$$
As the degree of this form is the dimension of  $[0,1]\times F(\{1,2,3,4\},\infty)$ and the intersection of the kernels of the maps $d_c(G_M \circ p_e)$ is non trivial, the right member of the previous equality vanishes at $c$. So it is identically zero on $[0,1]\times F(\{1,2,3,4\},\infty)$.
 \end{proof}
We now investigate the case of faces $F(A,\infty)$ where $A$ is different from $\{1,2,3,4\}$. For $c\in F(A,\infty)$, we write $c=(c_{\{1,2,3,4\}\setminus A},c_A)$. Let $e$ be in $e_{uv}(\Gamma)$. We now explicitly describe $p_e$ on the faces $F(A,\infty)$. Let $c$ be in $F(A,\infty)$. We distinguish four cases:

\begin{enumerate}
 \item if $u,v \in A$, then $p_e(c)=(c_A(u),c_A(v))\in p_{M^2}^{-1}(\{(\infty,\infty)\})$,
 \item if $u \in A$ and $v \notin A$, then $p_e(c)=(c_{A}(u),c_{\{1,2,3,4\}-A}(v)) \in S^2_\infty(M) \times \check{M}$,
\item if $u \notin A$ and $v \in A$, then $p_e(c)=(c_{\{1,2,3,4\}\setminus A}(u),c_{A}(v)) \in \check{M}\times S^2_\infty(M)$,
\item if $u, v \notin A$, then $p_e(c)=(c_{\{1,2,3,4\} \setminus A}(u),c_{\{1,2,3,4\}\setminus A}(v)) \in C_2(M)$.
 \end{enumerate}
 When $u\in A$ or $v\in A$, the image of the restriction of $p_e$ to $F(A,\infty)$ is included in the part of $\partial C_2(M)$ where the propagating forms are explicitly given as pull-backs by $G_M$ of $2$-forms on $S^2$. The next lemma follows from the description of $p_e$  on $F(A,\infty)$ and from Definition \ref{def_gauss}.
 \begin{lem}\label{lem_factorinf}
     Assume that $\{u,v\}\cap A \neq \emptyset$. The restriction of the map $G_M\circ p_e$ to $F(A,\infty)$ factors through the projection of $F(A,\infty)$ onto $\check{S}(T_\infty,A)$.
 \end{lem}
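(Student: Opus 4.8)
The plan is to verify the factorisation directly from the explicit description of $p_e$ on $F(A,\infty)$ given in the four-case list just above the lemma, combined with the formula for $G_M$ on $\partial C_2(M)\setminus U\check{M}$ from Definition \ref{def_gauss}. Writing $c=(c_{\{1,2,3,4\}\setminus A},c_A)$ for a point of $F(A,\infty)$, it suffices to check in each of the relevant cases that $G_M\circ p_e(c)$ depends on $c$ only through the component $c_A\in\check{S}(T_\infty M,A)$; this is exactly the assertion that $G_M\circ p_e$ is constant on the fibres of the projection $F(A,\infty)\to\check{S}(T_\infty M,A)$. The hypothesis $\{u,v\}\cap A\neq\emptyset$ rules out case $(4)$ of the list, so only cases $(1)$, $(2)$, $(3)$ have to be treated.

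In case $(1)$, $u,v\in A$ and $p_e(c)=(c_A(u),c_A(v))\in p_{M^2}^{-1}(\{(\infty,\infty)\})$, which visibly depends only on $c_A$. Running the continuity argument used for $F(\{1,2,3,4\},\infty)$ just before Example \ref{ex_faceinfC2} — which relies on the smooth extension of $G_{S^3}$ to $C_2(S^3)$ from Proposition \ref{bordcdeu} and on the chosen identification near $\infty$ — one obtains, for a representative $c_A'$ of $c_A$ with $\|c_A'(w)\|>1$ for all $w\in A$, that $G_M\circ p_e(c)=\frac{c_A'(v)-c_A'(u)}{\|c_A'(v)-c_A'(u)\|}$. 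In cases $(2)$ and $(3)$, exactly one of $u,v$ lies in $A$, and $p_e(c)$ lies in $S^2_\infty(M)\times\check{M}$ or in $\check{M}\times S^2_\infty(M)$, with $S^2_\infty(M)$-coordinate equal to the image of $c_A$ under the composition of evaluation at $u$ (resp. $v$) with the identification $\check{S}(T_\infty M,\{u\})\cong S^2_\infty(M)$ via $p_\infty$ recalled in Example \ref{ex_faceinfC2}. By Definition \ref{def_gauss}, $G_M\circ p_e(c)$ is then $\iota_{S^2}\circ p_\infty$ (resp.\ $p_\infty$) applied to that coordinate. In all three cases the resulting expression involves only $c_A$, and moreover it is unchanged under the common dilation acting on $c_A$, so it descends to a well-defined function on $\check{S}(T_\infty M,A)$; this gives the claimed factorisation.

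The argument is essentially bookkeeping, and I do not anticipate a genuine obstacle: the only care required is to keep track of the identifications (the diffeomorphism of a neighbourhood of $\infty$ with $\mathring{B}_{1,\infty}$, and the map $p_\infty$ identifying $S^2_\infty(M)$ with $S^2$) and to check that the directions appearing in the formulas are invariant under the dilation equivalence defining $\check{S}(T_\infty M,A)$. The latter is immediate, since normalising a vector or a difference of vectors commutes with positive scaling.
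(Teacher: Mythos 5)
Your proof is correct and follows exactly the route the paper intends: the paper gives no detailed argument, stating only that the lemma follows from the case-by-case description of $p_e$ on $F(A,\infty)$ and Definition \ref{def_gauss}, which is precisely the verification (cases $(1)$--$(3)$, with the dilation-invariance check) that you carry out.
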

As it will be needed later, we let $E_A(\Gamma)$ denote the set of edges of $\Gamma$ with both ends in $A$.
\begin{lem} \label{lem_inf_bis}
   Let $A$ be a subset of $\{1,2,3,4\}$ such that $1\leq |A|\leq 3$. Then:
$$I\left(\Gamma,j, F(A,\infty)\right)=0.$$

 \end{lem}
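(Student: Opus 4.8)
The idea is to show that the integrand $\bigwedge_{e\in E(\Gamma)}(\mathbf{1}_{[0,1]}\times p_e)^*(\omega_{j(e)})$ vanishes identically on $[0,1]\times F(A,\infty)$ by a dimension count, analogous to Lemma~\ref{lem_inf}. The key structural input is Lemma~\ref{lem_factorinf}: whenever an edge $e\in e_{uv}(\Gamma)$ has at least one endpoint in $A$, the form $(\mathbf{1}_{[0,1]}\times p_e)^*(\omega_{j(e)})$ on the relevant part of the boundary is a pullback of a $2$-form on $S^2$ via a map that factors through the projection $F(A,\infty)\to\check{S}(T_\infty M,A)$. So all such edges contribute forms pulled back from the single finite-dimensional manifold $[0,1]\times\check{S}(T_\infty M,A)$, whose dimension is $1+(3|A|-4)=3|A|-3$. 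The number of edges of $\Gamma$ meeting $A$ is $|E(\Gamma)|-|E_{\{1,2,3,4\}\setminus A}(\Gamma)|=6-|E_{\bar A}(\Gamma)|$, where $\bar A=\{1,2,3,4\}\setminus A$, so the wedge of those pullbacks has degree $2(6-|E_{\bar A}(\Gamma)|)$.

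\textbf{Key steps.} First I would split $E(\Gamma)$ into the edges meeting $A$ and the edges $E_{\bar A}(\Gamma)$ entirely outside $A$, and write the integrand as a product of the corresponding two wedges. Second, using Lemma~\ref{lem_factorinf}, I would observe that the first wedge is pulled back from $[0,1]\times\check{S}(T_\infty M,A)$, a manifold of dimension $3|A|-3$; hence if $2(6-|E_{\bar A}(\Gamma)|)>3|A|-3$, that wedge vanishes and we are done. Third, I would check this inequality case by case on $|A|\in\{1,2,3\}$ for each $\Gamma\in\{T_1,T_2,W_1,W_2\}$, using that $\Gamma$ is trivalent with $4$ vertices and $6$ edges: for $|A|=1$ no edge lies in $\bar A$-complement trivially since a single vertex has no internal edge, giving degree $12>0=3\cdot1-3$; for $|A|=2$ we have $|E_{\bar A}(\Gamma)|\le 1$ (the complement $\bar A$ also has two vertices and at most a double edge in $W_i$, but one must count carefully), so degree $\ge 2(6-1)=10>3=3\cdot2-3$; for $|A|=3$, $\bar A$ is a single vertex so $|E_{\bar A}(\Gamma)|=0$ and degree $12>6=3\cdot3-3$. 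In each case the degree of the $A$-meeting wedge strictly exceeds $\dim([0,1]\times\check{S}(T_\infty M,A))$.

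\textbf{Main obstacle.} The only delicate point is the bookkeeping for $|A|=2$: one must verify that $|E_{\bar A}(\Gamma)|$ is small enough that $2(6-|E_{\bar A}(\Gamma)|)$ still exceeds $3$. Since $\Gamma$ is trivalent with $6$ edges, and the two vertices in $\bar A$ together have $6$ edge-ends, at most $3$ of which can be edges internal to $\bar A$ (and for $T_1,T_2$ at most one, for $W_1,W_2$ at most two since there is a double edge but no triple edge), we always get $|E_{\bar A}(\Gamma)|\le 3$, hence degree $\ge 6>3$; and in fact for our four graphs $|E_{\bar A}(\Gamma)|\le 2$, giving degree $\ge 8$. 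So the inequality is comfortably satisfied in every case, and the dimension argument of Lemma~\ref{lem_inf} applies verbatim: the pulled-back form of excessive degree vanishes at every point, hence the whole integrand is identically zero on $[0,1]\times F(A,\infty)$, so $I(\Gamma,j,F(A,\infty))=0$.
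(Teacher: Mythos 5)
Your strategy is the same as the paper's: split $E(\Gamma)$ into the edges meeting $A$ and the edges contained in the complement, use Lemma \ref{lem_factorinf} to see that the wedge over the edges meeting $A$ is pulled back through $[0,1]\times\check{S}(T_\infty M,A)$, and kill it by comparing its degree with the dimension of that space. However, there is a genuine error in the dimension count: $\check{S}(T_\infty M,A)$ is the space of injections of $A$ into $\R^3\setminus\{0\}$ modulo dilation only (Definition \ref{def_faceinf}), so its dimension is $3|A|-1$ and $[0,1]\times\check{S}(T_\infty M,A)$ has dimension $3|A|$, not $3|A|-3$; you have used the formula for $\check{S}_A(T_mM)$ (Definition \ref{def_facepasinf}), which also quotients by translation. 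Consequently the inequality you verify, namely degree $>3|A|-3$, is not the one needed; the vanishing requires degree $>3|A|$. There is also a miscount at $|A|=1$: the complement of a single vertex contains three edges, so $|E_{\bar A}(\Gamma)|=3$ and the wedge over the edges meeting $A$ has degree $6$, not $12$.

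The argument is nonetheless repairable, because the correct inequality does hold in every case. Writing $E_A(\Gamma)$ and $E_C(\Gamma)$ for the edges inside $A$ and the edges from $A$ to its complement, trivalence gives $3|A|=|E_C(\Gamma)|+2|E_A(\Gamma)|$, while the degree of the relevant wedge is $2|E_C(\Gamma)|+2|E_A(\Gamma)|$; so the strict inequality is equivalent to $E_C(\Gamma)\neq\emptyset$, which holds for every $A$ with $1\leq|A|\leq 3$ since $\Gamma$ is connected. This is exactly how the paper concludes, uniformly and without the case-by-case bookkeeping on $|A|$ and on $|E_{\bar A}(\Gamma)|$ that your version relies on. With the corrected dimension ($3|A|$) your case checks do go through ($6>3$, $8,10,12>6$, $12>9$), but as written the comparison against $3|A|-3$ does not establish the vanishing.
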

 \begin{proof}
Let $E_C(\Gamma)$ be the set of edges between a vertex in $A$ and a vertex outside $A$. We prove that the following form vanishes on $F(A,\infty)$:
$$\bigwedge_{e \in E_A(\Gamma) \cup E_C(\Gamma)} (\mathbf{1}_{\left[0,1\right]} \times p_e)^* (\omega_{j(e)}).$$
This obviously implies that the form
$\bigwedge_{e \in E(\Gamma))} (\mathbf{1}_{\left[0,1\right]} \times p_e)^* (\omega_{j(e)})$
vanishes on $F(A,\infty)$, which in turn implies the lemma. Let $f$ be the projection of $F(A,\infty)$ on $\check{S}(T_\infty M,A)$. Let $e$ be in $E_C(\Gamma) \cup E_A(\Gamma)$. On $F(A,\infty)$ we have:
$$(\mathbf{1}_{\left[0,1\right]} \times p_e)^* (\omega_{j(e)})=(\mathbf{1}_{\left[0,1\right]} \times (G_M \circ p_e))^* (\omega_{j(e),S^2}).$$
Applying Lemma \ref{lem_factorinf} we can write:
$$G_M\circ p_e = p_e' \circ f,$$
where $p_e'$ is a map from to $\check{S}(T_\infty M,A)$ to $S^2$. With this notation, we have:
$$
\bigwedge_{e \in E_A(\Gamma) \cup E_C(\Gamma)} (\mathbf{1}_{\left[0,1\right]} \times p_e)^* (\omega_{j(e)})= (\indi \times f)^*\left(\bigwedge_{e \in E_A(\Gamma) \cup E_C(\Gamma)} (\mathbf{1}_{\left[0,1\right]} \times p_e')^* (\omega_{j(e),S^2})\right).
$$
The degree of the form $\bigwedge_{e \in E_A(\Gamma) \cup E_C(\Gamma)} (\mathbf{1}_{\left[0,1\right]} \times p_e')^* (\omega_{j(e),S^2})$ is  $2|E_C(\Gamma)|+2|E_A(\Gamma)|$. The dimension of $[0,1]\times\check{S}(T_\infty M,A)$ is $1+(3|A|-1)=3|A|$. Moreover, we have $3|A|=|E_C(\Gamma)|+2|E_A(\Gamma)|$. Since $E_C(\Gamma)$ is not empty, we get $2|E_C(\Gamma)|+2|E_A(\Gamma)|>3|A|$. So this form vanishes.
 \end{proof}

\subsection{Other degenerate faces}

In Section \ref{subsection_infinity} we encountered faces where the forms to be integrated actually vanish. For this reason, these faces are called \emph{degenerate faces}. Lemmas \ref{lem_degenerateW} and \ref{lem_degsansedge} provide other examples of degenerate faces.

Let $\Gamma$ be in $\{T_1,T_2,W_1,W_2\}$. Let $A$ be a subset of $\{1,2,3,4\}$ such that $|A|\geq 2$. Let $a$ be in $A$. For $c\in F(A)$, we write $c=(c_{(\{1,2,3,4\}-A) \cup\{a\}},c_A)$, meaning that $c_{(\{1,2,3,4\}\setminus A) \cup\{a\}}$ is the projection of $c$ on $\check{C}_{(\{1,2,3,4\} \setminus A) \cup\{a\}}(\check{M})$ and $c_A$ is a representative of $c\in\check{S}_A(T_{c(a)}M)$. Let $e$ be in $e_{uv}(\Gamma)$. We now explicitly describe $p_e$ on $F(A)$. Let $c$ be in $F(A)$. We distinguish four cases:
\begin{enumerate}
\item if $u,v \notin A$, then $p_e(c)=(c_{(\{1,2,3,4\}\setminus A) \cup\{a\}}(u),c_{(\{1,2,3,4\}\setminus A) \cup\{a\}}(v))$,
\item if $u,v \in A$, then $p_e(c)=(c_A(u),c_A(v))\in \partial C_2(M)$, 
\item if $u \in A$ and $v \notin A$, then $p_e(c)=(c_{(\{1,2,3,4\}\setminus A) \cup\{a\}}(a),c_{(\{1,2,3,4\}\setminus A) \cup\{a\}}(v)) \in \check{C}_2(\check{M})$,
\item if $u \notin A$ and $v \in A$, then $p_e(c)=(c_{(\{1,2,3,4\}\setminus A) \cup\{a\}}(u), c_{(\{1,2,3,4\}\setminus A) \cup\{a\}}(a)) \in \check{C}_2(\check{M})$.
\end{enumerate}
For $j\in \mathfrak{S}_6(E(\Gamma))$, the integral $I(\Gamma,j,F(A))$ will now be denoted by $I(\Gamma,j,A)$. 

\begin{lem}\label{lem_degenerateW}
Let $\Gamma$ be in $\{W_1,W_2\}$. Let $j$ be in $\bijlab(E(\Gamma))$. Let $A$ be a subset of $\{1,2,3,4\}$ such that $|A|=3$. Then:
$$I\left(\Gamma,j,A\right)=0.$$
\end{lem}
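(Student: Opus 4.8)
The plan is to show that on the face $F(A)$ with $|A|=3$, the pulled-back form to be integrated vanishes for dimensional reasons, mimicking the counting argument already used in Lemma \ref{lem_inf_bis}. Write $A=\{1,2,3,4\}\setminus\{w\}$ for the single vertex $w$ not in $A$. Since $\Gamma\in\{W_1,W_2\}$ has a double edge between some pair of vertices (the $e_{13}$ and $e_{24}$ notation, together with the two-edge bundles $e_{34},e_{43}$), the key combinatorial observation is that when three vertices collapse, the edges with \emph{both} ends in $A$, i.e.\ the set $E_A(\Gamma)$, has at least four elements: indeed $\Gamma$ has six edges, $w$ has valence three, so at most three edges touch $w$, leaving at least three edges inside $A$; I would check case by case on $W_1,W_2$ and on the choice of $w$ that in fact $|E_A(\Gamma)|\geq 4$ because the double edge always lies inside $A$ (the two vertices joined by the double edge cannot both be separated by removing a single vertex $w$).

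The main step is then the factorization. As in the proof of Lemma \ref{lem_inf_bis}, for an edge $e\in E_A(\Gamma)$ with $e\in e_{uv}(\Gamma)$ and $u,v\in A$, the map $p_e$ restricted to $F(A)$ lands in $\partial C_2(M)$, more precisely in the $U\check M$ part, via case (2) of the description of $p_e$ on $F(A)$ above; hence $p_e$ factors through the projection of $F(A)$ onto the fiber $\check S_A(T_{c(a)}M)$, composed with the map $\check S_A(T_{c(a)}M)\to U(T_{c(a)}M)\cong S^2$ recording the direction from $c_A(u)$ to $c_A(v)$. Wait — here I must be careful: the propagating forms $\omega_{j(e)}$ are only pinned down as $G_M^*(\omega_{S^2})$ on $\partial C_2(M)\setminus U\check M$, not on $U\check M$ itself. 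So the naive ``pull-back of a $2$-form on $S^2$'' argument does not directly apply to the $E_A$ edges. Instead I would use that $F(A)$ fibers over the configuration space $\check C_{(\{1,2,3,4\}\setminus A)\cup\{a\}}(\check M)$, which here is $\check C_2(\check M)$ (two points: the collapsed cluster location $c(a)$ and the point $c(w)$), with fiber $\check S_A(T_{c(a)}M)$ of dimension $3|A|-4 = 5$. The edges in $E_A(\Gamma)$ all factor through the fiber $\check S_A(T_{c(a)}M)$ (a $5$-dimensional manifold), so the wedge $\bigwedge_{e\in E_A(\Gamma)} (\indi\times p_e)^*(\omega_{j(e)})$ is, on $[0,1]\times F(A)$, the pull-back under the projection $[0,1]\times F(A)\to [0,1]\times \check S_A(T_{c(a)}M)$ of a form of degree $2|E_A(\Gamma)|\geq 8 > 1+5$; since the target has dimension $6<8$, this form vanishes identically, hence so does $\bigwedge_{e\in E(\Gamma)}(\indi\times p_e)^*(\omega_{j(e)})$, and therefore $I(\Gamma,j,A)=0$.

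I expect the main obstacle to be the bookkeeping in the factorization: one must verify that \emph{each} edge of $E_A(\Gamma)$ genuinely factors through the fiber $\check S_A(T_{c(a)}M)$ alone (not through the base configuration of the cluster point and $c(w)$), which is exactly case (2) of the explicit description of $p_e$ on $F(A)$ — there $p_e(c)=(c_A(u),c_A(v))$ depends only on the fiber coordinate $c_A$, so this is clean. A secondary obstacle is the combinatorial check that $|E_A(\Gamma)|\geq 4$ for every $W_i$ and every three-element $A$; this is a small finite verification using Figure \ref{graphs}: removing any one of the four vertices of $W_1$ or $W_2$ deletes exactly the (at most three) edges incident to it, and since each vertex has degree three and the multigraph has six edges, at least three edges survive inside $A$, while the double edge is never fully removed (its two endpoints survive as long as $w$ is not one of them, and if $w$ is one of them the remaining graph still has the other double edge or enough internal edges) — in every case one lands with at least four edges inside $A$, and $2\cdot 4 = 8 > 6 = \dim([0,1]\times F(A))$ already suffices, so the precise value beyond $4$ is irrelevant. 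I would close by noting this is the same mechanism as Lemma \ref{lem_inf_bis}: a degenerate face where the integrand form has degree exceeding the dimension of the relevant quotient.
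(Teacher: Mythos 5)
There is a genuine gap: your key combinatorial claim $|E_A(\Gamma)|\geq 4$ is false. In a trivalent graph without looped edges the vertex $w\notin A$ is incident to exactly three edges (a double edge to a neighbour counts as two of them), so exactly three edges are deleted when passing to $A$ and $|E_A(\Gamma)|=3$ for every three-element $A$, for $W_1$ and $W_2$ alike. Hence the form coming from the edges of $E_A(\Gamma)$ has degree $6$, which does not exceed the dimension $1+5=6$ of your proposed target $[0,1]\times \check S_A(T_{c(a)}M)$, and the vanishing-by-degree argument collapses. (There is a secondary inaccuracy: for $e\in E_A(\Gamma)$ the map $p_e$ lands in $U\check M$, whose points record the base point $c(a)$ as well as the direction, so these $p_e$ factor through the pair (cluster point, fiber coordinate), a space of dimension $3+5=8$, not through the fiber alone; but even granting your smaller target, $6>6$ fails.) In fact no purely dimensional argument can prove this lemma: the full integrand has degree $12=\dim([0,1]\times F(A))$, the three edges inside $A$ factor through an $8$-dimensional space and the three edges leaving $A$ through the $6$-dimensional configuration space of $c(a)$ and $c(w)$, so nothing is forced to vanish by degree alone; and for $T_1,T_2$ the analogous faces have the same counts $|E_A(\Gamma)|=3$ yet do \emph{not} vanish individually --- they only cancel in pairs via Lemma \ref{lem_triangle} --- so any correct proof must use a feature specific to $W_1,W_2$.

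The paper uses exactly such a feature: for $\Gamma\in\{W_1,W_2\}$ and $|A|=3$, the edges inside $A$ form a double edge between two vertices $u_2,u_3$ together with a single edge between $u_1$ and $u_2$, and \emph{no} edge joins $u_1$ to $u_3$. Normalizing a representative $c_A$ by $c_A(u_2)=0$, one scales $c_A(u_1)$ by $\lambda$ while fixing $c_A(u_2)$ and $c_A(u_3)$; this is a nontrivial deformation in $F(A)$, yet every $p_e$ is unchanged (the directions of the double edge and of the edge $u_1u_2$ are scale-invariant, and the edges leaving $A$ depend only on $c(a)$ and $c(w)$) precisely because the edge $u_1u_3$ is absent. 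The resulting vector field lies in the kernel of all $d p_e$, and since the integrand has top degree on $[0,1]\times F(A)$, it vanishes identically. So the correct mechanism is the invariance-under-a-group-action degeneracy of Lemma \ref{lem_inf}, not the factorization-plus-degree count of Lemma \ref{lem_inf_bis} that you tried to transplant.
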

\begin{proof}
Let $u_1,u_2,u_3$ be the three vertices in $A$. Assume that they are as in Figure \ref{deg_W}. There are two edges between $u_2$ and $u_3$, and one edge between $u_1$ and $u_2$. Let $c$ be in $F(A)$. Let $c_A$ be a representative of the projection of $c$ on $\check{S}_A(T_{c(u_1)}M)$ such that $c_A(u_2)=0$. For $\lambda$ close to $1$, we define:
\begin{align*}
\lambda \cdot c_A \colon u_1 &\mapsto \lambda c_A(u_1) \\
u_2 &\mapsto 0 \\
u_3 &\mapsto c_A(u_3).
\end{align*}
Then the class of $\lambda \cdot c_A$ in $\check{S}_A(T_{c(u_1)}M)$ does not depend on the choice of $c_A$. Note moreover that if $\lambda\neq 1$, then $\lambda \cdot c_A$ and $c_A$ represent different elements of the configuration space. We define $\lambda \cdot c \in F(A)$ to be $(c_{(\{1,2,3,4\}\setminus A)\cup \{a\}},\lambda \cdot c_A)$. It is a non trivial action on $F(A)$. For all $e\in E(\Gamma)$ we have:
$$p_e(\lambda \cdot c)= p_e(c).$$
This follows from our description of the maps $p_e$ on $F(A)$, and from the fact that there is no edge connecting $u_1$ and $u_3$. Thus, taking the derivative at $\lambda=1$ shows that the intersection of the kernels of the maps $d_c p_e$ is non trivial on $[0,1]\times F(A)$. Moreover, the degree of the form $\bigwedge_{e\in E(\Gamma)} (\mathbf{1}_{\left[0,1\right]} \times p_e)^*(\omega_{j(e)})$ is the dimension of $[0,1]\times F(A)$. Therefore this form vanishes on $[0,1]\times F(A)$.
\end{proof}

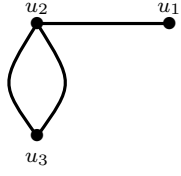
\begin{figure}
    \centering
\begin{tikzpicture}[scale=1]
\useasboundingbox (4.75,-0.8) rectangle (7,1.2);

\draw[very thick] (5,1) -- (6.76,1);
\draw[very thick] (5,-0.5) .. controls (5.5,0.2) ..(5,1);
\draw[very thick] (5,-0.5) .. controls (4.5,0.2) ..(5,1) ;

\draw (5,-0.8) node{\scriptsize$u_3$};
\draw (5,1.2) node{\scriptsize$u_2$};
\draw (6.76,1.2) node{\scriptsize$u_1$};

\draw (5,-0.5) node{$\bullet$};
\draw (5,1) node{$\bullet$};
\draw (6.76,1) node{$\bullet$}; 
\end{tikzpicture}
 \caption{Faces for $|A|=3$ and $\Gamma\in\{W_1,W_2\}$.}
    \label{deg_W}
\end{figure}

\begin{lem}\label{lem_degsansedge}
   Let $\Gamma$ be in $\{W_1,W_2\}$ and let $A$ be in $\{\{1,4\},\{2,3\}\}$. Let $j$ be in $\bijlab(E(\Gamma))$. We have:
   $$I(\Gamma,j,A)=0.$$
\end{lem}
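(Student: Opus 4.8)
The plan is to reuse the degenerate-face mechanism from the proofs of Lemmas \ref{lem_inf} and \ref{lem_degenerateW}: exhibit a nontrivial direction along $F(A)$ on which every projection $p_e$ is locally constant, so that the degree-$12$ form $\bigwedge_{e \in E(\Gamma)}(\indi \times p_e)^*(\omega_{j(e)})$ on the $12$-dimensional manifold $[0,1]\times F(A)$ vanishes pointwise, hence $I(\Gamma,j,A)=0$.

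First I would record the combinatorial input: for $\Gamma\in\{W_1,W_2\}$ and $A\in\{\{1,4\},\{2,3\}\}$, no edge of $\Gamma$ has both endpoints in $A$, i.e. $E_A(\Gamma)=\emptyset$. This is read off Figure \ref{graphs}: the edges of $W_1$ — and hence of $W_2$, which differs only by edge orientations — join $1$ to $2$, $3$ to $4$, $1$ to $3$, and $2$ to $4$, so the pairs $\{1,4\}$ and $\{2,3\}$ carry no edge. Next I would invoke the explicit description of $p_e$ on $F(A)$ given before Lemma \ref{lem_degenerateW}. Writing $c=(c_{(\{1,2,3,4\}\setminus A)\cup\{a\}},c_A)$ for a point of $F(A)$, cases (1), (3), (4) of that description show that $p_e(c)$ depends only on the first component for every $e\in E(\Gamma)$; case (2), the only one that would see the internal configuration $c_A\in\check{S}_A(T_{c(a)}M)$, does not occur precisely because $E_A(\Gamma)=\emptyset$. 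Equivalently, all the maps $p_e|_{F(A)}$ factor through the projection $f\colon F(A)\to\check{C}_{(\{1,2,3,4\}\setminus A)\cup\{a\}}(\check{M})$, whose fibre is $\check{S}_A(T_{c(a)}M)\cong S^2$.

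The conclusion is then a dimension count as in Lemma \ref{lem_inf_bis}. Since $|A|=2$, the base $\check{C}_{(\{1,2,3,4\}\setminus A)\cup\{a\}}(\check{M})$ is a configuration space of three points in $\check{M}$, of dimension $9$, so $[0,1]\times\check{C}_{(\{1,2,3,4\}\setminus A)\cup\{a\}}(\check{M})$ has dimension $10$; the form $\bigwedge_{e\in E(\Gamma)}(\indi\times p_e)^*(\omega_{j(e)})$ on $[0,1]\times F(A)$ is the pull-back by $\indi\times f$ of a form of degree $2|E(\Gamma)|=12>10$, hence vanishes. Alternatively one may argue, as in Lemmas \ref{lem_inf} and \ref{lem_degenerateW}, that the tangent space to the $S^2$-fibre lies in $\bigcap_{e\in E(\Gamma)}\ker d_c p_e$, which is therefore nontrivial, while the integrand has degree equal to $\dim([0,1]\times F(A))$. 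I do not expect a genuine obstacle here; the only thing needing care is the verification that case (2) of the description of $p_e$ on $F(A)$ is vacuous, which is exactly the hypothesis $A\in\{\{1,4\},\{2,3\}\}$.
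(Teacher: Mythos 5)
Your proof is correct and follows essentially the same route as the paper's: you note that $E_A(\Gamma)=\emptyset$, so every $p_e$ restricted to $F(A)$ factors through the projection $f$ onto $\check{C}_{(\{1,2,3,4\}\setminus A)\cup\{a\}}(\check{M})$, and then a dimension count (pulling back a degree-$12$ form from the $10$-dimensional space $[0,1]\times\check{C}_{(\{1,2,3,4\}\setminus A)\cup\{a\}}(\check{M})$) forces the integrand to vanish. The paper's argument is exactly this factorization plus dimension comparison, so no discrepancy.
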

\begin{proof}
    Let $a$ be a vertex in $A$. Let $f$ be the projection from $F(A)$ to $\check{C}_{(\{1,2,3,4\}-A)\cup \{a\}}(\check{M})$. Note that $E_A(\Gamma)$ is empty. Our description of the maps $p_e$ on $F(A)$ shows that all those maps factor through $f$. We write, for every $e\in E(\Gamma)$:
    $$p_e=p_e' \circ f.$$
  We have:
    $$\bigwedge_{e\in E(\Gamma)} (\mathbf{1}_{\left[0,1\right]} \times p_e)^*(\omega_{j(e)})=(\indi\times f)^*\left(\bigwedge_{e\in E(\Gamma)} (\mathbf{1}_{\left[0,1\right]} \times p_e')^*(\omega_{j(e)})\right).$$
As the dimension of $\check{C}_{(\{1,2,3,4\}\setminus A)\cup \{a\}}(\check{M})$ is smaller than the dimension of $F(A)$, we find that this form vanishes.
\end{proof}

\subsection{Reversing all the edges}

The next lemma explains how to cancel integrals over the anomalous face in our case. The same proof can be used in even degree. It does not apply in odd degree. We use the involution $\mathcal{I}$ that exchanges $T_1$ (resp. $W_1$) with $T_2$ (resp. $W_2$). Let $\Gamma$ be in $\{T_1,T_2,W_1,W_2\}$. Note that $\mathcal{I}$ acts by reversing the orientation of all edges. Let $\I \colon E(\Gamma)\rightarrow E(\mathcal{I}(\Gamma))$ be a bijection that sends $e_{uv}(\Gamma)$ to $e_{vu}(\I(\Gamma))$ for $u\neq v \in \{1,2,3,4\}$. 

\begin{lem} \label{lem_anomalous}
Let $\Gamma$ be in $\{T_1,W_1\}$. Let $j$ be in $\bijlab(E(\Gamma))$. We have:
$$
I(\Gamma,j,\{1,2,3,4\}) +I(\mathcal{I}(\Gamma),j\circ\I^{-1}, \{1,2,3,4\})=0.
$$
\end{lem}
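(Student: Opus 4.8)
The plan is to realise both integrals in the statement as integrals over the same oriented manifold $[0,1]\times F(\{1,2,3,4\})$, differing only in which $2$-forms are pulled back, and then to relate the two integrands by an orientation-reversing diffeomorphism of $F(\{1,2,3,4\})$. First I would recall from Definition \ref{def_facepasinf} (with $A=\{1,2,3,4\}$ and $a=1$) that the anomalous face fibres over $\check{C}_{\{1\}}(\check{M})=\check{M}$, the fibre over $m$ being $\check{S}_{\{1,2,3,4\}}(T_m M)$; I denote by $[m;c]$ the point of $F(\{1,2,3,4\})$ represented by $m\in\check{M}$ together with an injection $c\colon\{1,2,3,4\}\to T_m M\cong\R^3$, two such $c$ giving the same point exactly when they differ by a translation and a positive dilation. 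I then define $\sigma\colon F(\{1,2,3,4\})\to F(\{1,2,3,4\})$ by $\sigma([m;c])=[m;-c]$, where $-c$ is the configuration $i\mapsto-c(i)$; this is a well-defined smooth involution, since negation commutes with translations and positive dilations.

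The second step is to check the intertwining relation $p_{\I(e)}|_{F(\{1,2,3,4\})}=p_e|_{F(\{1,2,3,4\})}\circ\sigma$ for every $e\in E(\Gamma)$. Since $\I(e)$ runs from $v$ to $u$ whenever $e\in e_{uv}(\Gamma)$, one has $p_{\I(e)}=\iota\circ p_e$ on all of $C_4(M)$, where $\iota$ denotes the swap involution of $C_2(M)$. On the anomalous face, $p_e([m;c])=(m,[c(v)-c(u)])$ lies in the part $U\check{M}$ of $\partial C_2(M)$, and under the identification of $U\check{M}$ with the unit normal bundle of $\Delta_{\check{M}}$ in $\check{M}^2$ via $[(x,y)]\mapsto[y-x]$, the restriction of $\iota$ to $U\check{M}$ is the fibrewise antipodal map $[w]\mapsto[-w]$; hence $p_e(\sigma([m;c]))=(m,[c(u)-c(v)])=\iota(p_e([m;c]))=p_{\I(e)}([m;c])$. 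Granting this, I would reindex the second integral by $e\mapsto\I(e)$, note that the forms match because $(j\circ\I^{-1})(\I(e))=j(e)$, and use $\indi\times(p_e\circ\sigma)=(\indi\times p_e)\circ(\indi\times\sigma)$ together with the compatibility of pullback with wedge products to rewrite
$$I\bigl(\mathcal{I}(\Gamma),j\circ\I^{-1},\{1,2,3,4\}\bigr)=\int_{[0,1]\times F(\{1,2,3,4\})}(\indi\times\sigma)^*\Bigl(\bigwedge_{e\in E(\Gamma)}(\indi\times p_e)^*(\omega_{j(e)})\Bigr).$$
By the change-of-variables formula the right-hand side is $\varepsilon\cdot I(\Gamma,j,\{1,2,3,4\})$, where $\varepsilon=+1$ if $\indi\times\sigma$ preserves orientation and $\varepsilon=-1$ otherwise.

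The last step, which I expect to be the main obstacle, is to verify that $\varepsilon=-1$, that is, that $\sigma$ is orientation-reversing. Normalising the configurations in a fibre by $\sum_i c(i)=0$ and $\sum_i\|c(i)\|^2=1$ identifies $\check{S}_{\{1,2,3,4\}}(T_m M)$ with an open subset of the unit sphere $S^{8}$ of the $9$-dimensional space $\{c\colon\sum_i c(i)=0\}$, and under this identification $\sigma$ becomes the antipodal map of $S^{8}$, which is orientation-reversing because $8$ is even; since $\sigma$ covers the identity of $\check{M}$, it is orientation-reversing on $F(\{1,2,3,4\})$, and therefore so is $\indi\times\sigma$. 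The delicate point is the orientation bookkeeping: one must track the orientation induced on the anomalous face and the sign by which the usual identifications of $T_xS^{8}$ and $T_{-x}S^{8}$ differ, and it is exactly the parity of the fibre dimension $3\cdot 4-4=8$, even in this degree-two situation, that forces $\varepsilon=-1$; the same parity count is what fails in odd degree. Combining the three steps yields $I(\mathcal{I}(\Gamma),j\circ\I^{-1},\{1,2,3,4\})=-I(\Gamma,j,\{1,2,3,4\})$, which is the statement of the lemma.
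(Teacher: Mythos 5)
Your proposal is correct and follows essentially the same route as the paper: the fibrewise negation map on the anomalous face, the intertwining $p_{\I(e)}\circ f=p_e$, and the sign coming from the antipodal involution of the (open subset of the) $8$-sphere fibre being orientation-reversing. Your extra verifications (the explicit computation of $p_e$ on $U\check{M}$ via $\iota$, and the normalisation identifying the fibre inside $S^8$) only make explicit what the paper's proof asserts directly.
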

\begin{proof}
The face $F(\{1,2,3,4\})$ fibers over $\check{M}$, and its fiber over $m\in\check{M}$ is $\check{S}_{\{1,2,3,4\}}(T_{m}M)$. Define:
$$
f \colon
\left\{
\begin{array}{lcl}
 F(\{1,2,3,4\}) &\rightarrow& F(\{1,2,3,4\}) \\
(m\in \check{M},c \colon \{1,2,3,4\} \rightarrow T_{m}M) & \mapsto &(m, -c)
\end{array}
\right.
$$
Then $f$ is orientation-reversing. Indeed, the fiber $\check{S}_{\{1,2,3,4\}}(T_{m}M)$ is homeomorphic to $S^8$ and $f$ acts as the antipodal involution on it. For $e\in E(\Gamma)$ we have $p_{\I(e)}\circ f=p_e$. Let $j'$ be $j \circ \I^{-1}$. We get:
\begin{align*}
I(\mathcal{I}(\Gamma),j',\{1,2,3,4\})
&= \int_{[0,1] \times F(\{1,2,3,4\})}  \bigwedge_{e \in E(\Gamma)} (\mathbf{1}_{\left[0,1\right]}\times p_{\I(e)})^*(\omega_{j'\circ\I(e)}) \\
&= - \int_{[0,1] \times F(\{1,2,3,4\})}  (\indi \times f)^*\left(\bigwedge_{e \in E(\Gamma)} (\mathbf{1}_{\left[0,1\right]}\times p_{\I(e)})^*(\omega_{j'\circ \mathcal{I}(e)})\right) \\
&=-I(\Gamma,j'\circ \mathcal{I},\{1,2,3,4\}) \\
&=-I(\Gamma,j,\{1,2,3,4\}).
\end{align*}
\end{proof}

\begin{nota}\label{not_intiota}
Recall the involution $\iota$ of $C_2(M)$ from Remark \ref{rque_Lsecond}. We define the following integrals:
$$I_\iota(\Gamma,j,A)=\int_{[0,1]\times F(A)} \bigwedge_{e\in E(\Gamma)} (\indi\times p_e)^*\bigl((\indi \times \iota)^*(\omega_{j(e)})\bigr).$$
\end{nota}
We will use the next lemma to deduce cancellations of terms involving $T_2$ and $W_2$ from similar cancellations involving $T_1$ and $W_1$.
\begin{lem}\label{lem_reverse}
Let $\Gamma\in \{T_1,W_1\}$. Let $j$ be in $\bijlab(E(\Gamma))$. Let $A$ is a subset of $\{1,2,3,4\}$ such that $|A|\geq 2$. We have:
   $$I(\I(\Gamma),j\circ \I^{-1},A)=I_{\iota}(\Gamma,j,A).$$
\end{lem}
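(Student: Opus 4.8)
The plan is to relate the map $p_{\I(e)}$ on $F(A)$ to $\iota \circ p_e$ on $F(A)$, and then to change variables in the integral. The key observation is the following: the bijection $\I$ sends each edge $e \in e_{uv}(\Gamma)$ to the edge $\I(e) \in e_{vu}(\I(\Gamma))$, i.e.\ the same pair of vertices but with reversed orientation. Recalling Definition \ref{def_proj} and the extended projections, $p_e$ on $C_4(M)$ is the extension of $c \mapsto (c(u),c(v))$, while $p_{\I(e)}$ is the extension of $c \mapsto (c(v),c(u))$. Hence as maps $C_4(M) \to C_2(M)$ we have $p_{\I(e)} = \iota \circ p_e$, because $\iota$ is precisely the smooth extension of the coordinate-exchange involution of $\check{C}_2(\check{M})$. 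This identity, and its compatibility with the stratification, is the one nontrivial input; once it is in hand the rest is bookkeeping.

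First I would record that $p_{\I(e)} = \iota \circ p_e$ as smooth maps $C_4(M) \to C_2(M)$, hence also $(\indi \times p_{\I(e)}) = (\indi \times \iota) \circ (\indi \times p_e)$ on $[0,1] \times C_4(M)$, and in particular on $[0,1] \times F(A)$. Next I would substitute this into the definition of $I(\I(\Gamma), j \circ \I^{-1}, A)$:
\begin{align*}
I(\I(\Gamma), j\circ\I^{-1}, A)
&= \int_{[0,1]\times F(A)} \bigwedge_{e'\in E(\I(\Gamma))} (\indi\times p_{e'})^*\bigl(\omega_{(j\circ\I^{-1})(e')}\bigr).
\end{align*}
Re-indexing the product over $E(\I(\Gamma))$ via the bijection $\I\colon E(\Gamma)\to E(\I(\Gamma))$, so that $e' = \I(e)$ and $(j\circ\I^{-1})(e') = j(e)$, this becomes
\begin{align*}
I(\I(\Gamma), j\circ\I^{-1}, A)
&= \int_{[0,1]\times F(A)} \bigwedge_{e\in E(\Gamma)} (\indi\times p_{\I(e)})^*\bigl(\omega_{j(e)}\bigr) \\
&= \int_{[0,1]\times F(A)} \bigwedge_{e\in E(\Gamma)} (\indi\times p_e)^*\bigl((\indi\times\iota)^*(\omega_{j(e)})\bigr),
\end{align*}
which is exactly $I_\iota(\Gamma, j, A)$ by Notation \ref{not_intiota}. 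Note that no orientation sign appears: we are not changing the domain of integration, only rewriting the pulled-back form, so $[0,1]\times F(A)$ keeps its orientation throughout. (One should double-check that re-indexing a wedge product of \emph{two}-forms by a bijection of the index set introduces no sign — it does not, since two-forms commute.)

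The main obstacle is justifying the identity $p_{\I(e)} = \iota\circ p_e$ at the level of the compactified configuration spaces, not merely on the open parts $\check{C}_4(\check M)$ and $\check{C}_2(\check M)$. On the open strata the identity is the tautology $(c(v),c(u)) = \iota\bigl((c(u),c(v))\bigr)$. To upgrade it to $C_4(M) \to C_2(M)$ one invokes that both $p_{\I(e)}$ and $\iota \circ p_e$ are smooth (the former by the functoriality of the compactification recalled at the end of Section \ref{sec_compactification}, the latter because $\iota$ and $p_e$ are smooth) and that they agree on the dense subset $\check{C}_4(\check M)$; two continuous maps to a Hausdorff space agreeing on a dense set are equal. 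This is a short continuity/density argument, and it is essentially the only thing that needs care. The restriction to $[0,1]\times F(A)$ and the substitution into the integral are then formal.
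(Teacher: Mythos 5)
Your proposal is correct and follows the same route as the paper: the paper's proof consists exactly of the identity $p_{\I(e)}=\iota\circ p_e$ followed by the substitution into the integral, which you carry out (with the re-indexing and the density argument for the compactified spaces made explicit).
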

\begin{proof}
Let $e$ be an edge of $\Gamma$. Then $p_{\I(e)}=\iota \circ p_e$.
Thus we get the required equality.
\end{proof}

\subsection{Double-edge faces}
We call \emph{double-edge faces} the pairs $(W_1,F(\{1,2\}))$, $(W_1,F(\{3,4\}))$, $(W_2,F(\{1,2\}))$, and $(W_2,F(\{3,4\}))$. In this section, we show that the contribution of double-edge faces cancel.
\begin{figure}[H]
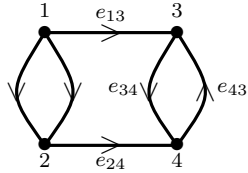

    \centering
    \doubleedge
    \caption{A name for the edges of $W_1$.}
    \label{doubleedge}
\end{figure}

\begin{lem} \label{lem_doubleedge1}
Let $j$ be in $\bijlab(E(W_1))$. Let $j'\in \bijlab(E(W_1))$ be the bijection only differing from $j$ by $j'(e_{34})=j(e_{43})$ and $j'(e_{43})=j(e_{34})$.
We have:
$$I(W_1,j,\{3,4\})=-I(W_1,j',\{3,4\}),$$
and:
$$I_\iota(W_1,j,\{3,4\})=-I_\iota(W_1,j',\{3,4\}),$$
with Notation \ref{not_intiota}.
\end{lem}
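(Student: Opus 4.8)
The statement concerns the face $F(\{3,4\})$ of $C_4(M)$, on which the two points labelled $3$ and $4$ collide. The key geometric input is that $W_1$ has two edges between the vertices $3$ and $4$ (called $e_{34}$ and $e_{43}$ in Figure \ref{doubleedge}), oriented in opposite directions. The plan is to exhibit an orientation-reversing self-diffeomorphism of $[0,1]\times F(\{3,4\})$ under which the integrand of $I(W_1,j,\{3,4\})$ pulls back to the integrand of $I(W_1,j',\{3,4\})$, which forces the two integrals to be opposite. I would then repeat the argument verbatim with each $\omega_{j(e)}$ replaced by $(\indi\times\iota)^*(\omega_{j(e)})$ to obtain the second equality, since nothing in the argument uses any special property of the propagating forms.

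\textbf{Key steps.} First I would recall from the beginning of Section \ref{sec_invariance} (the four-case description of $p_e$ on $F(A)$) that on $F(\{3,4\})$ the edges $e_{34},e_{43}$ map, via $p_{e_{34}},p_{e_{43}}$, into $\partial C_2(M)$, more precisely into the part $U\check M=F(\{1,2\})$ of $\partial C_2(M)$, landing at the pair $(c_A(3),c_A(4))$ and $(c_A(4),c_A(3))$ respectively, where $c_A\colon\{3,4\}\to T_{c(a)}M$ is the infinitesimal configuration. Under the identification of $U\check M$ with the unit normal bundle to the diagonal, swapping the two infinitesimal points corresponds to the antipodal involution $\iota$ on the $S^2$-fibre of $U\check M$; concretely $p_{e_{43}}=\iota\circ p_{e_{34}}$ on $F(\{3,4\})$. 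Second, I would define the self-map $g$ of $F(\{3,4\})$ that fixes the base configuration $c_{\{1,2\}\cup\{a\}}$ and acts on the fibre $\check S_{\{3,4\}}(T_{c(a)}M)$ by interchanging the two points $c_A(3)$ and $c_A(4)$; equivalently, on the $S^2$ of directions, $g$ acts by the antipodal map, hence $g$ is orientation-reversing on $F(\{3,4\})$ (and therefore on $[0,1]\times F(\{3,4\})$). Third, I would check how $g$ interacts with the projections: $p_{e_{34}}\circ g=p_{e_{43}}$, $p_{e_{43}}\circ g=p_{e_{34}}$, and $p_e\circ g=p_e$ for every other edge $e$ of $W_1$ (the vertices $1$ and $2$ are untouched, and any edge between $\{1,2\}$ and $\{3,4\}$ lands, by case (3) or (4) of the description of $p_e$ on $F(A)$, at $(\ldots,c(a))$ which does not see which of $3,4$ the infinitesimal label carries). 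Fourth, pulling back the form $\bigwedge_{e\in E(W_1)}(\indi\times p_e)^*(\omega_{j(e)})$ by $\indi\times g$ thus permutes only the two factors indexed by $e_{34}$ and $e_{43}$; since these are $2$-forms they commute in the wedge product, so the pulled-back integrand equals the integrand of $I(W_1,j',\{3,4\})$ exactly. Changing variables by the orientation-reversing map $\indi\times g$ then gives $I(W_1,j,\{3,4\})=-I(W_1,j',\{3,4\})$. Finally, the same computation with $(\indi\times\iota)^*(\omega_{j(e)})$ in place of $\omega_{j(e)}$ yields the second equality, using Notation \ref{not_intiota}.

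\textbf{Main obstacle.} The only delicate point is the orientation bookkeeping: one must verify carefully that the map $g$ on $F(\{3,4\})$ is genuinely orientation-reversing. This comes down to the fact that the fibre $\check S_{\{3,4\}}(T_mM)$ is the set of pairs of distinct points in $T_mM\cong\R^3$ up to translation and dilation, which deformation retracts onto (and for orientation purposes behaves like) the sphere $S^2$ of directions $\tfrac{c_A(4)-c_A(3)}{\|c_A(4)-c_A(3)\|}$, on which swapping $3$ and $4$ is the antipodal map $\iota_{S^2}$; since $\iota_{S^2}$ reverses orientation on $S^2$, so does $g$ on the whole face, and adding the $[0,1]$-factor does not change the sign. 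Once this sign is pinned down, the rest is the routine change-of-variables argument above, entirely parallel in spirit to Lemmas \ref{lem_anomalous} and \ref{lem_reverse}.
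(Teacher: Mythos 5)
Your proposal is correct and follows essentially the same route as the paper: the paper's proof uses the map $(c_0,c_1)\mapsto(c_0,-c_1)$ on $F(\{3,4\})$, which, in the fibre $\check{S}_{\{3,4\}}(T_{c_0(4)}M)\cong S^2$ (quotient by translation and dilation), is exactly your point-swapping map acting as the antipodal involution, hence orientation-reversing, with the same identities $p_{e_{34}}\circ f=p_{e_{43}}$, $p_{e_{43}}\circ f=p_{e_{34}}$, $p_e\circ f=p_e$ otherwise, followed by the same change of variables and the verbatim repetition for the $I_\iota$ version. The only cosmetic imprecision is saying the fibre \say{deformation retracts onto} $S^2$ — it is canonically equal to the sphere of directions — but this does not affect the argument.
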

\begin{proof}
Let us prove the first equality. Define the following diffeomorphism:
$$
f \colon
\left\{
\begin{array}{lcl}
  F(\{3,4\}) &\rightarrow &F(\{3,4\}) \\
 (c_0\colon \{1,2,4\} \rightarrow \check{M},c_1 \colon \{3,4\}\rightarrow T_{c_0(4)}M) &\mapsto &(c_0,-c_1).
\end{array}
\right.
$$
The diffeomorphism $f$ is orientation-reversing. Indeed, the fiber $\check{S}_{\{3,4\}}(T_{c_0(4)}M)$ is homeomorphic to $S^2$ and $f$ acts on it as the antipodal involution. On $F(\{3,4\})$, we have:
\begin{align*}
 &p_{e_{43}}\circ f = p_{e_{34}}, \\
 &p_{e_{34}}\circ f = p_{e_{43}}, \\
 &p_e \circ f=p_e \text{ for } e\in E(\Gamma)-\{e_{43},e_{34}\}.
\end{align*}
This implies that for every edge $e$ we have: 
$$\bigl((\mathbf{1}_{[0,1]} \times p_e) \circ (\mathbf{1}_{[0,1]}\times f)\bigr)^*(\omega_{j'(e)})=(\mathbf{1}_{[0,1]} \times p_e)^*(\omega_{j(e)}),$$
 which proves the first equality lemma, using $\indi\times f$ as a change of variables. The same proof applies for the second equality.
\end{proof}

\begin{lem}\label{lem_doubleedge2}
Let $j$ be in $\bijlab(E(W_1))$. Let $j'\in\bijlab(E(W_1))$ be the bijection differing from $j$ by:
\begin{align*}
 j'(e_{24})&=j(e_{13}), \\
 j'(e_{13})&=j(e_{24}), \\
 j'(e_{43})&=j(e_{34}), \\
 j'(e_{34})&=j(e_{43}).
 \end{align*}
We have:
$$I\left(W_1,j,\{1,2\}\right)=-I(W_1,j',\{1,2\}),$$
and:
$$I_\iota\left(W_1,j,\{1,2\}\right)=-I_\iota(W_1,j',\{1,2\}),$$
with Notation \ref{not_intiota}.
\end{lem}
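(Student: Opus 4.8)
The plan is to mimic the proof of Lemma \ref{lem_doubleedge1}, but using a more elaborate involution of the face $F(\{1,2\})$ which, besides negating the directions of the collapsed points, also rotates or reflects the whole configuration so as to swap the roles of the two ``sides'' of $W_1$. Recall from Figure \ref{doubleedge} that $W_1$ has the vertices $1,3$ joined by the single edge $e_{13}$ (top), the vertices $2,4$ joined by the single edge $e_{24}$ (bottom), and the vertices $3,4$ joined by the double edge $\{e_{34},e_{43}\}$ (right side); the left side is a double edge between $1$ and $2$. The face $F(\{1,2\})$ fibers over $\check{C}_{\{2,3,4\}}(\check{M})$ with fiber $\check{S}_{\{1,2\}}(T_{c(2)}M)\cong U_{c(2)}M\cong S^2$ over a configuration $c\colon\{2,3,4\}\to\check{M}$. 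The four maps $p_e$ on $F(\{1,2\})$ take the following form (by the case analysis preceding Lemma \ref{lem_degenerateW}): $p_{e_{13}}$ and $p_{e_{24}}$ are ``collapsed'' pairs landing in $\partial C_2(M)$ — more precisely $p_{e_{12}}$-type maps reading off the unit direction in the $S^2$-fiber — while $p_{e_{34}}$ and $p_{e_{43}}$ both equal the constant-in-the-fiber map $c\mapsto(c(3),c(4))$, pulled back from $\check{C}_2(\check{M})$.

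First I would define the orientation-reversing diffeomorphism
$$
f\colon
\left\{
\begin{array}{lcl}
F(\{1,2\}) &\rightarrow& F(\{1,2\}) \\
(c_0\colon\{2,3,4\}\to\check{M},\,c_1\colon\{1,2\}\to T_{c_0(2)}M) &\mapsto& (c_0,-c_1),
\end{array}
\right.
$$
exactly as in Lemma \ref{lem_doubleedge1}: its effect on the $S^2$-fiber $\check{S}_{\{1,2\}}(T_{c_0(2)}M)$ is the antipodal involution, hence $f$ reverses orientation. The key computation is the behaviour of the $p_e$'s under $f$. On the fiber, reversing $c_1$ swaps the direction ``from $1$ to $2$'' with the direction ``from $2$ to $1$'', so the map extracting $G_M\circ p_{e_{13}}$ (which reads the $1$-to-$2$ direction, since in $W_1$ the edge $e_{13}$ leaves the collapsing cluster at vertex $1$) becomes the map reading the $2$-to-$1$ direction, i.e. it matches $G_M\circ p_{e_{24}}$ precomposed with the relevant factor — here I would check orientations carefully using the antipodal map $\iota_{S^2}$ built into Definition \ref{def_gauss} and the identification $[(x,y)]\mapsto[y-x]$ of the normal bundle. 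The upshot I expect is
$$
p_{e_{13}}\circ f=p_{e_{24}},\qquad p_{e_{24}}\circ f=p_{e_{13}},\qquad p_{e_{34}}\circ f=p_{e_{34}},\qquad p_{e_{43}}\circ f=p_{e_{43}}
$$
on $F(\{1,2\})$. Wait — this does not by itself produce the $e_{34}\leftrightarrow e_{43}$ swap demanded by $j'$; that swap must come for free from the fact that $p_{e_{34}}$ and $p_{e_{43}}$ are literally the same map on this face, so relabelling which of the two equal factors carries which form is harmless. Combining, $\bigl((\mathbf{1}_{[0,1]}\times p_e)\circ(\mathbf{1}_{[0,1]}\times f)\bigr)^*(\omega_{j'(e)})=(\mathbf{1}_{[0,1]}\times p_e)^*(\omega_{j(e)})$ for every $e\in E(W_1)$, and the change of variables $\indi\times f$ (orientation-reversing) yields $I(W_1,j,\{1,2\})=-I(W_1,j',\{1,2\})$. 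The second equality follows verbatim with $\omega_{j(e)}$ replaced by $(\indi\times\iota)^*\omega_{j(e)}$, since $\iota$ commutes with the construction.

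The main obstacle I anticipate is the orientation/sign bookkeeping in the claim $p_{e_{13}}\circ f=p_{e_{24}}$: one must be sure that negating the fiber coordinate really implements the $1\leftrightarrow 2$ exchange compatibly with the edge-orientations of $W_1$ as drawn (the direction of $e_{13}$ is ``out of'' vertex $1$ and of $e_{24}$ ``out of'' vertex $2$, so after negation the two boundary-direction maps genuinely coincide rather than differing by an antipode). A secondary subtlety is confirming that $f$ is well-defined on the fibered product — i.e. that negation descends to $\check{S}_{\{1,2\}}(T_{c_0(2)}M)$, the space of injections modulo translation and dilation — which holds because negation commutes with dilation and fixes (the image of) the basepoint. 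Once these points are pinned down, the rest is the same bookkeeping as in Lemma \ref{lem_doubleedge1}.
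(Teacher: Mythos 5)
Your plan breaks down at the very first geometric step: you have misidentified which edge maps read the fiber of $F(\{1,2\})$ and which read the base. By the case analysis preceding Lemma \ref{lem_degenerateW}, on the face $F(A)$ with $A=\{1,2\}$ the only edges whose maps $p_e$ see the fiber $\check{S}_{\{1,2\}}(T_{c_0(a)}M)$ are the edges with \emph{both} endpoints in $A$, i.e.\ the two left-hand edges of $W_1$ joining the vertices $1$ and $2$. The edges $e_{13}$ and $e_{24}$ have exactly one endpoint in $A$, so on $F(\{1,2\})$ they become base maps $c\mapsto(c_0(a),c_0(3))$ and $c\mapsto(c_0(a),c_0(4))$ landing in $\check{C}_2(\check{M})$; they are not ``collapsed pairs reading the unit direction in the $S^2$-fiber''. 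Consequently your involution $(c_0,c_1)\mapsto(c_0,-c_1)$ fixes $p_{e_{13}}$, $p_{e_{24}}$, $p_{e_{34}}$, $p_{e_{43}}$ and instead exchanges the two maps attached to the left double edge; the change of variables it provides proves a relation in which the labels of the two edges between $1$ and $2$ are swapped, not the relation $e_{13}\leftrightarrow e_{24}$, $e_{34}\leftrightarrow e_{43}$ demanded by $j'$. Your fallback remark that the $e_{34}\leftrightarrow e_{43}$ swap ``comes for free because $p_{e_{34}}$ and $p_{e_{43}}$ are literally the same map on this face'' is also false: $p_{e_{34}}(c)=(c_0(3),c_0(4))$ while $p_{e_{43}}(c)=(c_0(4),c_0(3))$, so they differ by the coordinate exchange $\iota$ of $C_2(M)$ and the relabelling is not harmless.

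The argument that does work (and is the paper's) uses a different involution of $F(\{1,2\})$: keep the fiber coordinate and the base point of the cluster fixed, and swap the positions of the points $3$ and $4$ in the base configuration. This exchanges $p_{e_{13}}$ with $p_{e_{24}}$ and $p_{e_{34}}$ with $p_{e_{43}}$, leaves the maps of the two edges in $E_{\{1,2\}}(W_1)$ (on which $j'=j$) untouched, and is orientation-reversing because it transposes two $3$-dimensional factors of the base. The change of variables then gives both asserted equalities, the $I_\iota$ case verbatim. So the missing idea is precisely this base-level transposition; the fiber antipode you propose is the right tool for Lemma \ref{lem_doubleedge1} but cannot produce the label change required here.
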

\begin{proof}
 Define the following diffeomorphism:
$$
f \colon
\left\{
\begin{array}{lcl}
F(\{1,2\}) &\rightarrow &F(\{1,2\}) \\
(c_0 \colon \{1,3,4\} \rightarrow \check{M}, c_1 \colon \{1,2\} \rightarrow T_{c_0(1)}M)
&\mapsto&
(c_0',c_1),
\end{array}
\right.
$$
where:
$$
c_0' :
\left\{
\begin{array}{lcl}
 3 &\mapsto& c_0(4) \\
 4 &\mapsto& c_0(3) \\
 1 &\mapsto& c_0(1),
\end{array}
\right..
$$
Then $f$ is orientation-reversing, and furthermore:
\begin{align*}
 p_{e_{34}} \circ f &= p_{e_{43}}, \\
 p_{e_{43}} \circ f &= p_{e_{34}}, \\
 p_{e_{13}} \circ f &= p_{e_{24}},\\
 p_{e_{24}} \circ f &= p_{e_{13}}.
\end{align*}
Note that these equalities correspond to the relation between the labellings $j$ and $j'$. That being checked, and using the change of variables $\indi\times f$, we find:
$$I(W_1,j',\{1,2\})=-I(W_1,j,\{1,2\}).$$
\end{proof}
Using Lemma \ref{lem_reverse}, we now get similar equalities for $W_2$.
\begin{lem}\label{lem_doubleedge1_d}
Let $j$ be in $\bijlab(E(W_2))$. Let $j'\in\bijlab(E(W_2))$ be the bijection only differing from $j$ by $j'(e_{34})=j(e_{43})$ and $j'(e_{43})=j(e_{34})$.
We have:
$$I(W_2,j,\{3,4\})=-I(W_2,j',\{3,4\}).$$
\end{lem}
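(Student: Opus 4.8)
The plan is to deduce the statement from Lemma~\ref{lem_doubleedge1} by transporting everything along the edge-reversing correspondence of Lemma~\ref{lem_reverse}. Recall that $W_2=\mathcal{I}(W_1)$ and that $\I$ induces a bijection $E(W_1)\to E(W_2)$ sending $e_{uv}(W_1)$ to $e_{vu}(W_2)$. Given $j\in\bijlab(E(W_2))$, I would set $k=j\circ\I\in\bijlab(E(W_1))$, so that $k\circ\I^{-1}=j$. Applying Lemma~\ref{lem_reverse} with $\Gamma=W_1$ and $A=\{3,4\}$ then gives
$$I(W_2,j,\{3,4\})=I_\iota(W_1,k,\{3,4\}),$$
and likewise, with $k'=j'\circ\I$ (so that $k'\circ\I^{-1}=j'$),
$$I(W_2,j',\{3,4\})=I_\iota(W_1,k',\{3,4\}).$$

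Next I would check that $k'$ is precisely the labelling of $E(W_1)$ obtained from $k$ by transposing the values on $e_{34}$ and $e_{43}$. Since $\I$ exchanges $e_{34}(W_1)$ with $e_{43}(W_2)$ and $e_{43}(W_1)$ with $e_{34}(W_2)$, while matching the remaining edges of $W_1$ with edges of $W_2$ on which $j$ and $j'$ agree, one computes $k'(e_{34}(W_1))=j'(e_{43}(W_2))=j(e_{34}(W_2))=k(e_{43}(W_1))$, symmetrically $k'(e_{43}(W_1))=k(e_{34}(W_1))$, and $k'=k$ on $E(W_1)\setminus\{e_{34},e_{43}\}$. Hence $(k,k')$ is exactly a pair to which the second equality of Lemma~\ref{lem_doubleedge1} applies, yielding $I_\iota(W_1,k,\{3,4\})=-I_\iota(W_1,k',\{3,4\})$.

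Chaining the three equalities gives
$$I(W_2,j,\{3,4\})=I_\iota(W_1,k,\{3,4\})=-I_\iota(W_1,k',\{3,4\})=-I(W_2,j',\{3,4\}),$$
which is the claim. The only step requiring care is the bookkeeping in the second paragraph, namely checking that the relabelling of the edges induced by $\I$ matches the transposition of $e_{34}$ and $e_{43}$ appearing in the hypotheses of Lemma~\ref{lem_doubleedge1}; there is no analytic content beyond what is already packaged in Lemmas~\ref{lem_reverse} and~\ref{lem_doubleedge1}.
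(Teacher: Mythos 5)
Your proposal is correct and follows exactly the paper's own argument: the paper's proof of this lemma is precisely ``use the second equality of Lemma \ref{lem_doubleedge1} and Lemma \ref{lem_reverse}'', and your write-up just makes the label bookkeeping explicit. Nothing further is needed.
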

\begin{proof}
 Use the second equality of Lemma \ref{lem_doubleedge1} and Lemma \ref{lem_reverse}.
\end{proof}
\begin{lem}\label{lem_doubleedge2_d}
Let $j$ be in $\bijlab(E(W_2))$. Let $j'\in\bijlab(E(W_2))$ be the bijection differing from $j$ by:
\begin{align*}
 j'(e_{24})&=j(e_{13}), \\
 j'(e_{13})&=j(e_{24}), \\
 j'(e_{43})&=j(e_{34}), \\
 j'(e_{34})&=j(e_{43}).
 \end{align*}
We have:
$$I\left(W_2,j,\{1,2\}\right)=-I(W_2,j',\{1,2\}).$$
\end{lem}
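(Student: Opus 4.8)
The plan is to derive this statement for $W_2$ directly from the corresponding statement for $W_1$ (Lemma \ref{lem_doubleedge2}) by applying the edge-reversal dictionary provided by Lemma \ref{lem_reverse}, exactly as Lemma \ref{lem_doubleedge1_d} was deduced from Lemma \ref{lem_doubleedge1}. The key observation is that $W_2 = \mathcal{I}(W_1)$, and $\I \colon E(W_1) \to E(W_2)$ is the bijection sending $e_{uv}(W_1)$ to $e_{vu}(W_2)$; under our naming convention of Figure \ref{doubleedge}, $\I$ identifies the four relevant edges $e_{24}, e_{13}, e_{43}, e_{34}$ of $W_1$ with corresponding edges of $W_2$, so the ``double-edge'' structure on $F(\{1,2\})$ is preserved.

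First I would take an arbitrary $k \in \bijlab(E(W_2))$ and set $j = k \circ \I \in \bijlab(E(W_1))$, so that $k = j \circ \I^{-1}$. Let $j'$ be the bijection in $\bijlab(E(W_1))$ obtained from $j$ by the swaps in the statement of Lemma \ref{lem_doubleedge2}, and let $k' = j' \circ \I^{-1}$. One checks that $k'$ differs from $k$ precisely by the four swaps named in the present statement, because $\I$ intertwines the edge labels as described. Then Lemma \ref{lem_doubleedge2} applied to $j$ gives $I_\iota(W_1,j,\{1,2\}) = -I_\iota(W_1,j',\{1,2\})$ (the second displayed equality of that lemma, the one involving $I_\iota$). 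Finally, Lemma \ref{lem_reverse} with $\Gamma = W_1$ and $A = \{1,2\}$ converts each side: $I(W_2, k, \{1,2\}) = I(\I(W_1), j \circ \I^{-1}, \{1,2\}) = I_\iota(W_1, j, \{1,2\})$, and likewise $I(W_2, k', \{1,2\}) = I_\iota(W_1, j', \{1,2\})$. Combining these yields $I(W_2,k,\{1,2\}) = -I(W_2,k',\{1,2\})$, which is the claim after renaming $k,k'$ back to $j,j'$.

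The only real care needed is bookkeeping: verifying that the edge bijection $\I$ does indeed send the edges labelled $e_{24},e_{13},e_{43},e_{34}$ of $W_1$ (in the notation of Figure \ref{doubleedge}) to the correspondingly-labelled edges of $W_2$, so that the permutation $j \mapsto j'$ on $W_1$-labellings matches the stated permutation $k \mapsto k'$ on $W_2$-labellings under conjugation by $\I$. This is a direct check against Figures \ref{graphs} and \ref{doubleedge} and the definition of $\I$ preceding Lemma \ref{lem_anomalous}; there is no analytic content beyond what is already in Lemmas \ref{lem_doubleedge2} and \ref{lem_reverse}. So the proof is a one-line invocation once the combinatorial correspondence is spelled out, entirely parallel to the proof of Lemma \ref{lem_doubleedge1_d}.

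\begin{proof}
Apply Lemma \ref{lem_doubleedge2} (the equality involving $I_\iota$) together with Lemma \ref{lem_reverse}, in the same way that Lemma \ref{lem_doubleedge1_d} follows from Lemma \ref{lem_doubleedge1}. Explicitly, writing $k=j\circ\I$ for the given $W_2$-labelling and $k'$ for the $W_1$-labelling obtained from $k$ by the indicated swaps, Lemma \ref{lem_reverse} gives $I(W_2,j,\{1,2\})=I_\iota(W_1,k,\{1,2\})$ and $I(W_2,j',\{1,2\})=I_\iota(W_1,k',\{1,2\})$, while Lemma \ref{lem_doubleedge2} gives $I_\iota(W_1,k,\{1,2\})=-I_\iota(W_1,k',\{1,2\})$. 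Combining these equalities yields the claim.
\end{proof}
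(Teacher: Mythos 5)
Your proof is correct and follows exactly the paper's own argument: the paper's proof is precisely ``use the second equality of Lemma \ref{lem_doubleedge2} and Lemma \ref{lem_reverse}'', and your extra bookkeeping of conjugating labellings by $\I$ (modulo a harmless swap of the names $j$, $k$ between your sketch and your final paragraph) just spells out what the paper leaves implicit.
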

\begin{proof}
    Use the second equality of Lemma \ref{lem_doubleedge2} and Lemma \ref{lem_reverse}.
\end{proof}

\subsection{Triangular faces}

Let $\Gamma$ be in $\{T_1,T_2\}$. Let $j$ be in $\bijlab(E(\Gamma))$. Let $A \subset \{1,2,3,4\}$ be a set of cardinality $3$. The pair $(\Gamma,F(A))$ is called a \emph{triangular face}. The graph given by the edges in $\Gamma$ between the vertices in $A$ is a triangle whose edges are oriented. We distinguish two cases, see Figure \ref{triangle_face}. First, assume that this triangle is an oriented cycle. Consider the edge of $E_A(\Gamma)$ with the greatest label among the three edges in $E_A(\Gamma)$, and let $v$ be the vertex of $A$ outside this edge. In the second case, let $v$ be the only vertex in $A$ that is the first vertex of an edge in $E_A(\Gamma)$ and the second vertex of another edge in $E_A(\Gamma)$. In both cases, let $w$ be the vertex of $A$ such that there is an edge going from $v$ to $w$, and let $x$ be the remaining vertex of $A$. 

\begin{figure}
\centering
\begin{subfigure}{0.4\textwidth}
\centering
\begin{tikzpicture}[scale=1.7]
    \useasboundingbox (-0.7,-0.5) rectangle (0.7,0.7);
    \coordinate (v) at (90:0.5);
    \coordinate (a) at (-30:0.5);
    \coordinate (b) at (-150:0.5);
    \draw[very thick]  (v)--(a) node[midway,sloped]{$>$};
    \draw (30:0.48) node{\scriptsize $e_{vw}$};
    \draw[very thick]  (a)--(b) node[midway,sloped]{$<$};
    \draw[very thick]  (b)--(v) node[midway,sloped]{$>$};
    \draw (150:0.48) node{\scriptsize $e_{xv}$};
    \draw (v) node[above]{\scriptsize $v$};
    \draw (a) node[below]{\scriptsize $w$};
    \draw (b) node[below]{\scriptsize $x$};
    \draw (-90:0.4) node{\scriptsize $e_{wx}$};
\end{tikzpicture}
    \caption{First case: $j(e_{vw}),j(e_{xv})<j(e_{wx})$.}
\end{subfigure}
\begin{subfigure}{0.4\textwidth}
\centering
\begin{tikzpicture}[scale=1.7]
    \useasboundingbox (-0.7,-0.5) rectangle (0.7,0.7);
    \coordinate (v) at (90:0.5);
    \coordinate (a) at (-30:0.5);
    \coordinate (b) at (-150:0.5);
    \draw[very thick] (v)--(a) node[midway,sloped]{$>$};
     \draw (30:0.48) node{\scriptsize $e_{vw}$};
    \draw[very thick]  (a)--(b) node[midway,sloped]{$>$};
    \draw[very thick]  (b)--(v) node[midway,sloped]{$>$};
    \draw (150:0.48) node{\scriptsize $e_{xv}$};
    \draw (v) node[above]{\scriptsize$v$};
    \draw (a) node[below]{\scriptsize$w$};
    \draw (b) node[below]{\scriptsize$x$};
\end{tikzpicture}
    \caption{Second case.}
\end{subfigure}

 \caption{Notation for a triangular face.}  
 \label{triangle_face}
\end{figure}
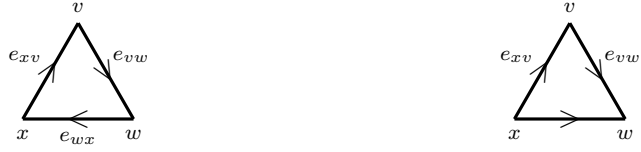

\begin{lem}\label{lem_triangle}
Let $j'\in \bijlab(E(\Gamma))$ be the bijection obtained from $j$ by exchanging $j(e_{vw})$ and $j(e_{xv})$.  We have:
$$I(\Gamma,j,A)=-I(\Gamma,j',A).$$
\end{lem}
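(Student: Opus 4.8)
The plan is to construct an explicit orientation-reversing diffeomorphism of $F(A)$ which interchanges the projections $p_{e_{vw}}$ and $p_{e_{xv}}$ while fixing the third projection $p_{e_{wx}}$, so that pulling back the integrand along it produces the integrand with $j$ replaced by $j'$, up to the global sign coming from orientation reversal. Since $|A| = 3$, the face $F(A)$ fibers over $\check{C}_{(\{1,2,3,4\}\setminus A)\cup\{v\}}(\check{M})$ with fiber the space $\check{S}_A(T_{c(v)}M)$ of injections of the three-element set $A$ into a tangent space, up to translation and dilation; this fiber is homeomorphic to the configuration space of three points in $\R^3$ modulo translation and scaling, which deformation retracts onto (and for orientation purposes behaves like) a high-dimensional sphere $S^{5}$ — more precisely it is an $S^2$-bundle, and the relevant involution will act fiberwise with a determined sign.

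First I would pick $v$ as the basepoint $a\in A$ in Definition \ref{def_facepasinf}, so that a point of $F(A)$ is a pair $(c_0, c_1)$ where $c_0 \in \check{C}_{(\{1,2,3,4\}\setminus A)\cup\{v\}}(\check M)$ and $c_1\colon A \to T_{c_0(v)}M$ is an injection defined up to translation and dilation; normalize the representative so that $c_1(v) = 0$. Then I would define $f\colon F(A)\to F(A)$ by sending $(c_0, c_1) \mapsto (c_0, c_1')$ where $c_1'$ is the reflection of $c_1$ through the point $c_1(v)=0$ along the line spanned by $c_1(w)-c_1(v)$ and $c_1(x)-c_1(v)$ — concretely, $c_1'$ is the linear map (an orthogonal reflection or a sign change in suitable coordinates) that swaps the directions of $e_{vw}$ and $e_{xv}$. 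The cleanest choice: work in the two-dimensional plane through $c_1(v), c_1(w), c_1(x)$ inside $T_{c_0(v)}M$ and let $f$ be the reflection of that plane exchanging the ray $v\to w$ with the ray $v\to x$, extended by the identity (or by $-1$) on the orthogonal complement, chosen to make $f$ orientation-reversing on the fiber. One must verify on $F(A)$, using the explicit description of $p_e$ given just before Lemma \ref{lem_degenerateW} (cases 1–4), that $p_{e_{vw}}\circ f = p_{e_{xv}}$, $p_{e_{xv}}\circ f = p_{e_{vw}}$, $p_{e_{wx}}\circ f = p_{e_{wx}}$ (these are the three edges with both ends in $A$), and $p_e \circ f = p_e$ for every edge $e$ with an endpoint outside $A$ — the last holds because such $p_e$ factors through $c_0$ and the basepoint $v$, both untouched by $f$, and because $f$ fixes the line/plane data needed; here one uses that the third edge $e_{wx}$ (the base of the triangle, oriented as $w\to x$ in the first case or as part of the cycle in the second) is preserved because reflecting through the angle bisector at $v$ sends the segment $wx$ to itself with endpoints swapped, and $p_{e_{wx}}$ on $F(A)$ is the class of $c_1(x) - c_1(w)$ up to dilation, which a reflection swapping $w$ and $x$ sends to its negative — so I must instead be careful and choose $f$ to swap $w$ and $x$ only in the directions but keep $p_{e_{wx}}$ literally fixed; the correct statement is that $f$ swaps the two rays from $v$ and hence the \emph{directions} $c_1(w)$ and $c_1(x)$ while the induced map on the unit direction of $c_1(x)-c_1(w)$ must be checked to be the identity after projectivization, which it is since in $C_2(M)$ the relevant boundary point only records the direction and a reflection exchanging $w\leftrightarrow x$ fixes the line through them.

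The main obstacle I anticipate is exactly this sign/consistency bookkeeping: I must produce one diffeomorphism $f$ that simultaneously (i) is orientation-reversing on $F(A)$, (ii) swaps $p_{e_{vw}}\leftrightarrow p_{e_{xv}}$, and (iii) fixes $p_{e_{wx}}$, and these three requirements interact, because the two cases of Figure \ref{triangle_face} have different orientations on the triangle. I would handle the orientation count by noting $F(A)$ is an oriented bundle over $\check{C}_{(\{1,2,3,4\}\setminus A)\cup\{v\}}(\check M)$ whose fiber is the configuration space of three distinct points in $T_{c_0(v)}M\cong\R^3$ modulo translations and positive dilations — a $5$-manifold — and $f$ acts fiberwise; the fiber deformation retracts onto the subspace where the three points are unit vectors from the centroid, and one computes the degree of $f$ there directly (a reflection in a plane inside $\R^3$ has degree $-1$, and tracking how it acts on the translation/dilation quotient preserves that sign). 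Once $p_{e_{vw}}\circ f = p_{e_{xv}}$, $p_{e_{xv}}\circ f = p_{e_{vw}}$, $p_{e_{wx}}\circ f = p_{e_{wx}}$, and $p_e\circ f = p_e$ otherwise are established, the change of variables $\mathbf{1}_{[0,1]}\times f$ on $[0,1]\times F(A)$ gives
$$\int_{[0,1]\times F(A)} \bigwedge_{e\in E(\Gamma)} (\indi\times p_e)^*(\omega_{j'(e)}) = -\int_{[0,1]\times F(A)} \bigwedge_{e\in E(\Gamma)} (\indi\times p_e)^*(\omega_{j(e)}),$$
which is precisely $I(\Gamma,j',A) = -I(\Gamma,j,A)$, so that $I(\Gamma,j,A) = -I(\Gamma,j',A)$ as claimed. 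The role of the explicit case split (cycle vs.\ non-cycle) and the choice of $v$ as "the vertex opposite the largest-labelled edge" or "the source-sink vertex" is only to make the labels $j(e_{vw})$ and $j(e_{xv})$ the ones being exchanged consistent with the definition of $j'$; I would remark that the construction of $f$ itself is uniform in the two cases.
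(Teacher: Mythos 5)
Your overall strategy (an orientation-reversing diffeomorphism of $F(A)$ intertwining $p_{e_{vw}}$ and $p_{e_{xv}}$ and fixing everything else, then a change of variables) is the right one and is the paper's strategy, but the specific diffeomorphism you construct does not do the job, and the patch you sketch rests on a false claim. First, the reflection through the angle bisector at $v$ sends the direction of $c_1(w)-c_1(v)$ to the direction of $c_1(x)-c_1(v)$; but $p_{e_{xv}}$ records the direction of $c_1(v)-c_1(x)$, since $e_{xv}$ is oriented from $x$ to $v$. So your $f$ only gives $p_{e_{vw}}\circ f=\iota\circ p_{e_{xv}}$ (antipodal composition), and in this paper the propagating forms are \emph{not} assumed antisymmetric under the exchange involution $\iota$ (that symmetrization is precisely what distinguishes Lescop's Theorem \ref{thm_Lcount} from Theorem \ref{thm_secondcount}), so equality up to the antipode is not enough. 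Second, your $f$ does not fix $p_{e_{wx}}$: writing $R$ for the reflection, $R(c_1(x)-c_1(w))$ is not parallel to $c_1(x)-c_1(w)$ unless $\lVert c_1(w)-c_1(v)\rVert=\lVert c_1(x)-c_1(v)\rVert$, and your attempted fix invokes the statement that the boundary point of $C_2(M)$ ``only records the line through the two points,'' which is wrong: the face $U\check M$ is the unit normal bundle quotiented by \emph{positive} scalars, so it records the oriented direction, and even in the isoceles case your map reverses it. Third, the map is not even globally defined or smooth on $F(A)$, since the plane of the triangle (and hence the bisector reflection) degenerates when the three points of $A$ are collinear, a locus that is present in $\check S_A(T_{c_0(v)}M)$.

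The cancellation works with a different, affine map that needs no metric and handles both cases of Figure \ref{triangle_face} uniformly: keep $c_0$, $c_1(w)$ and $c_1(x)$ fixed and send $c_1(v)$ to $c_1(w)+c_1(x)-c_1(v)$ (point reflection of $v$ through the midpoint of $w$ and $x$). Then $c_1'(w)-c_1'(v)=c_1(v)-c_1(x)$ and $c_1'(v)-c_1'(x)=c_1(w)-c_1(v)$, so $p_{e_{vw}}\circ f=p_{e_{xv}}$ and $p_{e_{xv}}\circ f=p_{e_{vw}}$ exactly (orientations included), while $c_1'(x)-c_1'(w)=c_1(x)-c_1(w)$ fixes $p_{e_{wx}}$, and all edges with an endpoint outside $A$ are untouched because they factor through $c_0$. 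This map is well defined on the quotient by translations and dilations, is injective-valued, is defined on all of $F(A)$, and is orientation-reversing (in coordinates normalizing $c_1(v)=0$ it is $(y,z)\mapsto(-z,-y)$ on $(\R^3)^2$, of determinant $-1$, acting fiberwise). Your change-of-variables conclusion is then correct once these intertwining identities hold.
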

\begin{proof}
We define a diffeomorphism from $F(A)$ to $F(A)$ as follows:
$$
f\colon
\left\{
\begin{array}{lcl}
 F(A) &\rightarrow& F(A) \\
 (c_0 \colon (\{1,2,3,4\}\setminus A)\cup\{v\} \rightarrow \check{M}, c_1 \colon A \rightarrow T_{c_0(v)}M)&\mapsto& (c_0,c_1'),
\end{array}
\right.
$$
where:
$$
c_1' \colon
\left\{
\begin{array}{lcl}
 v &\mapsto& c_1(w)+c_1(x)-c_1(v) \\
 w &\mapsto& c_1(w) \\
 x &\mapsto& c_1(x).
\end{array}
\right.
$$
Since $c_1$ is an injection, the map $c_1'$ is also an injection. Thus, the map $f$ is well-defined. The class of $c_1'$ in $\check{S}_A(T_{c_0(v)}M)$ does not depend on the particular choice of $c_1$ as a representative of its class in $\check{S}_A(T_{c_0(v)}M)$. Moreover $f$ is orientation-reversing. Finally, we have $p_{e_{vw}}\circ f=p_{e_{xv}}$, $p_{e_{xv}}\circ f=p_{e_{vw}}$, and $p_e \circ f=p_e$ for $e\in E(\Gamma)\setminus \{e_{vw},e_{xv}\}$. Indeed, keeping our notation, we have:
$$p_{e_{vw}}\circ f(c)=\frac{c_1'(w)-c_1'(v)}{||c_1'(w)-c_1'(v)||}=\frac{c_1(v)-c_1(x)}{||c_1(v)-c_1(x)||}=p_{e_{xv}}(c).$$
We conclude, using $\indi\times f$ as a change of variables.
\end{proof}

\subsection{Proof of Proposition \ref{prop_main} up to one-edge faces}
Let $\Gamma$ be in $\{T_1,T_2,W_1,W_2\}$. Let $A$ be a subset of $\{1,2,3,4\}$ such that $|E_A(\Gamma)|=1$. The pair $(\Gamma,F(A))$ is called a \emph{one-edge face}. The third section is devoted to the proof of the following proposition.
\begin{prop}\label{prop_IHX}
We have:
$$\sum_{j\in\bijlab(E(T_1))} \sum_{\substack{A\subset \{1,2,3,4\} \\ |E_A(T_1)|=1}} I(T_1,j,A)+ 3 \sum_{j\in\bijlab(E(W_1))} \sum_{\substack{A\subset \{1,2,3,4\} \\ |E_A(W_1)|=1}} I(W_1,j,A) =0,$$
and the same equality obtained by replacing $I$ with $I_\iota$.
\end{prop}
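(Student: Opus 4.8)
The plan is to prove Proposition~\ref{prop_IHX} by exhibiting a \emph{weighted IHX-type cancellation} among the one-edge faces, that is, a partition of the terms appearing in the two sums into small blocks (of size $2$ or $3$) on which the weighted sum vanishes. First I would enumerate precisely which pairs $(\Gamma, A)$ are one-edge faces. For $\Gamma \in \{T_1,T_2\}$, the condition $|E_A(\Gamma)|=1$ forces $|A|=2$ and $A$ to be a pair of vertices joined by exactly one edge; since $T_1$ (hence $T_2$) is the complete graph $K_4$ with one edge doubled, one checks that every $2$-element subset $A$ with a single edge occurs, while the pair carrying the doubled edge does not. For $\Gamma\in\{W_1,W_2\}$, the theta-graph structure means $|A|=2$ with a single edge corresponds to $A\in\{\{1,3\},\{2,4\}\}$ (the pairs joined by the single horizontal edges $e_{13}$, $e_{24}$), while $\{1,2\},\{3,4\}$ carry double edges (already handled in Lemmas~\ref{lem_doubleedge1}--\ref{lem_doubleedge2_d}) and $\{1,4\},\{2,3\}$ carry no edge (Lemma~\ref{lem_degsansedge}). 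So the $W$-contribution reduces to two families of faces, each appearing with the weight $3$.

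The geometric heart of the argument is that a one-edge face $F(A)$ with $A=\{u,v\}$ and $E_A(\Gamma)=\{e_{uv}\}$ is, as a stratum of $\partial C_4(M)$, the locus where $c(u)$ and $c(v)$ collide; the restriction of the integrand there is governed only by the propagating form $\omega_{j(e_{uv})}$ on the blown-up diagonal $U\check M$, together with the three remaining propagators pulled back via the map that forgets the collision direction and records only the common point. The standard Bott--Taubes/Kontsevich mechanism (see \cite[Chapter~9]{lesbookzv2}) is that these ``principal'' or ``hidden'' faces assemble, after summing over how the other vertices attach, into the Jacobi/IHX relation \Jacobi: fixing which vertex is created by the collapse and which edge germs emanate from it, the three ways of reconnecting the external edges to the two ends of the collapsed edge sum to zero in a suitable sense. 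Concretely, I would group the one-edge faces by the underlying ``collapsed configuration'' — the data of $A$, the common image point, and the cyclic/linear data of the external edges — and within each group identify the three (or fewer) terms that form an IHX triple. The weight $3$ on the $W$-graphs is exactly what is needed for the theta-graph contributions to balance against the $K_4$-graph contributions in this relation, since collapsing an edge of $T_1$ produces (up to the anomalous correction already dealt with in Lemma~\ref{lem_anomalous}) configurations related to $W_1$-type graphs, and a $2$-to-$1$ multiplicity from the doubled edge accounts for the discrepancy between $3$ and $1$.

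The concrete steps, in order, would be: (1) fix the list of one-edge faces for each $\Gamma$ and, using the explicit description of $p_e$ on $F(A)$ from the four-case analysis preceding Lemma~\ref{lem_degenerateW}, write each $I(\Gamma,j,F(A))$ as an integral over $[0,1]\times F(A)$ with $F(A)$ fibered over $\check C_3(\check M)$ (the three surviving points, one of them being the collision point) with fiber $S^2$ (the collapse direction, which is where $\omega_{j(e_{uv})}$ lives); (2) recognize that the fiber integral over $S^2$ of $\omega_{j(e_{uv})}$ along $U\check M$ is normalized to $1$, so the face integral collapses to an integral over $[0,1]\times \check C_3(\check M)$ of a wedge of three pulled-back propagators — and crucially this reduced integral \emph{does not see which end of the collapsed edge the external edges were attached to}, only the combinatorics of the resulting trivalent graph on three effective vertices; (3) sum over $j\in\bijlab(E(\Gamma))$ and over $A$, and reorganize the sum by the isomorphism class of the reduced three-vertex graph-with-labelling; (4) verify that for each such class the coefficient is a signed count of IHX triples that telescopes to $0$, with the $\tfrac13$ (from the statement of Theorem~\ref{thm_secondcount}, here manifested as the weight $3$) being the exact bookkeeping constant that makes the $T$- and $W$-contributions cancel against each other; (5) observe that every diffeomorphism and orientation computation used is insensitive to replacing $\omega_{j(e)}$ by $\iota^*(\omega_{j(e)})$ — the involution $\iota$ commutes with all the relevant face inclusions and collapse maps up to sign in a way that is uniform across the triple — so the identical argument proves the $I_\iota$ version verbatim.

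The main obstacle I anticipate is step~(4): getting the signs right in the IHX triple. The orientation of $F(A)$ as a codimension-one stratum of $\partial C_4(M)$, combined with the outward-normal boundary convention and the orientation of the $S^2$ fiber, must be tracked carefully, and one must confirm that the three reconnection terms genuinely come with the $+,-,+$ pattern (or its analogue after accounting for edge-orientation reversals, which is where the involution $\mathcal I$ and Lemma~\ref{lem_reverse} re-enter to reduce the $T_2,W_2$ cases to $T_1,W_1$). A secondary subtlety is making sure the enumeration in step~(1) is complete and that no one-edge face has been mis-sorted into the ``degenerate'' or ``double-edge'' categories already treated — in particular that the one-edge faces of $W_1,W_2$ are exactly $A\in\{\{1,3\},\{2,4\}\}$ and that these are not themselves degenerate for dimensional reasons (they are not, since $|E_A|=1$ keeps the image of $p_{e_{uv}}$ genuinely two-dimensional in $U\check M$). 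Once the sign pattern is pinned down, the cancellation is forced, and together with Lemmas~\ref{lem_inf}, \ref{lem_inf_bis}, \ref{lem_degenerateW}, \ref{lem_degsansedge}, \ref{lem_anomalous}, \ref{lem_doubleedge1}--\ref{lem_doubleedge2_d}, \ref{lem_triangle} and Proposition~\ref{prop_IHX} this yields Proposition~\ref{prop_main}, hence Theorem~\ref{thm_secondcount}.
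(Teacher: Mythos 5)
Your overall strategy is the right one and is essentially the paper's: group the one-edge faces according to the labelled graph obtained by collapsing the single edge of $E_A(\Gamma)$, observe that each group is an IHX triple consisting of two tetrahedron-type terms and one double-theta-type term, and let the coefficients ($1$ versus $3$, filtered through the automorphism counts $3$ for $T_1$ and $2$ for $W_1$) produce the Jacobi cancellation. Your variant of comparing the terms — fiber-integrating the collapsed edge's propagator over the $S^2$ of collapse directions and comparing reduced integrals over $[0,1]\times\check{C}_3(\check{M})$ — differs from the paper, which instead compares the unreduced face integrals directly via diffeomorphisms of the faces $F(\Gamma,A)$ induced by renumbering the vertices; your route can be made to work, but note that the normalization you invoke is not free: Definition \ref{def_propform} constrains a propagating form only on $\partial C_2(M)\setminus U\check{M}$, so the claim that its integral over each fiber of $U\check{M}$ equals $1$ requires a separate argument (closedness plus the fact that the fibers are homologous in $C_2(M)$ to a sphere at infinity on which the form is $G_M^*\omega_{S^2}$).

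The genuine gap is that the decisive content of the proposition is deferred rather than proved. First, the sign pattern in each triple — that the two tetrahedron terms enter with equal sign and the double-theta term with the opposite sign — is exactly what the paper establishes through the orientation bookkeeping of Lemma \ref{lem_match_or} and the explicit orientation-reversing/preserving diffeomorphisms in Lemmas \ref{lem_IHXu} and \ref{lem_IHXd}; you name this as your "main obstacle" but do not carry it out, and without it nothing cancels. Second, the enumeration of the triples must respect the edge-orientations and labels: one has to check that the three reconnections of the $4$-valent vertex are again labelled $T_1$'s or $W_1$'s, that exactly two are tetrahedra and one is a double-theta, that the two tetrahedra are non-isomorphic as labelled graphs, and that the automorphism factors convert the coefficients $1$ and $3$ into the weights $1,1,2$ of the Jacobi relation (the paper's Lemmas \ref{lem_rephrase_u}, \ref{lem_preimage1}, \ref{lem_preimage2}); your bookkeeping sketch does not set this up, and it contains concrete errors that would derail it: $T_1$ is the tetrahedron graph $K_4$, not "$K_4$ with one edge doubled", so all six $2$-element subsets $A$ are one-edge faces for $T_1$ (your list drops one); and after collapsing, the reduced integrand involves the five remaining propagators, not three (a wedge of three $2$-forms could not be top-dimensional on $[0,1]\times\check{C}_3(\check{M})$). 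The $I_\iota$ statement does follow verbatim once the above is in place, as you say.
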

We now prove Proposition \ref{prop_main} up to Proposition \ref{prop_IHX}. Lemmas \ref{lem_inf} and \ref{lem_inf_bis} show that the contribution of faces where a bunch of points goes to infinity vanishes. Combining results about other degenerate faces, anomalous faces, double-edge faces and triangular faces, we get the following proposition. It shows that the contribution from faces that are neither infinite faces nor one-edge faces vanishes.
\begin{prop}\label{prop_autresfaces}
Let $\Gamma$ be in $\{T_1,T_2,W_1,W_2\}$. We have:
$$\sum_{j\in\bijlab(E(\Gamma))} \sum_{\substack{A\subset \{1,2,3,4\} \\ |E_A(\Gamma)|\neq 1}} I(\Gamma,j,A)=0.$$
 \end{prop}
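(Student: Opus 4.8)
The strategy is to classify the faces $F(A)$ with $|E_A(\Gamma)| \neq 1$ according to $|A|$ and to the combinatorial type of the induced subgraph, and then to invoke the appropriate cancellation lemma for each type. First I would organize the sum: for each $\Gamma$ and each $A$ with $|A| \geq 2$, the induced subgraph $\Gamma_A$ (the edges of $\Gamma$ with both ends in $A$) has a number of edges $|E_A(\Gamma)| \in \{0,1,2,3\}$, and we are to treat all cases except $|E_A(\Gamma)| = 1$. Since each $\Gamma$ has four vertices and six edges arranged either as a tetrahedron ($T_1, T_2$) or a theta-graph doubled ($W_1, W_2$), the possible pairs $(|A|, \Gamma_A)$ are limited; I would enumerate them explicitly for $T_1, T_2$ (where $|A|=2$ always gives exactly one edge, so only $|A|=3$, the triangular faces, and $|A|=4$, the anomalous face, contribute) and for $W_1, W_2$ (where $|A|=2$ gives either a double edge $\{1,2\},\{3,4\}$ or no edge $\{1,4\},\{2,3\}$; $|A|=3$ always has the shape of Figure \ref{deg_W}; and $|A|=4$ is again anomalous).

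Next I would dispatch each case to the relevant lemma. The faces $F(A)$ with $|E_A(\Gamma)|=0$ (namely $W_1, W_2$ with $A \in \{\{1,4\},\{2,3\}\}$) vanish individually by Lemma \ref{lem_degsansedge}. The faces with $|A|=3$ for $W_1, W_2$ vanish individually by Lemma \ref{lem_degenerateW}. For the triangular faces ($T_1, T_2$ with $|A|=3$), Lemma \ref{lem_triangle} gives an orientation-reversing involution $j \mapsto j'$ on $\bijlab(E(\Gamma))$ with $I(\Gamma,j,A) = -I(\Gamma,j',A)$; since $j \neq j'$ (the two swapped edges carry distinct labels), summing over $j \in \bijlab(E(\Gamma))$ pairs the terms and the total over triangular faces is $0$ for each $\Gamma \in \{T_1,T_2\}$. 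For the double-edge faces ($W_1, W_2$ with $A \in \{\{1,2\},\{3,4\}\}$), Lemmas \ref{lem_doubleedge1}, \ref{lem_doubleedge2}, \ref{lem_doubleedge1_d}, \ref{lem_doubleedge2_d} give orientation-reversing pairings of the labels, so the sum over $j$ of $I(\Gamma,j,A)$ vanishes for each such $(\Gamma, A)$. Finally, for the anomalous faces ($|A| = \{1,2,3,4\}$), Lemma \ref{lem_anomalous} pairs $I(\Gamma, j, \{1,2,3,4\})$ for $\Gamma \in \{T_1, W_1\}$ with $I(\mathcal{I}(\Gamma), j \circ \I^{-1}, \{1,2,3,4\})$ for $\mathcal{I}(\Gamma) \in \{T_2, W_2\}$; as $\I$ is a bijection, summing over all $j$ shows that the anomalous contribution from $T_1$ cancels that from $T_2$, and likewise $W_1$ with $W_2$. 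Adding these cancellations over all four graphs $\Gamma \in \{T_1,T_2,W_1,W_2\}$ yields $\sum_{j}\sum_{|E_A(\Gamma)| \neq 1} I(\Gamma,j,A) = 0$ termwise in $\Gamma$, which is the claim.

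One subtlety I would be careful about: Proposition \ref{prop_autresfaces} is stated with the weight $1$ on every $\Gamma$, whereas Proposition \ref{prop_main} carries the factor $3$ on $W_1, W_2$; but since the vanishing here holds separately for each fixed $\Gamma$, the weights are irrelevant for this proposition and only matter later when combining with Proposition \ref{prop_IHX}. A second point to verify is that the enumeration of $(|A|, \Gamma_A)$ is genuinely exhaustive — in particular that for $T_1, T_2$ no $2$-element subset $A$ has $0$ or $2$ edges between its vertices (true because a tetrahedron is a complete graph on four vertices with single edges), and that for $W_1, W_2$ every $3$-element subset does induce the configuration of Figure \ref{deg_W} (true because the doubled edge of the theta-graph must lie inside any three of the four vertices). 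The main obstacle is therefore not any single hard estimate but rather the bookkeeping: making sure every face $F(A)$ with $|E_A(\Gamma)| \neq 1$ is accounted for exactly once by exactly one of the cited lemmas, and that the pairings $j \mapsto j'$ used in the triangular and double-edge cases are genuine fixed-point-free involutions on $\bijlab(E(\Gamma))$ so that the summed contributions really cancel rather than merely pair a term with itself.
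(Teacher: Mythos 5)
Your dispatch of the faces to the eight lemmas is exactly the paper's proof, which consists of nothing more than citing Lemmas \ref{lem_degenerateW}, \ref{lem_degsansedge}, \ref{lem_anomalous}, \ref{lem_doubleedge1}, \ref{lem_doubleedge2}, \ref{lem_doubleedge1_d}, \ref{lem_doubleedge2_d} and \ref{lem_triangle}; your enumeration of the pairs $(|A|,\Gamma_A)$ and your checks that the label swaps in the triangular and double-edge cases are fixed-point-free involutions are correct. The one inaccuracy is your final claim that the vanishing is \emph{termwise in} $\Gamma$: as you yourself observe, Lemma \ref{lem_anomalous} cancels the anomalous contribution of $T_1$ against that of $T_2$ (and $W_1$ against $W_2$), so for a single fixed $\Gamma$ the sum $\sum_j I(\Gamma,j,\{1,2,3,4\})$ is not shown to vanish, and there is no reason it should; the cancellation only holds after summing each graph with its edge-reversal $\mathcal{I}(\Gamma)$. (The paper's per-graph formulation of the proposition has the same looseness, and what is actually used in Proposition \ref{prop_main} is the sum over all four graphs.) Consequently your remark that the weights are irrelevant "since the vanishing holds separately for each fixed $\Gamma$" needs a different justification: the weights are harmless because $T_1,T_2$ carry the same coefficient and $W_1,W_2$ carry the same coefficient, so the cross-graph anomalous cancellation survives the weighting.
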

 \begin{proof}
    See Lemmas \ref{lem_degenerateW}, \ref{lem_degsansedge}, \ref{lem_anomalous}, \ref{lem_doubleedge1}, \ref{lem_doubleedge2}, \ref{lem_doubleedge1_d}, \ref{lem_doubleedge2_d} and \ref{lem_triangle}. 
 \end{proof}
Using Lemma \ref{lem_reverse}, we deduce the following proposition from Proposition \ref{prop_IHX}. Together, these two propositions show that the contribution of one-edge faces vanishes.
 \begin{prop} \label{prop_IHX_d}
We have:
$$\sum_{j\in\bijlab(E(T_2))} \sum_{\substack{A\subset \{1,2,3,4\} \\ |E_A(T_2)|=1}} I(T_2,j,A)+ 3 \sum_{j\in\bijlab(E(W_2))} \sum_{\substack{A\subset \{1,2,3,4\} \\ |E_A(W_2)|=1}} I(W_2,j,A) =0.$$
\end{prop}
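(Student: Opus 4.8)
The plan is to obtain Proposition~\ref{prop_IHX_d} directly from Proposition~\ref{prop_IHX} by transporting the left-hand side through the involution $\mathcal{I}$, with Lemma~\ref{lem_reverse} playing the role of a change-of-variables dictionary. Recall that $\mathcal{I}$ exchanges $T_1$ with $T_2$ and $W_1$ with $W_2$, and that the edge bijection $\I\colon E(\Gamma)\to E(\mathcal{I}(\Gamma))$ sends $e_{uv}(\Gamma)$ to $e_{vu}(\mathcal{I}(\Gamma))$.

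First I would dispatch the bookkeeping. Since $\I$ preserves the underlying unordered pair of endpoints of each edge, it restricts, for any $A\subseteq\{1,2,3,4\}$, to a bijection $E_A(\Gamma)\to E_A(\mathcal{I}(\Gamma))$; in particular $|E_A(T_1)|=1$ if and only if $|E_A(T_2)|=1$, and similarly for $W_1,W_2$, so the index sets of $A$'s in the two propositions match under the identity on subsets of $\{1,2,3,4\}$. I would also record that post-composition $j\mapsto j\circ\I^{-1}$ is a bijection from $\bijlab(E(\Gamma))$ onto $\bijlab(E(\mathcal{I}(\Gamma)))$.

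Next, for a fixed $\Gamma\in\{T_1,W_1\}$ and a fixed $A$ with $|E_A(\Gamma)|=1$ (hence $|A|\geq 2$), Lemma~\ref{lem_reverse} gives $I(\mathcal{I}(\Gamma),j\circ\I^{-1},A)=I_\iota(\Gamma,j,A)$ for every $j\in\bijlab(E(\Gamma))$. Summing over $j$ and over all such $A$, and using the bijection of labellings and the matching of index sets from the previous step, I obtain
$$\sum_{j'\in\bijlab(E(\mathcal{I}(\Gamma)))}\;\sum_{\substack{A\subset\{1,2,3,4\}\\ |E_A(\mathcal{I}(\Gamma))|=1}} I(\mathcal{I}(\Gamma),j',A) = \sum_{j\in\bijlab(E(\Gamma))}\;\sum_{\substack{A\subset\{1,2,3,4\}\\ |E_A(\Gamma)|=1}} I_\iota(\Gamma,j,A).$$
Applying this identity with $\Gamma=T_1$ and with $\Gamma=W_1$, the latter weighted by $3$, and adding, the left-hand side is exactly the quantity appearing in Proposition~\ref{prop_IHX_d}, while the right-hand side is precisely the $I_\iota$-version of the identity in Proposition~\ref{prop_IHX}, which vanishes. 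Hence the quantity in Proposition~\ref{prop_IHX_d} is zero.

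The argument is pure bookkeeping and I do not foresee a genuine obstacle. The only points that need care are the verification in the first step that the index sets $\{A:|E_A(\cdot)|=1\}$ coincide for a graph and its image under $\mathcal{I}$ (immediate, since $\mathcal{I}$ alters only edge orientations, not the underlying multigraph), and the fact that Lemma~\ref{lem_reverse} must be applied term by term \emph{before} summing — which is legitimate precisely because both reindexings involved, namely $j\mapsto j\circ\I^{-1}$ and the identity on the relevant $A$'s, are bijections.
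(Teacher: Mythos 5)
Your proposal is correct and is essentially the paper's own proof: the paper deduces Proposition \ref{prop_IHX_d} from Lemma \ref{lem_reverse} together with the $I_\iota$-version of Proposition \ref{prop_IHX}, exactly as you do. Your write-up merely makes explicit the bookkeeping (the bijection $j\mapsto j\circ\I^{-1}$ on labellings and the invariance of the index sets $\{A:|E_A(\cdot)|=1\}$ under $\mathcal{I}$) that the paper leaves implicit.
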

\begin{proof}
This equality follows from Lemma \ref{lem_reverse} and Proposition \ref{prop_IHX}. 
\end{proof}
Therefore this concludes the proof of Proposition \ref{prop_main}.

\section{One-edge faces}

\label{sec_one}
We will rephrase Proposition \ref{prop_IHX} in Section \ref{sec_rephrase}. See Proposition \ref{prop_rephrase}. For this we give some new definitions in Sections \ref{triv_orient} and \ref{sec_weight}. Some of these definitions can be found in Chapters $6.3$ and $7.1$ of \cite{lesbookzv2}.

\subsection{Trivalent graphs and orientations of configuration spaces} \label{triv_orient}
In the previous sections we worked with the concrete graphs $T_1,W_1,T_2,W_2$. For our purposes, it is now convenient to introduce abstract trivalent graphs. 

\begin{defi} \label{trivalent}
A \emph{trivalent graph} $\Gamma$ is a set $H(\Gamma)$ equipped with two partitions, $E(\Gamma)$ and $V(\Gamma)$. The elements of $H(\Gamma)$ are called \emph{half-edges}. The elements of $E(\Gamma)$ are unordered pairs of half-edges. They are called \emph{edges}. The elements of $V(\Gamma)$ are unordered triples of half-edges. They are called \emph{vertices}. An \emph{isomorphism of trivalent graphs} between $\Gamma$ and $\Gamma'$ is a bijection $H(\Gamma) \rightarrow H(\Gamma')$ compatible with the edge partition and the vertex partition. An \emph{automorphism of a trivalent graph} $\Gamma$ is an isomorphism from $\Gamma$ to itself.
\end{defi}
 Consider connected trivalent graphs with four vertices without looped edges. Such graphs have six edges. Up to isomorphism, there are two of them: \raisebox{-1.2mm}\tetra and \raisebox{-1.2mm}{\tatata}. We call \emph{tetrahedron graph} the first one, and \emph{double-theta} the second. The group of automorphisms of a tetrahedron graph is identified with the group of permutations of its vertices. The group of automorphisms of a double-theta is generated by the two obvious planar reflections in the drawing and the automorphism that permutes the two edges in the left-hand double-edge and preserves each vertex. The double-theta has $16$ automorphisms.

Let $\Gamma$ be a tetrahedron graph or a double-theta. We define an orientation of $C_{V(\Gamma)}(M)$ associated with an orientation of the set of vertices of $\Gamma$.\footnote{An \emph{orientation} of a finite set $X$ is a total order on $X$ up to permutation of signature $+1$.}

\begin{defi}
Let $\Gamma$ be a tetrahedron graph or a double-theta.  Let $b \colon V(\Gamma) \rightarrow \{1,2,3,4\}$ be a representative of an orientation of $V(\Gamma)$ and let $f_b \colon C_{V(\Gamma)}(M) \rightarrow C_4(M)$ be the associated diffeomorphism. We define the \emph{orientation of $C_{V(\Gamma)}(M)$ associated with the orientation of $V(\Gamma)$} represented by $b$ to be the orientation of $C_{V(\Gamma)}(M)$ which makes $f_b$ orientation-preserving.
\end{defi}

We now define an orientation of $C_{V(\Gamma)}(M)$ associated with a pair (edge-orientation,vertex-orientation) of $\Gamma$. See Definition \ref{def_vo}.
 \begin{defi}
A \emph{vertex-orientation} for a trivalent graph $\Gamma$ is the data of a cyclic order of the three half-edges belonging to the same vertex, for each vertex. An \emph{edge-orientation} is an order of the two half-edges belonging to the same edge, for each edge.
\end{defi}
\begin{defi}
Let $\Gamma$ be a trivalent graph immersed\footnote{An \emph{immersion of $\Gamma$ in $\R^2$} is the data of an injection $i \colon  V(\Gamma) \rightarrow \R^2$ and of a collection of proper embeddings $i_e \colon [0,1]\rightarrow\R^2$ for every edge $e=\{v_1,v_2\}\in E(\Gamma)$ such that:
\begin{itemize}
    \item $i_e(0)=i(v_1)$ and $i_e(1)=i(v_2)$,
    \item if $e_1,e_2,e_3$ are three edges adjacent to a vertex $v$, there is a small neighbourhood $D_v$ of $i(v)$ and an orientation-preserving homeomorphism $h_v \colon D_v \rightarrow D^2$ which sends $v$ to $0$ and the $i_{e_j}([0,1])\cap D_v$ to $[0,1],[0,\exp(\frac{2i\pi}{3})],[0,\exp(\frac{4i\pi}{3})]$. 
\end{itemize}} in $\R^2$. We define the \emph{vertex-orientation of $\Gamma$ associated with the immersion} to be the following: take the counterclockwise cyclic order of the half-edges at each vertex. In the following, immersed trivalent graphs will always be equipped with the vertex-orientation associated with the immersion.
\end{defi}

Let $b$ be an orientation of $V(\Gamma)$. We first define an orientation of $H(\Gamma)$ from $b$ and a vertex-orientation of $\Gamma$: it is the concatenation of the cyclic orientations of the half-edges adjacent to the same vertex given by the vertex-orientation, using the orientation $b$ of $V(\Gamma)$. We now define another orientation on $H(\Gamma)$ associated with an edge-orientation of $\Gamma$: it is the concatenation of the orientations of the pair of half-edges belonging to the same edge, using any orientation for the set of edges. These two orientations of $H(\Gamma)$ are used in the following definition. See also \cite[Chapter $7$, Section $1$]{lesbookzv2}.
\begin{defi}\label{def_vo}
Let $\Gamma$ be equipped with a vertex-orientation and edge-orientation. Let $b$ be the orientation of $V(\Gamma)$ such that the orientation of $H(\Gamma)$ associated with $b$ and the vertex-orientation of $\Gamma$ coincides with the orientation of $H(\Gamma)$ associated with the edge-orientation of $\Gamma$. The \emph{orientation of $C_{V(\Gamma)}(M)$ associated with the pair (edge-orientation, vertex-orientation) of $\Gamma$} is the orientation associated with $b$.
\end{defi}

In the following lemma, we compute these two orientations in the cases of $T_1$ and $W_1$ and show that they coincide. See Remark \ref{rque_match_or}.

\begin{lem}\label{lem_match_or}
    Let $\Gamma$ be in $\{T_1,W_1\}$. The orientation of $C_{V(\Gamma)}(M)$ associated with the pair (edge-orientation, vertex-orientation) coincides with the orientation of $C_{V(\Gamma)}(M)$ associated with the numbering of the vertices.
\end{lem}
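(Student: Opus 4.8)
The plan is to unwind Definition~\ref{def_vo} in the two cases and reduce the statement to checking that an explicit permutation of the twelve half-edges is even. Fix $\Gamma\in\{T_1,W_1\}$, let $v_1,v_2,v_3,v_4$ be the vertices of $\Gamma$ labelled $1,2,3,4$ in Figure~\ref{graphs}, and let $b_0$ be this numbering. By Definition~\ref{def_vo}, the orientation of $C_{V(\Gamma)}(M)$ associated with the pair (edge-orientation, vertex-orientation) equals the one associated with $b_0$ precisely when the two orientations of $H(\Gamma)$ appearing in that definition coincide: the one built from $b_0$ together with the vertex-orientation, and the one built from the edge-orientation. So it suffices to produce, for each $\Gamma$, a total order $\sigma_V$ of $H(\Gamma)$ representing the first orientation and a total order $\sigma_E$ representing the second, and to check that the permutation $\sigma_E^{-1}\sigma_V$ is even.

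To build $\sigma_V$ I would read off from the planar picture of $\Gamma$ in Figure~\ref{graphs} the counterclockwise cyclic order of the three half-edges at each vertex; since a cyclic order of a three-element set determines an orientation of that set, picking a representative ordered triple at each vertex and concatenating them in the order $v_1,v_2,v_3,v_4$ yields $\sigma_V$. This is well defined up to an even permutation: changing the triple at one vertex, or reordering the four size-three blocks by an even permutation of the vertices, changes $\sigma_V$ by an even permutation of $H(\Gamma)$ (here $|V(\Gamma)|=4$ is even and swapping two size-three blocks is odd, so swapping pairs of vertices an even number of times is harmless). To build $\sigma_E$ I would take, for each edge, the ordered pair (half-edge at the tail, half-edge at the head) specified by the arrow in Figure~\ref{graphs}, and concatenate these six pairs in any fixed order of the edges --- harmless, since reordering the six size-two blocks is always even. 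For $T_1$ the picture is the triangle $v_1v_2v_3$ with $v_4$ at its centroid, so the cyclic orders are immediate; for $W_1$ one has to be careful at the two double edges $\{v_1,v_2\}$ and $\{v_3,v_4\}$ (compare Figure~\ref{doubleedge}), using the drawn positions of the half-edges to decide, at each endpoint of a double edge, which half-edge is the inner one and hence where it sits in the cyclic order.

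With $\sigma_V$ and $\sigma_E$ written down explicitly for $T_1$ and for $W_1$, the last step is to compute the cycle type of $\sigma_E^{-1}\sigma_V$ in each case and check that its signature is $+1$ --- this is the computation announced in Remark~\ref{rque_match_or}. The main obstacle is exactly this bookkeeping: there are twelve half-edges, six edge directions and four counterclockwise cyclic orders to pin down, and the two places where a sign slip is easy are the vertex of $T_1$ at which all three incident edges point downward (whose cyclic order is the one requiring care) and the double-edge endpoints of $W_1$; once the combinatorial data are correct the signature computation itself is routine.
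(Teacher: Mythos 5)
Your proposal follows essentially the same route as the paper: reduce the statement, via Definition \ref{def_vo}, to checking that the two orderings of the twelve half-edges of $\Gamma$ (the concatenation of counterclockwise triples in the order of the vertex numbering, and the concatenation of tail--head pairs over the edges) differ by an even permutation, using that $2$-blocks move freely and cyclic changes in a triple are even. The paper simply carries out the parity check you defer, by naming the half-edges as in Figures \ref{T_1new} and \ref{TT_1new} and exhibiting an explicit chain of even moves ($3$-cycles and displacements of pairs) for each of $T_1$ and $W_1$; to make your argument complete you would just need to write out those two orders and perform this finite verification, which does come out even as claimed.
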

\begin{proof}
Let $\Gamma$ be $T_1$. In Figure \ref{T_1new}, the graph $T_1$ is drawn and its half-edges are labelled. With this notation, we have:
$$H(T_1)=\{a_1,a_2,b_1,b_2,c_1,c_2,d_1,d_2,e_1,e_2,f_1,f_2\},$$
and:
$$V(T_1)=\{\{c_1,b_1,f_1\},\{a_2,b_2,e_1\},\{e_2,c_2,d_1\},\{d_2,f_2,a_1\}\}.$$
The vertex-orientation $o(T_1)$ of $T_1$ associated with the immersion in Figure \ref{T_1new} is represented by:
$$\{(c_1,b_1,f_1),(a_2,b_2,e_1),(e_2,c_2,d_1),(d_2,f_2,a_1)\}.$$
The orientation of $H(T_1)$ associated with the numbering of $V(T_1)$ and the vertex-orientation $o(T_1)$ is represented by:
$$(c_1,b_1,f_1,a_2,b_2,e_1,e_2,c_2,d_1,d_2,f_2,a_1).$$
The orientation of $H(T_1)$ associated with the edge-orientation of the edges of $T_1$ is represented by:
$$(a_1,a_2,b_1,b_2,c_1,c_2,d_1,d_2,e_1,e_2,f_1,f_2).$$
The sequence \eqref{eq_check_or_T1} of orders shows that these two orientations coincide. This implies that the orientation of $C_{V(T_1)}(M)$ associated with the numbering of $V(T_1)$ coincides with the orientation of $C_{V(T_1)}(M)$ associated with the pair (edge-orientation, vertex-orientation). We start from the first order and apply $3$-cycles to the underlined triples. We freely move overlined pairs $(x_1,x_2)$.
\begin{equation}\label{eq_check_or_T1}
\begin{aligned}
(c_1,b_1,\underline {f_1,a_2,b_2},\overline{e_1,e_2},c_2,\overline{d_1,d_2},f_2,a_1) \\
(d_1,d_2,e_1,e_2,c_1,\overline{b_1,b_2},f_1,\underline{a_2,c_2,f_2},a_1) \\
(b_1,b_2,d_1,d_2,e_1,e_2,c_1,\overline{f_1,f_2},\underline{a_2,c_2,a_1}) \\
(b_1,b_2,d_1,d_2,e_1,e_2,f_1,f_2,\overline{c_1,c_2},\overline{a_1,a_2}).
\end{aligned}
\end{equation}
Let $\Gamma$ be $W_1$. With the notation of Figure \ref{TT_1new}, we have:
$$V(W_1)=\{\{f_1,b_1,c_1\},\{e_1,c_2,b_2\},\{d_2,f_2,a_1\},\{a_2,e_2,d_1\}\}.$$ The vertex-orientation $o(W_1)$ of $W_1$ associated with the drawing in Figure \ref{TT_1new} is represented by:
 $$o(W_1)=\{(f_1,b_1,c_1),(e_1,c_2,b_2),(d_2,f_2,a_1),(a_2,e_2,d_1)\}.$$
The orientation of $H(W_1)$ obtained as the ordered concatenation of the cyclic orderings of the half-edges at each vertex is represented by:
$$(f_1,b_1,c_1,e_1,c_2,b_2,d_2,f_2,a_1,a_2,e_2,d_1).$$ 
The sequence \eqref{eq_check_or_W1} of orders shows that this orientation coincides with the orientation of $H(W_1)$ associated with the edge-orientation of $W_1$. We use the same strategy as before.
\begin{equation}\label{eq_check_or_W1}
\begin{aligned}
(f_1,b_1,c_1,\underline{e_1,c_2,b_2},d_2,f_2,\overline{a_1,a_2},e_2,d_1) \\
(a_1,a_2,f_1,b_1,\overline{c_1,c_2},b_2,e_1,\underline{d_2,f_2,e_2},d_1) \\
(a_1,a_2,c_1,c_2,f_1,\overline{b_1,b_2},\overline{e_1,e_2},\underline{d_2,f_2,d_1}) \\
(a_1,a_2,b_1,b_2,c_1,c_2,e_1,e_2,f_1,f_2,\overline{d_1,d_2}).
 \end{aligned}
 \end{equation}
\end{proof}

\begin{rque}\label{rque_match_or}
    A permutation of $\{1,2,3,4\}$ of signature $+1$ induces an orientation-preserving diffeomorphism of $C_4(M)$. In the proofs of Theorem \ref{thm_secondcount} and Theorem \ref{thm_Lcount}, a coherent choice of the numberings (up to even permutation) of the four graphs in Figure \ref{graphs} is needed. Our choice was precisely made in order to obtain Lemma \ref{lem_match_or}. 
\end{rque}

Let $e$ be in $E(\Gamma)$. Let $p_e \colon C_{V(\Gamma)}(M) \rightarrow C_2(M)$ be the composition of the natural restriction map from $C_{V(\Gamma)}(M)$ to $C_e(M)$ by the map from $C_e(M)$ to $C_2(M)$ induced by the order of the two vertices in $e$ given by the edge-orientation. 

Let $b \colon V(\Gamma) \rightarrow \{1,2,3,4\}$ be any bijection. Let $A$ be a subset of $V(\Gamma)$ such that $|A|\geq1$. We define:
$$F(\Gamma,A)=f_b^{-1}\bigl(F(b(A))\bigr),$$
and:
$$I(\Gamma,j,o(\Gamma),A)=\int_{[0,1]\times F(\Gamma,A)} \bigwedge_{e\in E(\Gamma)} (\indi\times p_e)^*(\omega_{j(e)}),$$
where $C_{V(\Gamma)}(M)$ is equipped with the orientation associated with the edge-orientation of $\Gamma$ and $o(\Gamma)$, and where $F(\Gamma,A)$ is oriented as part of the boundary of $C_{V(\Gamma)}(M)$.

\subsection{A space of diagrams and a weight system}\label{sec_weight}
Consider the function $w$ defined on $\{T_1,T_2,W_1,W_2\}$ by $w(T_1)=w(T_2)=1$ and $w(W_1)=w(W_2)=2$. It is the restriction of a linear function $w$ on a particular vector space $A^c_2(\emptyset)$, see Lemma \ref{def_lin}. This space $A^c_2(\emptyset)$ is a quotient of the $\R$-vector space $D_2^c(\emptyset)$ formally generated by vertex-orientation-preserving isomorphism classes of connected trivalent graphs with four vertices equipped with a vertex-orientation.\footnote{Here, \say{c} stands for \say{connected} and $2$ is half the number of vertices. The $\emptyset$ symbol is there to keep our notation consistent with \cite{lesbookzv2}, where more general spaces of diagrams are defined.} See Definition \ref{def_jac}. Such functions are often called \emph{weight systems}. We use $w$ to rephrase Proposition \ref{prop_IHX}. See Proposition \ref{prop_rephrase}.

\begin{defi}\label{def_jac}
An \emph{antisymmetry relation} is an element of $D_2^c(\emptyset)$ of the form $[\Gamma] + [\Gamma']$ where $\Gamma$ and $\Gamma'$ are immersed in $\R^2$ with identical images outside a disk neighbourhood of a vertex where they look like Figure \ref{AS}.

A \emph{Jacobi relation} is an element of $D_2^c(\emptyset)$ of the form $[\Gamma_1] + [\Gamma_2]+ [\Gamma_3]$ where $\Gamma_1$, $\Gamma_2$ and $\Gamma_3$ are immersed in $\R^2$ with identical images outside of a disk neighbourhood of a vertex where they look like Figure \ref{jaco}.

We let $\A_2^c(\emptyset)$ be the quotient of $D_2^c(\emptyset)$ by antisymmetry and Jacobi relations.
 \end{defi}
 
\begin{figure}
\centering
\begin{subfigure}{0.4\textwidth}
\centering
\antisym
\caption{Antisymmetry relation.}
\label{AS}
\end{subfigure}
\begin{subfigure}{0.4\textwidth}
\centering
\Jacobi
\caption{Jacobi relation.}
\label{jaco}
\end{subfigure}
\end{figure}

\begin{figure}
\centering
\begin{tikzpicture}
    \useasboundingbox (-1,-1.25) rectangle (7,0.5);

    \coordinate (a) at (-1,0);
    \coordinate (b) at (1,0);
    \coordinate (c) at (0,-1);
    \coordinate (d) at (-0.5,-0.5);
    \draw[very thick] (a)--(c);
\draw[very thick] (b)--(c);
\draw[very thick] (a) .. controls (0,0.25) .. (b);
\draw[very thick] (a) .. controls (0,-0.25) .. (b);
\draw[very thick] (c) .. controls (-0.5,-1).. (d);

\draw (a) node{$\bullet$};
\draw (b) node{$\bullet$};
\draw (c) node{$\bullet$};
\draw (d) node{$\bullet$};

\draw[very thick] (1.25,-0.5)--(1.75,-0.5);
\draw[very thick] (1.5,-0.25)--(1.5,-0.75);

\begin{scope}[xshift=3cm]
\coordinate (e) at (-1,0);
    \coordinate (f) at (1,0);
    \coordinate (g) at (0,-1);
    \coordinate (h) at (0,-0.25);
    \draw[very thick] (e)--(g);
\draw[very thick] (f)--(g);
\draw[very thick] (e) .. controls (0,0.25) .. (f);
\draw[very thick] (e)--(h)--(f);
\draw[very thick] (g) .. controls (-0.5,-1).. (h);

\draw (e) node{$\bullet$};
\draw (f) node{$\bullet$};
\draw (g) node{$\bullet$};
\draw (h) node{$\bullet$};

\draw[very thick] (1.25,-0.5)--(1.75,-0.5);
\draw[very thick] (1.5,-0.25)--(1.5,-0.75);
    
\end{scope}

\begin{scope}[xshift=6cm]
\coordinate (i) at (-1,0);
    \coordinate (j) at (1,0);
    \coordinate (k) at (0,-1);
    \coordinate (l) at (0,0.25);
    \draw[very thick] (i)--(k);
\draw[very thick] (j)--(k);
\draw[very thick] (i)--(l)--(j);
\draw[very thick] (i) .. controls (0,-0.25) .. (j);
\draw[very thick] (k) .. controls (-0.5,-1).. (l);

\draw (i) node{$\bullet$};
\draw (j) node{$\bullet$};
\draw (k) node{$\bullet$};
\draw (l) node{$\bullet$};
    
\end{scope}
    
\end{tikzpicture}
\caption{A Jacobi relation.}
\label{jac_relation}
\end{figure}
The following lemma is classical. It will be used as a definition.
\begin{lem} \label{def_lin}
There is a linear form $w$ on $\A_2^c(\emptyset)$ such that $w\left(\left[\raisebox{-1.3mm}{\tetra}\right]\right)=1$ and $w\left(\left[\raisebox{-1.5mm}{\tatata}\right]\right)=2.$ 
\end{lem}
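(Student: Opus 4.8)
The plan is to lift $w$ to a linear form $\tilde w$ on $D_2^c(\emptyset)$ that vanishes on all antisymmetry and Jacobi relations; it then descends to $\A_2^c(\emptyset)$, and it suffices to arrange that it takes the value $1$ on the tetrahedron graph and $2$ on the double-theta. Since, up to isomorphism, the only connected trivalent graphs with four vertices are the tetrahedron graph and the double-theta, $D_2^c(\emptyset)$ is spanned by classes $[\Gamma,o]$ with $\Gamma$ one of these two graphs and $o$ a vertex-orientation; and for a fixed $\Gamma$, iterating the antisymmetry relation identifies $[\Gamma,o]$ with $(-1)^{d(o,o')}[\Gamma,o']$ in $\A_2^c(\emptyset)$, where $d(o,o')$ counts the vertices at which the cyclic orders prescribed by $o$ and $o'$ differ. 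So I would fix once and for all a reference pair $(\text{tetrahedron},o_T)$ and $(\text{double-theta},o_W)$ and set
\[
\tilde w([\Gamma,o]) \;=\; (-1)^{d(o_\Gamma,\,o)}\,w_0(\Gamma), \qquad w_0(\text{tetrahedron})=1,\quad w_0(\text{double-theta})=2,
\]
where $o$ is transported to the reference model of $\Gamma$ along any chosen isomorphism. Two things must then be verified: that the formula is independent of the isomorphism chosen (well-definedness on $D_2^c(\emptyset)$), and that $\tilde w$ kills every Jacobi relation.

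For well-definedness, observe first that the parity of $d(o,\phi_*o)$ is independent of $o$ for any automorphism $\phi$ of $\Gamma$: flipping $o$ at a vertex $u$ also flips $\phi_*o$ at $\phi(u)$, which changes $d$ by $0$ or $2$. Hence $\phi \mapsto (-1)^{d(o,\phi_*o)}$ is a homomorphism $\mathrm{Aut}(\Gamma)\to\{\pm1\}$, and well-definedness amounts to showing it is trivial. For the tetrahedron, $\mathrm{Aut}(\Gamma)$ is the symmetric group on the four vertices, generated by transpositions, and a direct computation with the natural vertex-orientation shows that a transposition $(uv)$ reverses the cyclic order precisely at the two vertices distinct from $u$ and $v$ — an even number. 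For the double-theta, $\mathrm{Aut}(\Gamma)$ is the order-$16$ group described after Definition \ref{trivalent}, generated by the two planar reflections and the swap of the two edges of the left double edge; the double-edge swap reverses the cyclic order at exactly the two endpoints of that double edge, and each planar reflection, acting without fixed vertices and reversing orientation in the plane, reverses the cyclic order at all four vertices — even in both cases. So the homomorphism is trivial and $\tilde w$ is a well-defined linear form on $D_2^c(\emptyset)$.

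It remains to see that $\tilde w$ vanishes on Jacobi relations. A Jacobi move (Figure \ref{jaco}) keeps the vertex set of size four and only slides a half-edge along the three edges at a chosen vertex, so each of the three terms of a Jacobi relation is again a four-vertex trivalent graph; disconnected terms are zero in $D_2^c(\emptyset)$ and are discarded. Up to the symmetries already used there are only finitely many such relations to inspect, and in each one the surviving terms turn out to be either two tetrahedron-terms carrying opposite $\tilde w$-signs (the third term being a double-theta or a disconnected graph, whose contribution is forced to cancel) or two double-theta-terms with opposite signs, so that $\tilde w$ of the relation is $0$; note in particular that a three-connected-tetrahedron relation, or an odd combination, cannot occur, since $\pm1\pm1\pm1$ and $\pm2\pm2\pm2$ never vanish. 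The representative computation is the Jacobi move applied to one of the two ordinary edges of the double-theta: it reconnects the two double edges in the two possible ways, each producing a tetrahedron, but with vertex-orientations differing at one vertex, giving $[\text{double-theta}]=[\text{tetrahedron},o]-[\text{tetrahedron},o']=2[\text{tetrahedron}]$ in $\A_2^c(\emptyset)$, consistently with $\tilde w$. Hence $\tilde w$ descends to the required $w$ on $\A_2^c(\emptyset)$. The main obstacle is precisely this last step — the enumeration of the Jacobi cancellations, equivalently the classical fact that $\A_2^c(\emptyset)$ is one-dimensional and generated by the tetrahedron class with $[\text{double-theta}]=2[\text{tetrahedron}]$; alternatively one may sidestep it by importing a ready-made weight system, e.g. the normalized $\mathfrak{sl}_2$ weight system, which respects antisymmetry and the Jacobi relation automatically by antisymmetry of the bracket and the Jacobi identity, and which one then normalizes to take the value $1$ on the tetrahedron.
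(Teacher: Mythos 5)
Your overall strategy is the same as the paper's (lift $w$ to $D_2^c(\emptyset)$ by an orientation sign, check that automorphisms reverse the vertex-orientation at an even number of vertices, then check that Jacobi relations are killed), and your parity checks for the automorphism groups of the tetrahedron and of the double-theta are correct and more detailed than what the paper writes. But two genuine gaps remain. First, $D_2^c(\emptyset)$ is \emph{not} spanned by tetrahedron and double-theta classes: Definition \ref{trivalent} allows looped edges, and connected trivalent graphs with four vertices and a looped edge do occur as generators -- the paper's proof opens precisely by disposing of them, observing that $2[\Gamma,o(\Gamma)]$ is an antisymmetry relation when $\Gamma$ has a looped edge. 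As written, your $\tilde w$ is therefore not defined on all of $D_2^c(\emptyset)$. The fix (set $\tilde w=0$ on looped graphs) is easy, but it forces you to also inspect Jacobi relations whose terms mix looped graphs and double-thetas (for instance, expanding the four-valent vertex of the collapsed graph that carries a loop there yields one looped term and two double-theta terms); your enumeration never considers these, while the ``disconnected terms'' you do worry about cannot occur, since by Definition \ref{def_jac} all three terms are connected four-vertex trivalent graphs.

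Second, the Jacobi step -- which you yourself identify as the main obstacle -- is asserted rather than carried out, and the cancellation pattern you describe cannot be correct: with the paper's all-plus-sign convention $[\Gamma_1]+[\Gamma_2]+[\Gamma_3]$, two tetrahedron terms with \emph{opposite} contributions together with a double-theta term sum to $\pm 2\neq 0$; in the one relevant relation (Figure \ref{jac_relation}) the two tetrahedron terms enter with the \emph{same} contribution and the double-theta with the opposite-signed value $\mp 2$. Your displayed identity $[\text{double-theta}]=[\text{tetrahedron},o]-[\text{tetrahedron},o']$ is likewise inconsistent with that convention: combined with $[\Gamma_1]+[\Gamma_2]+[\Gamma_3]=0$ and $[\text{tetrahedron},o']=-[\text{tetrahedron},o]$ it would force the double-theta class to vanish rather than equal $2[\text{tetrahedron}]$. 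The paper closes this step economically: since the automorphism group of the tetrahedron acts transitively on its edges, up to antisymmetry the only Jacobi relation involving a tetrahedron is that of Figure \ref{jac_relation}, which $w$ kills by a direct sign check, and every Jacobi relation not involving a tetrahedron lies in the linear span of the antisymmetry relations. Some argument of this kind (or an honest finite enumeration including the looped cases, or a genuine computation of a Lie-algebra weight system on \emph{both} graphs -- normalizing it to $1$ on the tetrahedron does not by itself give the value $2$ on the double-theta) is needed to complete your proof.
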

\begin{proof}
Note that if $[\Gamma,o(\Gamma)]$ is the class of a graph with a loop edge equipped with a vertex-orientation, then $2[\Gamma,o(\Gamma)]$ is an antisymmetry relation. Let $\Gamma$ be a tetrahedron graph or a double-theta. Any automorphism of $\Gamma$ acts on the set of vertex-orientations of $\Gamma$ by changing the vertex-orientation at an even number of vertices. Hence one can define a linear form $w$ with the required properties on the quotient of $D_c^2(\emptyset)$ by antisymmetry relations. 

 The group of automorphisms of a tetrahedron graph acts transitively on its edges. Thus, up to antisymmetry, the only Jacobi relation involving a tetrahedron graph is the Jacobi relation of Figure \ref{jac_relation}. It is sent to $0$ by $w$. Note that this Jacobi relation involves one double-theta. Jacobi relations which do not involve a tetrahedron graph are in the linear span of the antisymmetry relations. Thus $w$ induces a linear form on $\A_2^c(\emptyset)$ with the required properties.
\end{proof}
\begin{defi}
     A \emph{labelling} of a graph is an element $j$ of $\bijlab(E(\Gamma))$. A \emph{labelled edge-oriented trivalent graph} $\Gamma$ is a trivalent graph equipped with an edge-orientation and a labelling. An \emph{isomorphism of labelled edge-oriented trivalent graphs}  between $(\Gamma,j)$ and $(\Gamma',j')$ is an isomorphism of trivalent graphs $f \colon H(\Gamma) \rightarrow H(\Gamma')$ compatible with the edge orientations, and such that $j'\circ f =j$, where $f$ also denotes the induced map from $E(\Gamma)$ to $E(\Gamma')$. 
\end{defi}

\begin{exemple}\label{ex_auto_graphs}
The graph $T_1$ has $3$ automorphisms compatible with its edge-orientation.  The graph $W_1$ has $2$ automorphisms compatible with its edge-orientation. 
\end{exemple}

Let $\Gamma$ be in $\{T_1,W_1\}$. We define a \emph{labelled $\Gamma$} to be a pair $(\Gamma',j')$ where $j'$ is a labelling of $\Gamma'$ and $\Gamma'$ is an edge-oriented trivalent graph isomorphic to $\Gamma$ as an edge-oriented graph. Let $D_1$ be the set of isomorphism classes of labelled $T_1$ and $W_1$. As a consequence of our definitions, we have the following lemma.

\begin{lem}\label{lem_rephrase_u}
We have the equality:
\begin{multline*}
\sum_{[(\Gamma,j)]\in D_1} \sum_{\substack{A\subset V(\Gamma)\\|E_A(\Gamma)|=1}} w(\Gamma,o(\Gamma))I(\Gamma,j,o(\Gamma),A)= \frac{1}{3} \sum_{j\in\bijlab(E(T_1))} \sum_{\substack{A\subset \{1,2,3,4\}\\|E_A(T_1)|=1}} I(T_1,j,A)  \\
+\sum_{j\in\bijlab(E(W_1))} \sum_{\substack{A\subset \{1,2,3,4\}\\|E_A(W_1)|=1}} I(W_1,j,A).
\end{multline*}
\end{lem}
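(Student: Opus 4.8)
The plan is to unwind the definitions on both sides and to see the equality as an instance of a simple orbit-counting principle for group actions, just as indicated by the automorphism counts recorded in Example \ref{ex_auto_graphs}. The left-hand side is a sum over the set $D_1$ of isomorphism classes of \emph{labelled} copies of $T_1$ and $W_1$ (as edge-oriented trivalent graphs), weighted by $w(\Gamma,o(\Gamma))$, which equals $1$ for copies of $T_1$ and $2$ for copies of $W_1$; the right-hand side is a sum over the concrete labellings $j \in \bijlab(E(\Gamma))$ of the two fixed graphs $\Gamma \in \{T_1,W_1\}$, with the extra factor $\tfrac13$ in front of the $T_1$-sum. So the two sides differ only in whether one sums over abstract isomorphism classes or over concrete labellings of a fixed representative, and the combinatorial factors $\tfrac13$, $1$ should precisely compensate for the sizes of the relevant automorphism groups.

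First I would fix, for $\Gamma \in \{T_1,W_1\}$, the concrete representative drawn in Figure \ref{graphs}, with its numbering of vertices and its edge-orientation. A labelled copy of $\Gamma$ is a pair $(\Gamma',j')$ with $\Gamma'$ edge-orientedly isomorphic to $\Gamma$; choosing such an isomorphism $\phi\colon H(\Gamma)\to H(\Gamma')$ identifies $j'$ with a labelling $j = j'\circ\phi \in \bijlab(E(\Gamma))$, and two choices of $\phi$ differ by an edge-orientation-preserving automorphism of $\Gamma$. Hence the map $(\Gamma',j')\mapsto$ (the $\mathrm{Aut}_{eo}(\Gamma)$-orbit of the corresponding $j$) is a bijection between the set of isomorphism classes of labelled copies of $\Gamma$ and the orbit set $\bijlab(E(\Gamma))/\mathrm{Aut}_{eo}(\Gamma)$. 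By Example \ref{ex_auto_graphs}, $|\mathrm{Aut}_{eo}(T_1)| = 3$ and $|\mathrm{Aut}_{eo}(W_1)| = 2$. The next step is to check that the summand $\sum_{A} I(\Gamma,j,o(\Gamma),A)$ (sum over $A\subset V(\Gamma)$ with $|E_A(\Gamma)|=1$) is invariant under the action of $\mathrm{Aut}_{eo}(\Gamma)$ on $j$: an edge-orientation-preserving automorphism $\psi$ of $\Gamma$ permutes the vertices, hence permutes the admissible subsets $A$ while preserving the condition $|E_A(\Gamma)|=1$, and it permutes the maps $p_e$ among themselves compatibly with the relabelling $j\mapsto j\circ\psi^{-1}$; since $\psi$ has signature $+1$ on $V(\Gamma)$ (this is where Lemma \ref{lem_match_or} / Remark \ref{rque_match_or} is used, to know that the chosen orientation of $C_{V(\Gamma)}(M)$ is the one from the numbering, so that an even vertex permutation is orientation-preserving), the induced change of variables on $[0,1]\times F(\Gamma,A)$ is orientation-preserving, and the integral is unchanged. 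Therefore each $\mathrm{Aut}_{eo}(\Gamma)$-orbit of labellings contributes a common value, appearing once on the left with weight $w(\Gamma,o(\Gamma))$ and $|\mathrm{Aut}_{eo}(\Gamma)|$ times on the right.

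Putting this together: grouping the right-hand side by $\mathrm{Aut}_{eo}(\Gamma)$-orbits, the $T_1$-sum equals $3$ times the sum of orbit-values, so $\tfrac13\sum_{j} = \sum_{\text{orbits}} I$, matching the left-hand contribution of labelled $T_1$'s which carry weight $w(T_1,o(T_1))=1$; likewise the $W_1$-sum equals $2$ times the sum of orbit-values, i.e. $\sum_{j} = 2\sum_{\text{orbits}} I = w(W_1,o(W_1))\sum_{\text{orbits}} I$, matching the left-hand contribution of labelled $W_1$'s. Adding the two cases gives exactly the claimed identity. I would also note, as a consistency check on the definitions, that $I(\Gamma,j,o(\Gamma),A)$ defined in Section \ref{triv_orient} coincides with $I(\Gamma,j,A)$ of the earlier sections once one picks the numbering $b$ of $V(\Gamma)$ from Figure \ref{graphs}, which is immediate from $F(\Gamma,A)=f_b^{-1}(F(b(A)))$ and $p_e = p_{e}\circ f_b$ under that identification.

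The main obstacle is the orientation bookkeeping in the invariance-under-automorphism step: one must make sure that an edge-orientation-preserving automorphism really does act by an orientation-preserving change of variables on $[0,1]\times F(\Gamma,A)$, including the induced orientation on the codimension-one face $F(\Gamma,A)$ as part of the boundary of $C_{V(\Gamma)}(M)$. This is exactly what the machinery of Section \ref{triv_orient}, and in particular Lemma \ref{lem_match_or}, is designed to handle — the automorphism permutes $V(\Gamma)$ evenly, so it preserves the orientation of $C_{V(\Gamma)}(M)$ and hence of its faces — but it is the one place where the argument is not a pure formality. Everything else is the standard orbit-stabiliser count, and the specific numbers $3$ and $2$ (hence the factor $\tfrac13$ and the absence of a factor on the $W_1$-sum) are forced by Example \ref{ex_auto_graphs}.
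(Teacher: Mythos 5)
Your proposal is correct and follows essentially the same route as the paper: identify isomorphism classes of labelled $T_1$'s and $W_1$'s with orbits of labellings under the edge-orientation-preserving automorphism groups of orders $3$ and $2$ (Example \ref{ex_auto_graphs}), check that the inner sums are constant on orbits via an orientation-preserving change of variables, and let the weights $w(T_1)=1$, $w(W_1)=2$ absorb the orbit sizes to produce the factor $\tfrac13$ and the absence of a factor on the $W_1$-sum. The only (cosmetic) difference is the orientation bookkeeping: you argue that such automorphisms induce even vertex permutations (true, and easily checked: $3$-cycles for $T_1$, the identity on vertices for $W_1$), whereas the paper instead uses that the product $w(\Gamma,o(\Gamma))I(\Gamma,j,o(\Gamma),A)$ is independent of the vertex-orientation and transports $o(T_1)$, $o(W_1)$ along the chosen isomorphism.
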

\begin{proof}
We first check that the left-hand side of the above equality is well-defined. Let $(\Gamma,j)$ be a labelled $T_1$ (resp. $W_1$). Let $f$ be an isomorphism of edge-oriented graphs from $H(T_1)$ (resp. $H(W_1)$) to $H(\Gamma)$. Let $A$ be a subset of $V(\Gamma)$ such that $|A|\geq 2$. Note that $w(\Gamma,o(\Gamma))I(\Gamma,j,o(\Gamma),A)$ does not depend on the chosen vertex-orientation $o(\Gamma)$. Choosing $o(\Gamma)$ to be $f(o(T_1))$ (resp. $f(o(W_1))$) shows that it is equal to
$I(T_1,j\circ f,f^{-1}(A))$
(resp. to
$2I(W_1,j\circ f,f^{-1}(A))$). Let $j_1$ be any labelling of $T_1$ (resp. of $W_1$) such that $(\Gamma,j)$ and $(T_1,j_1)$ (resp. $(W_1,j_1)$) are isomorphic as labelled edge-oriented graphs.
The quantity
$$\sum_{\substack{A\subset V(\Gamma)\\|E_A(\Gamma)|=1}} 
w(\Gamma,o(\Gamma))I(\Gamma,j,o(\Gamma),A)$$
only depends on the isomorphism class of $(\Gamma,j)$ as a labelled edge-oriented graph. Indeed, it is equal to $$\sum_{\substack{A\subset \{1,2,3,4\}\\|E_A(\Gamma)|=1}} 
I(T_1,j_1,A),$$
resp. to
$$2\sum_{\substack{A\subset \{1,2,3,4\}\\|E_A(\Gamma)|=1}} 
I(W_1,j_1,A).$$

Now consider the projection from the set of pairs $(T_1,j_1)$ (resp. $(W_1,j_1)$) with $j_1$ a labelling of $T_1$ (resp. of $W_1$) on $D_1$ that sends $(\Gamma,j_1)$ to $[(\Gamma,j_1)]$. This projection is onto, and each isomorphism class of labelled $T_1$ (resp. of labelled $W_1$) has three (resp. two) preimages under this projection. See Example \ref{ex_auto_graphs}. This concludes the proof of the lemma.
\end{proof}
If $\Gamma$ is a labelled $T_1$ or $W_1$, the product $w(\Gamma,o(\Gamma))I(\Gamma,j,o(\Gamma),A)$ does not depend on the chosen vertex-orientation $o(\Gamma)$. We will denote it by $w(\Gamma)I(\Gamma,j,A)$. As a corollary to Lemma \ref{lem_rephrase_u}, to prove Proposition \ref{prop_IHX} it suffices to prove the following equivalent proposition. 
\begin{prop}\label{prop_rephrase}
We have:
$$
\sum_{[(\Gamma,j)]\in D_1} \sum_{\substack{A\subset V(\Gamma)\\|E_A(\Gamma)|=1}} w(\Gamma)I(\Gamma,j,A)=0.
$$

\end{prop}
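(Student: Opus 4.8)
The plan is to analyze a single one-edge face, to reduce the integral attached to it to a ``contracted'' integral by integrating along an $S^2$-fiber, and then to group the resulting terms so that each group is the value of the weight system $w$ on a Jacobi relation, hence zero by Lemma~\ref{def_lin}. First I would fix $[(\Gamma,j)]\in D_1$ and $A\subset V(\Gamma)$ with $|E_A(\Gamma)|=1$, so that $A=\{v,w\}$ and there is a unique edge $e_0\in E_A(\Gamma)$ joining $v$ and $w$. By Definition~\ref{def_facepasinf} (transported through $f_b$), $F(\Gamma,A)$ is an $S^2$-bundle over $C_{(V(\Gamma)\setminus A)\cup\{a\}}(M)$, the projection $q$ remembering only the positions of the three effective points: the collapse point $a$ and the two vertices outside $A$. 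Since $|E_A(\Gamma)|=1$, every edge $e\neq e_0$ has at most one endpoint in $A$, so, by the explicit description of the restrictions $p_e|_{F(A)}$ recalled above, $p_e$ factors as $\tilde p_e\circ q$ for a smooth map $\tilde p_e\colon C_{(V(\Gamma)\setminus A)\cup\{a\}}(M)\to C_2(M)$; the edge $e_0$ is the only one whose map sees the fiber, and $p_{e_0}$ carries each fiber of $F(\Gamma,A)$ diffeomorphically onto the corresponding $S^2$-fiber of $U\check M\subset\partial C_2(M)$, compatibly with $q$.

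Next I would integrate over the fiber. Writing the integrand as $(\indi\times p_{e_0})^*(\omega_{j(e_0)})\wedge(\indi\times q)^*\bigl(\bigwedge_{e\neq e_0}(\indi\times\tilde p_e)^*(\omega_{j(e)})\bigr)$ and pushing forward along the $S^2$-fibers of $\indi\times q$, Fubini gives
$$I(\Gamma,j,A)=\pm\int_{[0,1]\times C_{(V(\Gamma)\setminus A)\cup\{a\}}(M)}\Bigl(\int_{S^2\text{-fiber}}(\indi\times p_{e_0})^*(\omega_{j(e_0)})\Bigr)\,\bigwedge_{e\neq e_0}(\indi\times\tilde p_e)^*(\omega_{j(e)}).$$
The inner fiber integral is the integral of $\omega_{j(e_0)}|_{\{t\}\times U\check M}$ over an $S^2$-fiber of $U\check M$, a constant equal to $1$: $C_2(M)$ has the rational homology of $S^2$, that fiber generates $H_2$, and the normalization $\int_{S^2}\omega_{S^2}=1$ of Definition~\ref{def_propform} fixes the pairing, the homotopies $\omega_i$ inheriting it slice by slice. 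Hence $w(\Gamma)\,I(\Gamma,j,A)=\pm\,w(\Gamma)\,J\bigl(\Gamma/e_0,\,j|_{E(\Gamma)\setminus\{e_0\}}\bigr)$, where $J$ depends only on the contracted graph $\Gamma/e_0$ — a connected graph with three vertices, one of them $4$-valent, and five labelled edge-oriented edges — and no longer on $j(e_0)$; the overall sign is fixed by comparing the boundary orientation of $F(\Gamma,A)$ with the product orientation, and one checks that $w(\Gamma)I(\Gamma,j,A)$ is also independent of the vertex-orientation, as asserted just before the statement.

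Finally I would reorganize the double sum according to the contracted labelled graph, via the assignment $\bigl([(\Gamma,j)],A\bigr)\mapsto[\Gamma/e_0,\,j|_{E(\Gamma)\setminus\{e_0\}}]$. Over a fixed contracted graph $G$, the preimage consists of the ways to split the $4$-valent vertex of $G$ into two trivalent vertices joined by a new edge; there are three such splittings, and since $G$ has no loop at the $4$-valent vertex each gives an honest connected loopless trivalent graph with four vertices. A direct combinatorial check, which uses the specific edge-orientations chosen for $T_1$ and $W_1$ (cf. Remark~\ref{rque_match_or}) and the automorphism counts of Example~\ref{ex_auto_graphs}, shows that these three graphs $\Gamma^{(1)},\Gamma^{(2)},\Gamma^{(3)}$ again lie in $D_1$, and that, equipped with suitable vertex-orientations and taken with the multiplicities with which they occur in $D_1$, one has that $[\Gamma^{(1)},o(\Gamma^{(1)})]+[\Gamma^{(2)},o(\Gamma^{(2)})]+[\Gamma^{(3)},o(\Gamma^{(3)})]$ is a Jacobi relation of Definition~\ref{def_jac}, the three graphs differing only inside a disk around the split vertex. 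Therefore the partial sum over this preimage equals $\pm J(G)\,w\bigl([\Gamma^{(1)},o(\Gamma^{(1)})]+[\Gamma^{(2)},o(\Gamma^{(2)})]+[\Gamma^{(3)},o(\Gamma^{(3)})]\bigr)=\pm J(G)\,w(\text{Jacobi relation})=0$ by Lemma~\ref{def_lin}. Summing over all contracted graphs $G$ proves Proposition~\ref{prop_rephrase}; the same argument with every $\omega_i$ replaced by $\iota^*\omega_i$ gives the version of the statement with $I$ replaced by $I_\iota$ needed for Proposition~\ref{prop_IHX}.

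The main obstacle I expect is the orientation and multiplicity ledger of the last two paragraphs. One must pin down the sign produced by the fiber integration so that it is genuinely independent of the edge-orientation of $e_0$ and of the orientation chosen for the new edge in each splitting, hence that $w(\Gamma)I(\Gamma,j,A)$ is a well-defined function of the contracted labelled graph; one must verify that the three splittings, with compatible vertex-orientations and the correct $D_1$-multiplicities, assemble into an honest Jacobi relation; and one must confirm that all three splittings actually land in $D_1$, i.e. are edge-orientation-isomorphic to $T_1$ or $W_1$. This last point is a finite check on the edge-orientations of $T_1$ and $W_1$, and it is precisely here that the way these graphs were chosen (so that they contain an oriented cycle and satisfy Lemma~\ref{lem_match_or}) is used. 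By contrast, the analytic input — Stokes (already used in Section~\ref{sec_sketch}), Fubini for the fiber integration, and the homological normalization of propagators — is routine, as in the arguments of Bott and Taubes and of Kontsevich.
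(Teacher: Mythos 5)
Your plan reproduces the paper's core decomposition --- group the one-edge terms according to the collapse of the distinguished edge, observe that each preimage is an IHX triple of graphs in $D_1$, and kill each triple with the weight system of Lemma \ref{def_lin} --- but the mechanism you use to compare the three integrals inside a triple is genuinely different. You integrate $\omega_{j(e_0)}$ over the $S^2$-fibers of the one-edge face, so that each $I(\Gamma,j,A)$ becomes $\pm 1$ times a common integral attached to the contracted graph; the paper (Lemmas \ref{lem_IHXu} and \ref{lem_IHXd}) never integrates along the fiber, but instead exhibits explicit diffeomorphisms between the three faces (induced by the transpositions $(34)$, $(23)$ and the identity on the vertex numberings) intertwining the maps $p_e$, concluding directly that the three integrals coincide up to sign, with the signs pinned down by Lemma \ref{lem_match_or}. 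Your route gives a cleaner intermediate statement but needs one input the paper avoids: Definition \ref{def_propform} says nothing about $\omega$ on $U\check{M}$, so the claim that the fiber integral equals $1$ is not a normalization but a homological fact (a fiber of $U\check{M}$ is rationally homologous in $C_2(M)$ to a sphere $S^2_\infty(M)\times\{x\}$ on which $\omega=G_M^*(\omega_{S^2})$, and closedness of the pushforward makes it constant slice by slice); you sketch this correctly, and in fact only the equality of the fiber integrals across the three splittings is needed, not the value $1$. The finite checks you defer are exactly the content of the paper's Lemmas \ref{lem_preimage1} and \ref{lem_preimage2}: with the new edge oriented there are a priori six splittings of the $4$-valent vertex, and the unique source-of-three-edges vertex of $T_1$ and $W_1$ is what cuts this down to exactly three lying in $D_1$; one must also verify, as the paper does, that the two labelled $T_1$'s occurring in a triple are non-isomorphic as labelled edge-oriented graphs (otherwise the sum over the preimage would miscount and the $1-2+1=0$ cancellation would fail), and the orientation ledger is settled by Lemma \ref{lem_match_or} together with the explicit face diffeomorphisms. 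With those checks carried out, your argument goes through and yields the $I_\iota$ variant verbatim, as you note.
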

\subsection{Sketch of proof of Proposition \ref{prop_rephrase}}
\label{sec_rephrase}

Let us prove Proposition \ref{prop_rephrase}. For $l\in\pert$, let $D_1(l) \subset D_1$ be the set of $[(\Gamma,j)]\in D_1$ such that $|E_{j^{-1}(l)}(\Gamma)|=1$.\footnote{When $e$ is in $e_{uv}(\Gamma)$, we let $E_e(\Gamma)$ be $E_{\{u,v\}}(\Gamma)$.} We write:
$$\sum_{[(\Gamma,j)]\in D_1} \sum_{\substack{A\subset V(\Gamma)\\|E_A(\Gamma)|=1}} w(\Gamma)I(\Gamma,j,A)= \sum_{l\in\pert} \sum_{[(\Gamma,j)]\in D_1(l)} w(\Gamma)I(\Gamma,j,j^{-1}(l)).$$
Let $l\in\pert$ be a label. We now prove the following equality:
 $$\sum_{[(\Gamma,j)]\in D_1(l)} w(\Gamma)I(\Gamma,j,j^{-1}(l))=0.$$
 For a representative $(\Gamma,j)$ of an element in $D_1(l)$, define $c_l(\Gamma,j)$ to be the class of the labelled graph obtained from $\Gamma$ by collapsing the edge $j^{-1}(l)$. We obtain a function $c_l$ from $D_1(l)$ to the set of isomorphism classes of edge-oriented graphs with $3$ vertices of valences $3,3,4$ labelled by $\pert \setminus \{l\}$. The preimages of $c_l$ are a partition of $D_1(l)$.

Let us analyse the image of $c_l$. Let $(\Gamma,j)$ be a representative of an element of $D_1(l)$. The graph $\Gamma$ has a unique vertex $v_1$ that is the source of three edges. Consider the edges $e$ of $\Gamma$ such that $|E_{e}(\Gamma)|=1$. Among those, we call \emph{type-two edges} the edges adjacent to $v_1$, and \emph{type-one edges} the others. See Figure \ref{edges}. Let $D_1(l,1)$ (resp. $D_1(l,2)$) be the set of $[(\Gamma,j)]\in D_1(l)$ such that $j^{-1}(l)$ is a type-one (resp. a type-two) edge. The set $D_1(l)$ is the disjoint union of $D_1(l,1)$ and $D_1(l,2)$.

Consider the function $g$ defined on the image of $c_l$ that forgets the labelling. Then $|g\circ c_l(D_1(l,i))|=1$ for $i\in \{1,2\}$. See Figure \ref{graph_collapse}. Moreover, the two graphs depicted in Figure \ref{graph_collapse} are not isomorphic to each other as edge-oriented graphs. Indeed, they don't have the same number of edges that leave the unique vertex of valence four. The image of $c_l$ is the set of isomorphism classes of pairs $(\Gamma,j)$ where $\Gamma$ is one of the two graphs of Figure \ref{graph_collapse} and $j\colon E(\Gamma)\rightarrow \pert \setminus \{l\}$ is a bijection. Each set of our partition of $D_1(l)$ contains at least one labelled $T_1$.
 
 In Section \ref{sec_typeone} we describe $c_l^{-1}(c_l(\{[\Gamma_1,j_1]\}))$ where $(\Gamma_1,j_1)$ is a labelled $T_1$ in $D_1(l,1)$. See Lemma \ref{lem_preimage1}. In Lemma \ref{lem_IHXu} we show that:
$$\sum_{[(\Gamma,j)]\in c_l^{-1}(c_l(\{[\Gamma_1,j_1]\}))} w(\Gamma)I(\Gamma,j,j^{-1}(l))=0.$$
In Section \ref{sec_typetwo}, we describe $c_l^{-1}(c_l(\{[\Gamma_1',j_1']\}))$ where $(\Gamma_1',j_1')$ is a labelled $T_1$ in $D_1(l,2)$. See Lemma \ref{lem_preimage2}. In Lemma \ref{lem_IHXd} we show that:
$$\sum_{[(\Gamma,j)]\in c_l^{-1}(c_l(\{[\Gamma_1',j_1']\}))} w(\Gamma)I(\Gamma,j,j^{-1}(l))=0.$$
This concludes the proof of Propositon \ref{prop_rephrase} up to Lemmas \ref{lem_IHXu} and \ref{lem_IHXd}.
 
\begin{figure}
\centering
\begin{subfigure}{0.4\textwidth}
\centering
\begin{tikzpicture}
\useasboundingbox (-1,-1) rectangle (1,1);
\coordinate (a) at (0,1);
\coordinate (b) at (0,0);
\coordinate (c) at (0,-1);

\draw (a)--(b) node[midway, sloped]{$>$};
\draw (a).. controls (1,-0.25) .. (c) node[midway, sloped]{$>$} ;
\draw (a).. controls (-1,-0.25) .. (c) node[midway, sloped]{$>$} ;
\draw (b).. controls (0.5,-0.5) .. (c) node[midway, sloped]{$<$} ;
\draw (b).. controls (-0.5,-0.5) .. (c) node[midway, sloped]{$>$};
\draw (a) node{$\bullet$};
\draw (b) node{$\bullet$};
\draw (c) node{$\bullet$};
\end{tikzpicture}

\caption{On $D_1(l,1)$.}
\end{subfigure}
\begin{subfigure}{0.4\textwidth}
\centering
\begin{tikzpicture}
\useasboundingbox (-1,-1) rectangle (1,1);
\coordinate (a) at (0,1);
\coordinate (b) at (-30:1);
\coordinate (c) at (-150:1);

\draw (a)--(b) node[midway, sloped]{$<$};
\draw (b)--(c) node[midway, sloped]{$>$};
\draw (c)--(a) node[midway, sloped]{$<$};
\draw (a).. controls (180:1) .. (c) node[midway, sloped]{$<$} ;
\draw (a).. controls (0:1) .. (b) node[midway, sloped]{$>$} ;
\draw (a) node{$\bullet$};
\draw (b) node{$\bullet$};
\draw (c) node{$\bullet$};
\end{tikzpicture}
\caption{On $D_1(l,2)$.}
\end{subfigure}

\caption{The map $g\circ c_l$.}
\label{graph_collapse}
\end{figure}
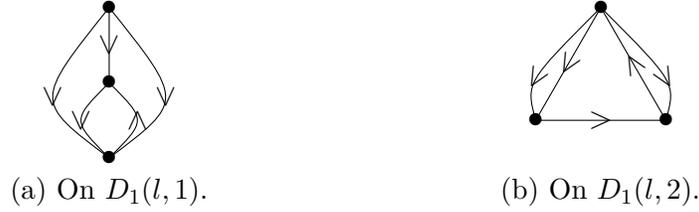

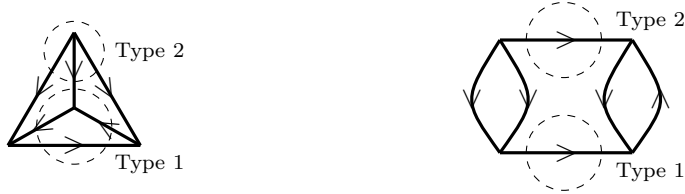
\begin{figure}[H]
\centering
\begin{subfigure}{0.4\textwidth}
\centering
\typeoneandtwo
\end{subfigure}
\begin{subfigure}{0.4\textwidth}
\centering
  \begin{tikzpicture}
\useasboundingbox (4.75,-0.7) rectangle (7,1.2);
\begin{scope}[yshift=0.3 cm]
\draw[very thick] (5,-0.5) -- (6.76,-0.5) node[midway,sloped]{$>$};
\draw[very thick] (5,1) -- (6.76,1) node[midway,sloped]{$>$};
\draw[very thick] (5,-0.5) .. controls (5.5,0.2) ..(5,1) node[midway,sloped]{$<$};
\draw[very thick] (5,-0.5) .. controls (4.5,0.2) ..(5,1) node[midway,sloped]{$<$};
\draw[very thick] (6.76,-0.5) .. controls (7.26,0.2) ..(6.76,1) node[midway,sloped]{$>$};
\draw[very thick] (6.76,-0.5) .. controls (6.26,0.2) ..(6.76,1) node[midway,sloped]{$<$};
\draw[dashed] (5.85,-0.5) circle (0.5);
\draw[dashed] (5.85,1) circle (0.5);
\draw (7,1.25) node{\scriptsize Type $2$};
\draw (7,-0.75) node{\scriptsize Type $1$};
\end{scope}
\end{tikzpicture}  
\end{subfigure}
\caption{Type $1$ and type $2$ edges.}
\label{edges}
\end{figure}

\subsection{Type-one edges}\label{sec_typeone}

Let $(\Gamma_1,j_1)$ be a labelled $T_1$ in $D_1(l,1)$. Let $e=j^{-1}_1(l)$. Let $v_2$ and $v_3$ be its two vertices, such that $e$ goes from $v_2$ to $v_3$. Let $v_1$ be the vertex adjacent to the three type-two edges, and $v_4$ the remaining vertex. Let $a=e_{v_4v_2}$, $b=e_{v_1v_2}$, $c=e_{v_1v_3}$, $d=e_{v_3v_4}$ and $f=e_{v_1v_4}$. For each edge $x$, let $x_1$ and $x_2$ denote its two ordered half-edges. See Figure \ref{T_1new}. We have the following fact.

\begin{figure}
    \centering
    \begin{subfigure}{0.3\textwidth}
     \resizebox{\textwidth}{!}{
\Tunnew}
    \caption{The graph $(\Gamma_1,j_1)(c)$.}
    \label{T_1new}
    \end{subfigure}
    \begin{subfigure}{0.3\textwidth}
    \centering
     \resizebox{\textwidth}{!}{
    \Wunnew}
    \caption{The graph $(\Gamma_1,j_1)(a)$.}
    \label{TT_1new}
\end{subfigure}
\begin{subfigure}{0.3\textwidth}
    \centering
     \resizebox{\textwidth}{!}{
    \TunIHXnew}
    \caption{The graph $(\Gamma_1,j_1)(b)$.}
    \label{T_1IHXnew}
\end{subfigure}
\begin{subfigure}{0.3\textwidth}
\resizebox{\textwidth}{!}{
\begin{tikzpicture}[scale=1.5]
\useasboundingbox (-1,-0.7) rectangle (1,1.2);
\draw[very thick] (-0.88,-0.5) -- (0.88,-0.5) node[midway,sloped]{$>$} node[near start, below,scale=0.7]{\scriptsize$e_1$} node[near end, below,scale=0.7]{\scriptsize$e_2$};
\draw[very thick] (0.88,-0.5) -- (0,0) node[midway,sloped]{$<$} node[near end, below,scale=0.7]{\scriptsize$d_2$};
\draw (0.55,-0.4) node[scale=0.7]{\scriptsize$d_1$};
\draw[very thick] (0,0) -- (-0.22,-0.125) node[near end, sloped]{$<$} node[near start, below,scale=0.7]{\scriptsize$a_1$} ;
\draw[very thick] (-0.66,-0.375) -- (-0.88,-0.5) node[near start, sloped]{$<$};
\draw[very thick] (0,1) -- (-0.22,0.675) node[near end,sloped]{$<$} ;
\draw[very thick] (-0.66,-0.175) -- (-0.88,-0.5) node[near start,sloped]{$<$} ;
\draw (-0.3,0.75) node[scale=0.7]{\scriptsize$b_1$};
\draw[very thick] (0,1) -- (0,0) node[midway,sloped]{$>$};
\draw (0.08,0.75) node[scale=0.7]{\scriptsize$f_1$};
\draw (0.08,0.25) node[scale=0.7]{\scriptsize$f_2$};
\draw[very thick] (0,1) --(0.22,0.625) node[near end,sloped]{$>$} ;
\draw[very thick] (0.66,-0.175) --(0.88,-0.5) node[near start,sloped]{$>$}  ;
\draw (0.3,0.75) node[scale=0.7]{\scriptsize$c_1$};
\draw (0.7,0) node[scale=0.7]{\scriptsize$x_2$};
\draw (-0.88,-0.5) node{$\bullet$};
\draw (0.88,-0.5) node{$\bullet$};
\draw (0,0) node{$\bullet$};
\draw (0,1) node{$\bullet$};

\end{tikzpicture}}
    \caption{Constructing $(\Gamma_1,j_1)(x)$.}
    \label{construction}
\end{subfigure}
\begin{subfigure}{0.4\textwidth}
\centering
\begin{tikzpicture}[scale=1.8]
\useasboundingbox (-1,-1.2) rectangle (1,1);
\coordinate (a) at (0,1);
\coordinate (b) at (0,0);
\coordinate (c) at (0,-1);

\draw[very thick] (a)--(b) node[midway, sloped]{$>$};
\draw[very thick] (a).. controls (1,-0.25) .. (c) node[midway, sloped]{$>$} ;
\draw[very thick] (a).. controls (-1,-0.25) .. (c) node[midway, sloped]{$>$} ;
\draw[very thick] (b).. controls (0.5,-0.5) .. (c) node[midway, sloped]{$<$} ;
\draw[very thick] (b).. controls (-0.5,-0.5) .. (c) node[midway, sloped]{$>$};
\draw (a) node{$\bullet$};
\draw (b) node{$\bullet$};
\draw (c) node{$\bullet$};
\draw (a)++(-0.3,0) node{\scriptsize$b_1$};
\draw (a)++(0.3,0) node{\scriptsize$c_1$};
\draw (a)++(0.15,-0.3) node{\scriptsize$f_1$};
\draw (a)++(0.15,-0.75) node{\scriptsize$f_2$};
\draw (b)++(0.3,0) node{\scriptsize$d_2$};
\draw (b)++(-0.3,0) node{\scriptsize$a_1$};
\draw (c)++(-0.3,0) node{\scriptsize$b_2$};
\draw (c)++(0.3,0) node{\scriptsize$c_2$};
\draw (c)++(-0.2,0.4) node{\scriptsize$a_2$};
\draw (c)++(0.2,0.4) node{\scriptsize$d_1$};
\end{tikzpicture}
\caption{The collapse of $(\Gamma_1,j_1)$ along $e$.}
\end{subfigure}
\caption{The set $c_l^{-1}(c_l(\{[\Gamma_1,j_1]\}))$.}
\label{fig_set_typeone}
\end{figure}

\begin{lem}\label{lem_preimage1}
The set $c_l^{-1}(c_l(\{[\Gamma_1,j_1]\}))$ has three elements. An explicit set of representatives is given by the three labelled edge-oriented graphs of Figures \ref{T_1new}, \ref{TT_1new}, and \ref{T_1IHXnew}.
\end{lem}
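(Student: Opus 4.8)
The plan is to compute the fibre of the edge-collapse map $c_l$ directly. First I would spell out the collapse operation: collapsing an edge $e$ of a trivalent graph deletes the two half-edges of $e$ and merges its two endpoints $v_2,v_3$ into a single $4$-valent vertex, whose set of half-edges is $(v_2\cup v_3)\setminus\{e_1,e_2\}$ (thinking of a vertex as the set of its half-edges), everything else — the other vertices, the remaining edges, the edge-orientations, the remaining labels — being unchanged. Applied to the labelled $T_1$ graph $(\Gamma_1,j_1)$ with $e=j_1^{-1}(l)$ a type-one edge, this yields the explicit labelled edge-oriented graph $G'$ with three vertices of valences $3,3,4$ shown in the sub-figure of Figure \ref{fig_set_typeone} depicting the collapse of $(\Gamma_1,j_1)$ along $e$; in the notation of Figure \ref{T_1new} its unique $4$-valent vertex $v_{23}$ carries the four half-edges $a_2,b_2,c_2,d_1$, and its five edges receive five distinct labels, so $G'$ has no non-trivial automorphism compatible with the labelling and the edge-orientation (such an automorphism would fix every edge, hence, preserving edge-orientations, every half-edge).

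Next I would invert the collapse. An element of $c_l^{-1}\bigl(c_l(\{[\Gamma_1,j_1]\})\bigr)$ is the class of a labelled edge-oriented trivalent graph $(\Gamma,j)$ with four vertices whose collapse along $j^{-1}(l)$ is isomorphic to $G'$; because $\operatorname{Aut}(G')$ is trivial, each such $\Gamma$ is obtained from $G'$ in an essentially canonical way by blowing up $v_{23}$, namely: choose a partition of the four half-edges of $v_{23}$ into two pairs, introduce two new trivalent vertices carrying these pairs together with one half-edge each of a new edge $e$ labelled $l$, and orient $e$. Conversely every such blow-up collapses back to $G'$, no loop can appear (the new edge joins the two new vertices, and $G'$ is loop-free), and two blow-ups are isomorphic as labelled edge-oriented graphs if and only if they use the same partition and the same orientation of $e$ — again because $G'$ is rigid. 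Since a four-element set has exactly three partitions into two pairs, there are $3\times 2$ blow-ups up to isomorphism, each of them a tetrahedron or a double-theta according to the partition.

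Finally I would cut down to $D_1$. For a fixed partition the two orientations of $e$ give non-isomorphic edge-oriented graphs of the same underlying type, and $D_1$ contains exactly one isomorphism class of edge-oriented tetrahedron, that of $T_1$, and exactly one of edge-oriented double-theta, that of $W_1$; hence at most one orientation per partition yields a graph in $D_1$. On the other hand all three partitions do: the partition $\{\{a_2,b_2\},\{c_2,d_1\}\}$ returns $(\Gamma_1,j_1)$ itself (Figure \ref{T_1new}); the partition separating the two half-edges of $v_{23}$ lying on edges running to $v_1$ from the two lying on edges running to $v_4$ gives a double-theta, realised with the correct orientation as the labelled $W_1$ of Figure \ref{TT_1new}; and the remaining partition gives the labelled $T_1$ of Figure \ref{T_1IHXnew}. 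Therefore the three graphs of Figures \ref{T_1new}, \ref{TT_1new} and \ref{T_1IHXnew} are exactly the elements of $c_l^{-1}\bigl(c_l(\{[\Gamma_1,j_1]\})\bigr)$. I expect the genuine work to be this last identification — checking from the explicit half-edge labels in Figures \ref{T_1new}--\ref{construction} that the three drawn graphs collapse to a common $G'$, that they are pairwise non-isomorphic as labelled edge-oriented graphs, and that the two tetrahedral ones lie in the isomorphism class of $T_1$ rather than in that of the other tetrahedron with a source-of-three vertex — but each of these is a finite, mechanical verification.
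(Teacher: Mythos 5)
Your proposal is correct and follows essentially the same route as the paper: reconstruct the four-vertex graph from the collapsed labelled graph (whose distinct labels and edge-orientations make it rigid), use the edge-orientation constraints characterizing labelled $T_1$'s and $W_1$'s to cut the a priori possibilities down to three, and identify those three with the graphs of Figures \ref{T_1new}, \ref{TT_1new} and \ref{T_1IHXnew}. The only difference is bookkeeping: the paper first uses the \say{unique vertex that is the source of at least two edges} property to force $d_1$ into the vertex containing $e_2$ and then lists the three choices of its companion half-edge, whereas you enumerate the six blow-ups (three pair-partitions times two orientations of the new edge) and discard one orientation per partition by the same orientation constraint.
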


\begin{proof}
Let $(\Gamma,j)$ be an edge-oriented labelled trivalent graph such that the graph obtained by collapsing $\Gamma$ along $j^{-1}(l)$ is a representative of $c_l([\Gamma_1,j_1])$. Up to isomorphism of labelled edge-oriented graphs, the graph $\Gamma$ satisfies:
\begin{enumerate}
  \item  $H(\Gamma)=H(\Gamma_1)$, 
  \item $E(\Gamma)=E(\Gamma_1)$,
  \item the edge-orientation of $\Gamma$ is the same as the edge-orientation of $\Gamma_1$,
  \item $j=j_1$,
  \item $\{c_1,b_1,f_1\}$ and $\{a_1,d_2,f_2\}$ are two vertices of $\Gamma$. 
\end{enumerate}
Assume moreover that $(\Gamma,j)$ is a labelled $T_1$ or $W_1$. There is a unique vertex of $\Gamma$ from at least two edges start. It is $\{c_1,b_1,f_1\}$. So $d_1$ and $e_1$ belong to different vertices. The graph $(\Gamma,j)$ is thus determined by the half-edge belonging to the same vertex as $e_2$ and $d_1$. If this half-edge is $a_2$ (resp. $b_2$,$c_2$), we let $(\Gamma_1,j_1)(a)$ (resp. $(\Gamma_1,j_1)(b)$, $(\Gamma_1,j_1)(c)$) be the graph obtained in this way. The graph $(\Gamma_1,j_1)(a)$ is a labelled $W_1$. The graph $(\Gamma_1,j_1)(b)$ is a labelled $T_1$, and we have:
$$[(\Gamma_1,j_1)(b)]\neq [(\Gamma_1,j_1)(c)].$$
\end{proof}

\begin{lem}\label{lem_IHXu}
We have:
$$\sum_{[(\Gamma,j)]\in c_l^{-1}(c_l(\{[\Gamma_1,j_1]\}))} w(\Gamma)I(\Gamma,j,j^{-1}(l))=0.$$
\end{lem}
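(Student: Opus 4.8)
The plan is to carry out the ``Jacobi (IHX) relation'' cancellation of Bott--Taubes and Kontsevich; see also \cite[Chapter~9]{lesbookzv2}. By Lemma~\ref{lem_preimage1} the sum has exactly three terms, coming from $(\Gamma_1,j_1)(a)$, which is a labelled $W_1$ and so has $w=2$, and from $(\Gamma_1,j_1)(b)$ and $(\Gamma_1,j_1)(c)$, which are labelled $T_1$ and so have $w=1$. Put $e:=j_1^{-1}(l)$, oriented from $v_2$ to $v_3$, and $A:=\{v_2,v_3\}$; then $E_A(\Gamma)=\{e\}$ for each of the three graphs. Since these graphs share the vertex set $\{v_1,v_2,v_3,v_4\}$ and the same $A$, the faces $F\bigl((\Gamma_1,j_1)(a),A\bigr)$, $F\bigl((\Gamma_1,j_1)(b),A\bigr)$ and $F\bigl((\Gamma_1,j_1)(c),A\bigr)$ are \emph{one and the same} manifold $F$: by Definition~\ref{def_facepasinf} and Example~\ref{ex_facepasinf}, with $v_2$ as the distinguished vertex of $A$, it is the bundle over $\check{C}_{\{v_1,v_2,v_4\}}(\check{M})$ with fibre $\check{S}_{\{v_2,v_3\}}(T_{c(v_2)}M)\cong U_{c(v_2)}M$; write $\pi$ for its projection. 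They carry a priori different orientations, namely the three boundary orientations coming from the orientations of $C_{V(\Gamma)}(M)$ attached to the common edge-orientation and a vertex-orientation of each graph (Definition~\ref{def_vo}).

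The next step is to observe that the six forms being wedged together also agree for the three graphs. From the explicit description of the maps $p_{e'}$ on faces $F(A)$ (the case analysis preceding Lemma~\ref{lem_degenerateW}): on $F$ the map $p_e$ sends $(c_0,c_1)$ to $\bigl(c_0(v_2),[c_1(v_3)-c_1(v_2)]\bigr)\in U\check{M}\subset\partial C_2(M)$, restricting on each fibre of $\pi$ to the canonical identification with a fibre of $U\check{M}$; and every other $p_{e'}$ factors as $p_{e'}|_F=\overline{p}_{e'}\circ\pi$ for a map $\overline{p}_{e'}\colon\check{C}_{\{v_1,v_2,v_4\}}(\check{M})\to C_2(M)$. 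Now the collapsed labelled edge-oriented graph $c_l(\Gamma_1,j_1)$ --- three vertices $v_1$, $v_4$ and a four-valent vertex $\bar v$ obtained by merging $v_2$ and $v_3$ --- is the same for the three choices, and once $\bar v$ is identified with $v_2$ both $\overline{p}_{e'}$ and the label $j_1(e')$ are read off from $c_l(\Gamma_1,j_1)$, hence coincide for the three graphs. Therefore, on $[0,1]\times F$,
$$\bigwedge_{e'\in E(\Gamma)\setminus\{e\}}(\indi\times p_{e'})^*(\omega_{j_1(e')})=(\indi\times\pi)^*(\Xi_0)$$
for one closed form $\Xi_0$, of degree $10=\dim\bigl([0,1]\times\check{C}_{\{v_1,v_2,v_4\}}(\check{M})\bigr)$, not depending on the graph.

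Fibre-integrating $(\indi\times\pi)^*(\Xi_0)\wedge(\indi\times p_e)^*(\omega_l)$ along $\indi\times\pi$ and using the projection formula then gives, for each of the three graphs,
$$I(\Gamma,j_1,A)=\sigma(\Gamma)\Bigl(\int_{S^2}\omega_l|_{\text{fibre of }U\check{M}}\Bigr)\int_{[0,1]\times\check{C}_{\{v_1,v_2,v_4\}}(\check{M})}\Xi_0,$$
where $\sigma(\Gamma)\in\{-1,1\}$ compares the boundary orientation of $F$ for $\Gamma$ with a fixed reference orientation on the base times the fibre. The fibre integral equals $\pm1$: $\omega_l$ is closed on $C_2(M)$, and this $2$-sphere generates $H_2(C_2(M);\Q)$ (Proposition~\ref{bordcdeu}) and is homologous to an $S^2$--fibre of $S^2_\infty(M)\times\check{M}$, over which $\omega_l$ restricts to $G_M^*(\omega_{S^2})$ and hence integrates to $\pm1$ by Definition~\ref{def_propform}; this sign is the same for the three graphs. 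Consequently $w(\Gamma)I(\Gamma,j_1,A)=\mu(\Gamma)\,J_0$, where $J_0:=\pm\int_{[0,1]\times\check{C}_{\{v_1,v_2,v_4\}}(\check{M})}\Xi_0$ and $\mu(\Gamma)\in\{-2,-1,1,2\}$, with $|\mu((\Gamma_1,j_1)(a))|=2$ and $|\mu((\Gamma_1,j_1)(b))|=|\mu((\Gamma_1,j_1)(c))|=1$; the number $\mu(\Gamma)$ is well defined because $w(\Gamma)I(\Gamma,j_1,A)$ is.

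It then remains to prove the sign identity
$$\mu\bigl((\Gamma_1,j_1)(a)\bigr)+\mu\bigl((\Gamma_1,j_1)(b)\bigr)+\mu\bigl((\Gamma_1,j_1)(c)\bigr)=0,$$
for then, abbreviating these three terms by $\mu_a,\mu_b,\mu_c$, one gets $\sum_{[(\Gamma,j)]\in c_l^{-1}(c_l(\{[\Gamma_1,j_1]\}))}w(\Gamma)I(\Gamma,j,j^{-1}(l))=J_0\bigl(\mu_a+\mu_b+\mu_c\bigr)=0$. This is exactly where the weight system enters: the graphs $(\Gamma_1,j_1)(a)$, $(\Gamma_1,j_1)(b)$, $(\Gamma_1,j_1)(c)$ are the three resolutions of the four-valent vertex $\bar v$ of $c_l(\Gamma_1,j_1)$ into two trivalent vertices joined by an edge, and once the orientations are pinned down the numbers $\mu_a,\mu_b,\mu_c$ are precisely the coefficients with which $w$ evaluates the corresponding Jacobi relation of Figure~\ref{jaco}, which vanishes by Lemma~\ref{def_lin}. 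The main obstacle is this last orientation computation: one must track the orientation of $C_{V(\Gamma)}(M)$ of Definition~\ref{def_vo} through the blow-ups defining $F$ and identify its dependence on the vertex-orientation at $\bar v$ with the signs of the Jacobi relation; for these three explicit graphs this is done by hand, exactly as in \cite{botttaubes} and \cite{ko}. Lemma~\ref{lem_IHXd} is entirely analogous, the four-valent vertex of the collapsed graph then being adjacent to the source vertex of the three type-two edges.
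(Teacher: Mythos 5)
Your overall strategy (collapse the face to a fibration with $S^2$-fibre, push forward $\omega_l$, and reduce to a sign identity coming from the Jacobi relation) is a legitimate alternative to the paper's argument, which never fibre-integrates: the paper instead compares the three face integrals directly, via explicit diffeomorphisms of the faces induced by the identity and by the transposition $(34)$ of the numberings of Figure \ref{fig_set_typeone}, proving $I((\Gamma_1,j_1)(c),e)=I((\Gamma_1,j_1)(b),e)=-I((\Gamma_1,j_1)(a),e)$, and then uses $w(\Gamma_1(c))+w(\Gamma_1(b))-w(\Gamma_1(a))=1+1-2=0$. Your preliminary reductions are essentially sound: after identifying the vertex sets of the three graphs through the collapse (two vertices are literally common, the two endpoints of $e$ are matched by which contains $e_1$ resp.\ $e_2$), the three faces are the same manifold, the maps $p_{e'}$ for $e'\neq e$ factor through the projection to the base and depend only on the collapsed labelled graph, and the fibrewise integral of $\omega_l$ is the pairing of a propagating class with the generator $[S]$ of $H_2(C_2(M);\Q)$, hence $\pm 1$ and independent of the graph and of $t$.

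The genuine gap is that you never establish the identity $\mu_a+\mu_b+\mu_c=0$, which is the entire content of the lemma. You assert that the signs $\sigma(\Gamma)$ (boundary orientation of the common face, computed from the orientation of $C_{V(\Gamma)}(M)$ of Definition \ref{def_vo}, which genuinely differs between the three graphs because their vertex-orientations and numberings differ) combine with the weights $1,1,2$ exactly as in the Jacobi relation of Figure \ref{jaco}, and you defer the verification to ``done by hand, exactly as in Bott--Taubes and Kontsevich.'' But since the magnitudes $|\mu_a|=2$, $|\mu_b|=|\mu_c|=1$ are trivial, everything hinges on showing that $\mu_b$ and $\mu_c$ have the same sign and $\mu_a$ the opposite one; nothing in your text pins these signs down, and an error of a single sign (e.g.\ in comparing the orientation of Definition \ref{def_vo} with the numbering orientation, or in the boundary-orientation of the face, or in the fibre orientation used in the projection formula) would make the sum $\pm 2 J_0$ or $\pm 4 J_0$ instead of $0$. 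This is precisely the work the paper does via Lemma \ref{lem_match_or} (matching the (edge-orientation, vertex-orientation) orientation with the numbering orientation for $T_1$ and $W_1$), the explicit orientation-reversing/preserving statements for the face diffeomorphisms, and the signed Jacobi relation behind Lemma \ref{def_lin}. Until you carry out that orientation bookkeeping for the three explicit graphs of Figure \ref{fig_set_typeone} (or an equivalent computation in your fibration picture), the proof is incomplete at its decisive step.
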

\begin{proof}
Applying Lemma \ref{lem_preimage1}, it suffices to prove the following equality:
   \begin{align*}
w(\Gamma_1(c))I((\Gamma_1,j_1)(c),e)+w(\Gamma_1(b))I((\Gamma_1,j_1)(b),e) +w(\Gamma_1(a))I((\Gamma_1,j_1)(a),e)=0.
\end{align*}
Let $x$ be in $\{a,b,c\}$. We equip $V(\Gamma_1(x))$ with the numbering given in Figure \ref{fig_set_typeone}. We let $o(\Gamma_1(x))$ be the vertex-orientation of $\Gamma_1(x)$ associated with the drawing in Figure \ref{fig_set_typeone}. Lemma \ref{lem_match_or} implies that the orientation of $C_{V(\Gamma_1(x))}(M)$ associated with the pair (edge-orientation, $o(\Gamma_1(x))$) is the same as the orientation associated with the orientation of $V(\Gamma_1(x))$ represented by $i\mapsto v_i$. Lemma \ref{def_lin} implies
$$w(\Gamma_1(c),o(\Gamma_1(c)))-w(\Gamma_1(a),o(\Gamma_1(a)))+w(\Gamma_1(b),o(\Gamma_1(b)))=0.$$
Let us prove that:
$$I((\Gamma_1,j_1)(c),o(\Gamma_1(c)),e)=I((\Gamma_1,j_1)(b),o(\Gamma_1(b)),e)=-I((\Gamma_1,j_1)(a),o(\Gamma_1(a)),e).$$
Consider the bijection between $V(\Gamma_1(c))$ and $V(\Gamma_1(a))$ induced by the transposition $(34)$. This map induces a diffeomorphism $f$ from $F(\Gamma_1(c),e)$ to $F(\Gamma_1(a),e)$. The diffeomorphism $f$ is orientation-reversing. Finally we have on $[0,1]\times F(\Gamma_1(c),e)$:
$$(\indi\times f)^* \left(\bigwedge_{x\in E(\Gamma_1(a))} (\indi\times p_x)^*(\omega_{j_1(a)(x)})\right)=\bigwedge_{x\in E(\Gamma_1(c))} (\indi\times p_x)^*(\omega_{j_1(a)(c)}).$$
This proves:
$$I((\Gamma_1,j_1)(c),o(\Gamma_1(c)),e)=-I((\Gamma_1,j_1)(a),o(\Gamma_1(a)),e).$$
The equality $I((\Gamma_1,j_1)(c),o(\Gamma_1(c)),e)=I((\Gamma_1,j_1)(b),o(\Gamma_1(b)),e)$ follows from similar considerations. Indeed the bijection between $V(\Gamma_1(c))$ and $V(\Gamma_1(b))$ associated with $\id_{\{1,2,3,4\}}$ induces a diffeomorphism between $F(\Gamma_1(c),e)$ and $F(\Gamma_1(b),e)$. This diffeomorphism is orientation-preserving. We conclude as in the previous paragraph.
\end{proof}

\subsection{Type-two edges} \label{sec_typetwo}
Let $(\Gamma_1',j_1')$ be a labelled $T_1$ in $D_1(l,2)$. Let $e=(j_1')^{-1}(l)$. Let $v_1$ and $v_2$ be its two vertices, such that $e$ goes from $v_1$ to $v_2$. Let $v_3$ be the vertex connected to $v_2$ by an edge going from $v_2$ to $v_3$, and $v_4$ the remaining vertex. Let $a=e_{v_1v_4}$, $b=e_{v_1v_3}$, $c=e_{v_2v_3}$, $d=e_{v_4v_2}$, and $f=e_{v_3v_4}$. See Figure \ref{T_1bisnew}.

\begin{figure}
    \centering
    \begin{subfigure}{0.3\textwidth}
    \resizebox{\textwidth}{!}{
    \Tunbisnew}
    \caption{The graph $(\Gamma_1',j_1')(c)$.}
    \label{T_1bisnew}
    \end{subfigure}
    \begin{subfigure}{0.3\textwidth}
    \resizebox{\textwidth}{!}{
    \Wunbisnew}
    \caption{The graph $(\Gamma_1',j_1')(a)$.}
    \label{TT_1bisnew}
\end{subfigure}
\begin{subfigure}{0.3\textwidth}
\resizebox{\textwidth}{!}{
    \TunIHXbisnew}
    \caption{The graph $(\Gamma_1',j_1')(b)$.}
    \label{T_1IHXbisnew}
\end{subfigure}
\begin{subfigure}{0.3\textwidth}
\resizebox{\textwidth}{!}{
\begin{tikzpicture}[scale=1.5]
\useasboundingbox (-1,-0.7) rectangle (1,1.2);
\draw[very thick] (-0.88,-0.5) -- (-0.44,-0.5) node[near end, sloped]{$>$}; 
\draw[very thick] (0.44,-0.5)--(0.88,-0.5) node[near start, sloped]{$>$} node[near end, below,scale=0.7]{\scriptsize$c_2$};

\draw[very thick] (0.88,-0.5) -- (0,0) node[midway,sloped]{$<$} node[near end, below,scale=0.7]{\scriptsize$f_2$};
\draw (0.55,-0.4) node[scale=0.7]{\scriptsize$f_1$};

\draw[very thick] (0,0) -- (-0.88,-0.5) node[midway, sloped]{$<$} node[near start, above,scale=0.7]{\scriptsize$d_1$} ;
\draw (-0.6,-0.2) node[scale=0.7]{\scriptsize$d_2$};

\draw[very thick] (0,1) -- (-0.88,-0.5) node[midway,sloped]{$<$} ;
\draw (-0.3,0.75) node[scale=0.7]{\scriptsize$e_1$};
\draw (-0.7,0) node[scale=0.7]{\scriptsize$e_2$};

\draw[very thick] (0,1) -- (0,0.75) node[near end, sloped]{$>$};
\draw[very thick] (0,0.25) -- (0,0) node[near start, sloped]{$>$};
\draw (0,0.35) node[scale=0.7]{\scriptsize$a_2$};

\draw[very thick] (0,1) -- (0.22,0.625) node[near end,sloped]{$>$}  ;
\draw[very thick] (0.66,-0.125)--(0.88,-0.5) node[near start,sloped]{$>$}  ;
\draw (0.7,0) node[scale=0.7]{\scriptsize$b_2$};
\draw (-0.88,-0.5) node{$\bullet$};
\draw (0.88,-0.5) node{$\bullet$};
\draw (0,0) node{$\bullet$};
\draw (0,1) node{$\bullet$};

\end{tikzpicture}}
\caption{Constructing $(\Gamma_1',j_1')(x)$}
\label{construction_deux}
\end{subfigure}
\begin{subfigure}{0.4\textwidth}
\centering
\begin{tikzpicture}[scale=2.1]
\useasboundingbox (-1,-0.7) rectangle (1,1);
\coordinate (a) at (0,1);
\coordinate (b) at (-30:1);
\coordinate (c) at (-150:1);

\draw[very thick] (a)--(b) node[midway, sloped]{$<$};
\draw[very thick] (b)--(c) node[midway, sloped]{$>$};
\draw[very thick] (c)--(a) node[midway, sloped]{$<$};
\draw[very thick] (a).. controls (180:1) .. (c) node[midway, sloped]{$<$} ;
\draw[very thick] (a).. controls (0:1) .. (b) node[midway, sloped]{$>$} ;
\draw (a) node{$\bullet$};
\draw (b) node{$\bullet$};
\draw (c) node{$\bullet$};
\draw (a)++(-0.3,0) node{\scriptsize$c_1$};
\draw (a)++(0.3,0) node{\scriptsize$a_1$};
\draw (a)++(-0.1,-0.4) node{\scriptsize$b_1$};
\draw (a)++(0.1,-0.4) node{\scriptsize$d_2$};
\draw (c)++(0.2,-0.1) node{\scriptsize$f_1$};
\draw (b)++(-0.2,-0.1) node{\scriptsize$f_2$};
\draw (c)++(-0.3,0.1) node{\scriptsize$c_2$};
\draw (c)++(0.4,0.3) node{\scriptsize$b_2$};
\draw (b)++(0.3,0.1) node{\scriptsize$a_2$};
\draw (b)++(-0.4,0.3) node{\scriptsize$d_1$};
\end{tikzpicture}
\caption{The collapse of $(\Gamma_1',j_1')$ along $e$.}
\end{subfigure}
\caption{The set $c_l^{-1}(c_l(\{[\Gamma_1',j_1']\}))$.}
\label{fig_set_typetwo}
\end{figure}
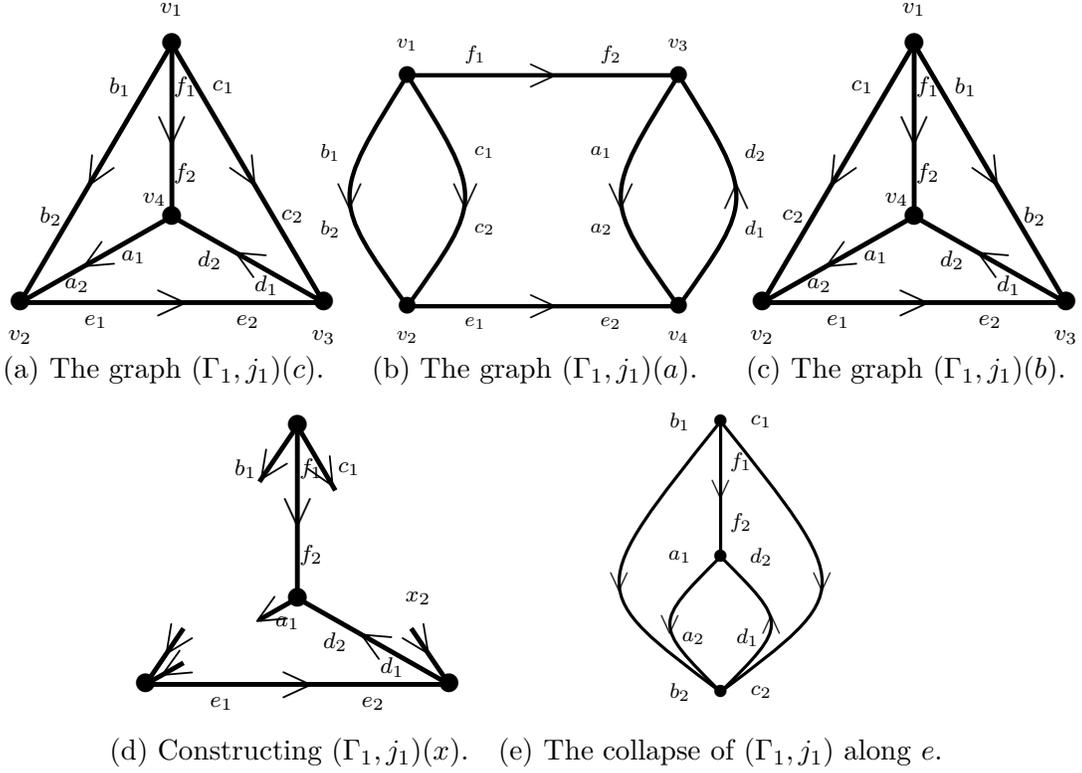

\begin{lem}\label{lem_preimage2}
The set $c_l^{-1}(c_l(\{[\Gamma_1',j_1']\}))$ has three elements. An explicit set of representatives is given by the three labelled edge-oriented graphs of Figures \ref{T_1bisnew}, \ref{TT_1bisnew}, and \ref{T_1IHXbisnew}.
\end{lem}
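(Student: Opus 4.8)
The plan is to run the proof of Lemma~\ref{lem_preimage1} again, with the type-two edge $e=(j_1')^{-1}(l)$ in place of the type-one edge there. First I would let $(\Gamma,j)$ be an edge-oriented labelled trivalent graph whose collapse along $j^{-1}(l)$ is a representative of $c_l([\Gamma_1',j_1'])$, and argue exactly as in Lemma~\ref{lem_preimage1} that, up to isomorphism of labelled edge-oriented graphs, one may assume $H(\Gamma)=H(\Gamma_1')$, $E(\Gamma)=E(\Gamma_1')$, that $\Gamma$ carries the same edge-orientation as $\Gamma_1'$, that $j=j_1'$, and that the two vertices $\{b_2,c_2,f_1\}$ and $\{a_2,d_1,f_2\}$ of $\Gamma_1'$ that are untouched by the collapse of $e$ are again vertices of $\Gamma$. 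Thus $\Gamma$ is obtained from the $3$-vertex graph $c_l([\Gamma_1',j_1'])$ by splitting its unique valence-four vertex, the image of $\{v_1,v_2\}$, into two trivalent vertices joined by the reinserted edge $e$.

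Next I would pin down this splitting. The half-edges of the valence-four vertex are $a_1,b_1,c_1,d_2$, of which $a_1,b_1,c_1$ are source half-edges and $d_2$ is the unique target half-edge; splitting the vertex amounts to choosing which two of these four land on the vertex carrying $e_1$, the remaining two landing on the vertex carrying $e_2$. Now impose that $(\Gamma,j)$ be a labelled $T_1$ or $W_1$. As in the proof of Lemma~\ref{lem_preimage1}, both $T_1$ and $W_1$ have a unique vertex from which at least two, hence exactly three, edges start. The two untouched vertices keep out-degree one, and the new vertex carrying $e_2$ has out-degree at most two because $e_2$ is a target half-edge; so this unique source vertex is the one carrying $e_1$, and its remaining two half-edges must therefore be source half-edges, i.e.\ they lie in $\{a_1,b_1,c_1\}$. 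This forces $d_2$ onto the vertex of $e_2$. Hence $(\Gamma,j)$ is completely determined by which of $a_1,b_1,c_1$ shares a vertex with $e_2$ and $d_2$, and calling the corresponding graphs $(\Gamma_1',j_1')(a)$, $(\Gamma_1',j_1')(b)$, $(\Gamma_1',j_1')(c)$, one checks they are exactly the three graphs drawn with half-edge labels in Figures~\ref{TT_1bisnew}, \ref{T_1IHXbisnew}, and \ref{T_1bisnew}.

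It then remains to check that these three graphs really are pairwise non-isomorphic as labelled edge-oriented graphs, so that the fiber $c_l^{-1}(c_l(\{[\Gamma_1',j_1']\}))$ has exactly three elements and not fewer, and to record that $(\Gamma_1',j_1')(a)$ is a labelled $W_1$ while $(\Gamma_1',j_1')(b)$ and $(\Gamma_1',j_1')(c)=(\Gamma_1',j_1')$ are labelled $T_1$'s with $[(\Gamma_1',j_1')(b)]\neq[(\Gamma_1',j_1')(c)]$. I expect the only delicate point to be this final bookkeeping: carrying the edge-orientation correctly through the collapse and the re-expansion, and distinguishing the double-theta case from the two tetrahedron cases, as well as the two tetrahedron cases from each other, by the position of the double edge relative to the labelled edges, exactly as in Example~\ref{ex_auto_graphs} and in the proof of Lemma~\ref{lem_preimage1}. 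Everything else is the verbatim analogue of the type-one argument.
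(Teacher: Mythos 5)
Your proposal is correct and follows essentially the same route as the paper: after the same normalization up to isomorphism (same half-edges, edges, edge-orientation, labelling, and the two untouched vertices $\{b_2,c_2,f_1\}$, $\{a_2,d_1,f_2\}$), you use the unique out-degree-three vertex of $T_1$ and $W_1$ to force $d_2$ onto the vertex of $e_2$, and then enumerate the three completions by the choice of $a_1$, $b_1$ or $c_1$, exactly as in the paper's proof. Your out-degree counting just makes explicit the paper's terse "so $d_2$ and $e_2$ belong to the same vertex", and the final identifications ($(\Gamma_1',j_1')(a)$ a labelled $W_1$, $(\Gamma_1',j_1')(b)$ and $(\Gamma_1',j_1')(c)$ non-isomorphic labelled $T_1$'s) are asserted, not detailed, in the paper as well.
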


\begin{proof}
Let $(\Gamma',j')$ be an edge-oriented labelled trivalent graph such that the graph obtained by collapsing $\Gamma'$ along $(j')^{-1}(l)$ is a representative of $c_l([\Gamma_1',j_1'])$. Up to isomorphism of labelled edge-oriented graphs, the graph $\Gamma'$ satisfies:
\begin{enumerate}
  \item  $H(\Gamma')=H(\Gamma_1')$, 
  \item $E(\Gamma')=E(\Gamma_1')$,
  \item the edge-orientation of $\Gamma$ is the same as the edge-orientation of $\Gamma_1'$,
  \item $j'=j_1'$,
  \item  $\{f_2,a_2,d_1\}$ and $\{c_2,f_1,b_2\}$ are vertices of $\Gamma'$.
\end{enumerate}
Assume moreover that $(\Gamma',j')$ is a labelled $T_1$ or $W_1$. Then one vertex of $\Gamma'$ is the starting point of three edges. So $d_2$ and $e_2$ belong to the same vertex. The graph $(\Gamma',j')$ is thus determined by the remaining half-edge in the vertex $\{e_2,d_2,*\}$. If this half-edge is $a_1$ (resp. $b_1$ or $c_1$), we let $(\Gamma_1',j_1')(a)$ (resp. $(\Gamma_1',j_1')(b)$, $(\Gamma_1',j_1')(c)$) be the labelled graph obtained in this way. The graph $(\Gamma_1',j_1')(a)$ is a labelled $W_1$. The graph $(\Gamma_1',j_1')(b)$ is a labelled $T_1$, and we have:
$$[(\Gamma_1',j_1')(c)]\neq [(\Gamma_1',j_1')(b)].$$
\end{proof}

\begin{lem}\label{lem_IHXd}
We have:
$$\sum_{[(\Gamma,j)]\in c_l^{-1}(c_l(\{[\Gamma_1',j_1']\}))} w(\Gamma)I(\Gamma,j,j^{-1}(l))=0.$$
\end{lem}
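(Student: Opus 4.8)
The plan is to imitate the proof of Lemma~\ref{lem_IHXu} almost verbatim, replacing the type-one data $(\Gamma_1,j_1)$ by the type-two data $(\Gamma_1',j_1')$. First I would invoke Lemma~\ref{lem_preimage2}, which says that $c_l^{-1}(c_l(\{[\Gamma_1',j_1']\}))$ consists of exactly three classes, represented by $(\Gamma_1',j_1')(a)$, $(\Gamma_1',j_1')(b)$, $(\Gamma_1',j_1')(c)$ of Figures~\ref{TT_1bisnew}, \ref{T_1IHXbisnew}, \ref{T_1bisnew}, of respective isomorphism types $W_1$, $T_1$, $T_1$. Thus the statement reduces to the single identity $w(\Gamma_1'(c))I((\Gamma_1',j_1')(c),e)+w(\Gamma_1'(b))I((\Gamma_1',j_1')(b),e)+w(\Gamma_1'(a))I((\Gamma_1',j_1')(a),e)=0$, where $e=(j_1')^{-1}(l)$.

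Next I would fix, for each $x\in\{a,b,c\}$, the numbering of $V(\Gamma_1'(x))$ and the vertex-orientation $o(\Gamma_1'(x))$ given by Figure~\ref{fig_set_typetwo}. Since every $\Gamma_1'(x)$ is isomorphic, as an edge-oriented graph, to $T_1$ or $W_1$, Lemma~\ref{lem_match_or} guarantees that the orientation of $C_{V(\Gamma_1'(x))}(M)$ coming from the pair (edge-orientation, $o(\Gamma_1'(x))$) is the one coming from the numbering $i\mapsto v_i$; this identifies $w(\Gamma_1'(x))I((\Gamma_1',j_1')(x),e)$ with $w(\Gamma_1'(x),o(\Gamma_1'(x)))I((\Gamma_1',j_1')(x),o(\Gamma_1'(x)),e)$ and puts all three terms on a common footing. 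Then I would apply Lemma~\ref{def_lin}: the three graphs are obtained from the common collapse of $(\Gamma_1',j_1')$ along $e$ by the three ways of re-splitting the valence-four vertex, so, up to antisymmetry, they form a Jacobi relation, whence $w(\Gamma_1'(c),o(\Gamma_1'(c)))-w(\Gamma_1'(a),o(\Gamma_1'(a)))+w(\Gamma_1'(b),o(\Gamma_1'(b)))=0$.

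The remaining, geometric, step is to prove $I((\Gamma_1',j_1')(c),o,e)=I((\Gamma_1',j_1')(b),o,e)=-I((\Gamma_1',j_1')(a),o,e)$. For this I would exhibit, exactly as in the proof of Lemma~\ref{lem_IHXu}, two diffeomorphisms of one-edge faces: the identity of $\{1,2,3,4\}$ (through the chosen numberings) induces an orientation-preserving diffeomorphism $F(\Gamma_1'(c),e)\to F(\Gamma_1'(b),e)$ compatible with all the projections $p_x$, giving the first equality; and a suitable transposition of two vertices induces an orientation-reversing diffeomorphism $F(\Gamma_1'(c),e)\to F(\Gamma_1'(a),e)$, again compatible with the $p_x$, which after the change of variables $\indi\times f$ produces the sign. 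Combining with the weight relation, writing $I(c)$ for the common value, one gets $w(\Gamma_1'(c))I(c)+w(\Gamma_1'(b))I(c)-w(\Gamma_1'(a))I(c)=\bigl(w(\Gamma_1'(c),o)-w(\Gamma_1'(a),o)+w(\Gamma_1'(b),o)\bigr)I(c)=0$.

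I expect the main obstacle to be bookkeeping rather than anything conceptual: one must check that with the specific vertex-orientations of Figure~\ref{fig_set_typetwo} the three re-expansions genuinely realize a Jacobi relation with the correct signs, and that the two vertex-set bijections really do intertwine the maps $p_x$ and have the stated orientation behaviour --- in particular, the sign pattern of the weight relation must match the sign pattern $I(c)=I(b)=-I(a)$ of the integrals, as it did in Section~\ref{sec_typeone}. These are finite diagram verifications, entirely parallel to the type-one case, but they are where sign errors could creep in.
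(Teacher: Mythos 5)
Your proposal is correct and follows essentially the same route as the paper: reduce to three terms via Lemma~\ref{lem_preimage2}, fix the numberings and vertex-orientations of Figure~\ref{fig_set_typetwo}, invoke Lemma~\ref{lem_match_or} and Lemma~\ref{def_lin} for the weight relation, and relate the integrals by vertex-set bijections (the identity for $(c)$ versus $(b)$, and a transposition --- the paper uses $(23)$ --- for the orientation-reversing comparison of $(c)$ with $(a)$). The only difference is that the paper names the transposition explicitly, which is exactly the kind of bookkeeping you flagged.
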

\begin{proof}
   The proof is similar to the proof of Lemma \ref{lem_IHXu}. Applying Lemma \ref{lem_preimage2}, it suffices to prove the following equality:
   \begin{align*}
w(\Gamma_1'(c))I((\Gamma_1',j_1')(c),e)+w(\Gamma_1'(b))I((\Gamma_1',j_1')(b),e)+w(\Gamma_1'(a)))I((\Gamma_1',j_1')(a),e)=0.
\end{align*}
Let $x$ be in $\{a,b,c\}$. We equip $V(\Gamma_1'(x))$ with the numbering given in Figure \ref{fig_set_typetwo}. We let $o(\Gamma_1'(x))$ be the vertex-orientation associated with the drawing in Figure \ref{fig_set_typetwo}. Lemma \ref{lem_match_or} implies that the orientation of $C_{V(\Gamma_1'(x))}(M)$ associated with the pair (edge-orientation, $o(\Gamma_1'(x))$) coincides with the orientation of $C_{V(\Gamma_1'(x))}(M)$ associated with the orientation of $V(\Gamma_1'(x))$.
The bijection from $V(\Gamma_1'(c))$ to $V(\Gamma_1'(a))$ associated with the transposition $(23)$ induces an orientation-reversing diffeomorphism from  $F(\Gamma_1'(c),e)$ to $F(\Gamma_1'(a),e)$. This proves:
$$I((\Gamma_1',j_1')(c),o(\Gamma_1'(c)),e)=-I((\Gamma_1',j_1')(a),o(\Gamma_1'(a)),e).$$
   We conclude as in the proof of Lemma \ref{lem_IHXu}.
\end{proof}

\section{Towards applications}
\label{sec_app}

In this section, we give more illustrations of the interest of our Theorem \ref{thm_secondcount}. Recall from the end of Section \ref{sec_main} that we are computing $\lambda_2$ from a Heegaard diagram of the manifold using Morse propagating chains and forms in a work in progress. In Section \ref{sec_transversality}, we provide additional details explaining why our definition of $(z_{KKT})_2$ is more suitable for concrete computations than the original one. In particular, we show why the needed transversality in discrete computations requires the use of distinct propagating chains. In Section \ref{sec_LMO_KKT}, we show that knowing the values of $\lambda_2$ on the lens spaces $L(p,1)$ for all prime numbers $p$ is enough to determine the difference between the invariants $(\zlmo)_2$ and $(\zkkt)_2$.

\subsection{Why transversality requires distinct propagating chains}

\label{sec_transversality}

Recall that our main Theorem \ref{thm_secondcount} and Lescop's definition of $(\zkkt)_2$ (Theorem \ref{thm_Lcount}) both have a \emph{dual version} where propapating forms and integrals are replaced by propagating chains and algebraic intersections. In order to apply the dual version of Theorem \ref{thm_Lcount} (resp. of Theorem \ref{thm_secondcount}) for a family of six propagating chains $(P_i)_{i\in\pert}$, one must prove that for every graph $\Gamma\in \T\cup\W$ (resp. $\Gamma\in\{T_1,T_2,W_1,W_2\}$) and every $j\in\bijlab(E(\Gamma))$ the intersection $\bigcap_{e\in E(\Gamma)} p_e^{-1}(P_{j(e)})$ is transverse. If that is the case, we say that the family $(P_i)_{i\in\pert}$ satisfies the transversality condition for Theorem \ref{thm_Lcount} (resp. of Theorem \ref{thm_secondcount}) for $M$ (the ambient manifold). In this section, we check whether or not these conditions are satisfied in some examples. In particular, we show the following facts.
\begin{itemize}\item  There exists a propagating chain $P$ of $S^3$ such that the constant family $(P_i=P)_{i\in\pert}$ satisfies the transversality condition for Theorem \ref{thm_secondcount}. See Example \ref{ex_non_transverse}.
 \item In contrast, for any rational homology sphere $M$ and any propagating chain $P$ of $M$, the constant family $(P_i=P)_{i\in\pert}$ does not satisfy the transversality condition for Theorem \ref{thm_Lcount}. See Examples \ref{ex_non_transverse} and \ref{ex_non_transverse_inf}.
\end{itemize}

The second fact justifies the last claim of Remark \ref{rque_simplier}: for any rational homology sphere, we must to be able to use distinct propagating chains to achieve transversality in the dual version of Theorem \ref{thm_Lcount}. The first fact shows that this claim is not true for the dual version of Theorem \ref{thm_secondcount} in the case of $S^3$. In Remark \ref{rque_trans_lambda}, we explain why it is likely to be true for the dual version of Theorem \ref{thm_secondcount} in the case of a general rational homology sphere.

\begin{exemple}\label{ex_non_transverse}
  Let $a$ be in $S^2$. Recall that $G_{S^3}^{-1}(a)$ is a propagating chain of $S^3$. Below, we show that there exists $T \in \T$ such that the intersection of the $p_e^{-1}(G_{S^3}^{-1}(a))$ over $E(T)$ is not transverse. Consider the graph $T_<$ obtained from $T_1$ by reversing the orientation of the edge $e_{24}$. Let $(t_1,t_2,t_3)\in\R^3$ be such that $0<t_1<t_2<t_3$. The configuration $(0,t_1a,t_2a,t_3a)$ is in the intersection of the preimages of $G_{S^3}^{-1}(a)$ under the maps associated to the edges of $T_<$. We have a three-parameter family in the intersection. So the intersection is not transverse.

  In contrast, for any graph $\Gamma$ in our family $\{T_1,T_2,W_1,W_2\}$ the intersection of the $p_e^{-1}(G_{S^3}^{-1}(a))$ over $E(\Gamma)$ is empty. This gives an easy direct computation of $\tilde{\lambda}_2(S^3)=\lambda_2(S^3)=0$ with a single propagating chain. (This result can also be deduced from Theorem \ref{thm_Lcount} using three distinct propagating chains. Indeed, let $b,c\in S^2$ be such that $a,b,c$ are not coplanar. Let $(P_i)_{i\in\pert}$ be a family of six propagating chains  consisting of four propagators $G_{S^3}^{-1}(a)$, one propagator $G_{S^3}^{-1}(b)$ and one propagator $G_{S^3}^{-1}(c)$. Then for any graph $\Gamma\in\T\cup\W$ the intersection of the $p_e^{-1}(P_{j(e)})$ over $E(\Gamma)$ is empty.)
  
\end{exemple}

Example \ref{ex_non_transverse_inf} is an infinitesimal version of Example  \ref{ex_non_transverse}. It shows that the dual version of Theorem \ref{thm_Lcount} could never be applied
with the same propagating chain on every edge, because non-transverse intersections would always occur on the boundary of $C_4(M)$. We first need the following remark.

\begin{rque}
Recall that $U\check{M}$ is part of $\partial C_2(M)$. Below we show that any propagating chain $P$ of $M$ must intersect any fiber of $U\check{M}$. The second homology group of $C_2(M)$ with rational coefficients $H_2(C_2(M);\Q)$ is generated by the homology class $[S]$ of a fiber $S$ of the unit tangent bundle to $\check{M}$. Any propagating chain $P$ of $M$ satisfies the following homological property.
For any $2$-cycle $F$ of $C_2(M)$ transverse to $P$,
we have $[F]=\langle P,F\rangle_{C_2(M)} [S] $ in $H_2(C_2(M);\Q)=\Q [S]$, where $\langle P,F\rangle_{C_2(M)}$ denotes the algebraic intersection of $P$ and $F$ in $C_2(M)$.
So $P$ must intersect any fiber of $U\check{M}$.
\end{rque}

\begin{exemple}\label{ex_non_transverse_inf}
Let $\gamma \colon ]-1,1[ \to \check{M}$ be an immersion such that the direction $a$ of $\gamma^{\prime}(0)$ at $m=\gamma(0)$ is in $P$. Let $(t_1,t_2) \in \R^2$ be such that
$1<t_1<t_2$. Then the limit at $0$ of \begin{equation*}
 \Bigl(\bigl(\gamma(0), \gamma(t), \gamma(tt_1),\gamma(tt_2) \bigr) \in \check{C}_4(\check{M}) \Bigr)_{t \in ]0,1[}                                     
\end{equation*}
 exists in $C_4(M)$. It belongs to $\partial C_4(M)$. Denote it by $c_{m,a}(t_1,t_2)$. Recall the graph $T_<$ from Example \ref{ex_non_transverse}. 
The restriction map associated to each edge of $T_<$ maps the degenerate configuration $c_{m,a}(t_1,t_2)$ to $a$. So associating the same propagating chain to every edge of $T_<$ yields a degenerate configuration $c_{m,a}(t_1,t_2)$ in the intersection of the preimages of $P$ under the maps associated to the edges of $T_<$. Therefore, the intersection could never be transverse with Lescop’s definition of $(\zkkt)_2$ with the same propagating chain at every edge.
(The configurations $c_{m,a}(t_1,t_2)$ are actually all distinct, and we even have a $2$-parameter family in the intersection.) This is why we have to use several propagating chains in general to get transverse intersections.
\end{exemple}

\begin{rque}\label{rque_trans_lambda}
Let $P$ be a propagating chain of $C_2(M)$. Assume that $P$ intersects $U\check{M}$ as a section of the unit tangent bundle. (This can be achieved.) Associate $P$ to every edge of our graphs $T_1$, $T_2$, $W_1$ and $W_2$. The restrictions of the edge-orientations and the existence of oriented cycles \cycletheta and \cycletetra in our formula guarantee that no degenerate configuration as above is in the intersection of the preimages of $P$ under the maps associated to the edges for any oriented graph in our collection.

The preimages of $P$ under the two maps associated to the parallel edges of a double-theta $W_1$ or $W_2$ coincide in $C_4(M)$. So any non-empty intersection associated with a double-theta $W_1$ or $W_2$ where the two parallel edges are equipped with $P$ is not transverse. This is why we also use distinct propagating chains in our case. 
\end{rque}

\subsection{About the identification between the degree two parts of the invariants LMO and KKT}
\label{sec_LMO_KKT}

In this section, we compare the degree two parts of the invariants $\zlmo$ and $\zkkt$. We will need the following well-known property of $\lambda_2=\tilde{\lambda}_2$.

\begin{lem}
 Let $M$ be a rational homology sphere. We have 
 $$\lambda_2(-M)=\lambda_2(M).$$
\end{lem}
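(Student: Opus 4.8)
The plan is to show that orientation reversal of $M$ acts on all the data entering the definition of $\lambda_2$ in a way that preserves each integral $I(M,(\omega_i)_i,(\Gamma,j))$, so that $\lambda_2(-M) = \lambda_2(M)$. First I would analyse how reversing the orientation of $M$ affects the configuration space $C_4(M)$ and the propagating forms. Reversing the orientation of $M$ reverses the orientation of $\check{M}^4$, hence multiplies the orientation of $\check{C}_4(\check{M})$ (and its compactification $C_4(M)$) by $(-1)^4 = +1$; so the fundamental class is unchanged. One must be a little careful, though, about the Gauss map: an orientation-reversing identification near $\infty$ composed with the model $G_{S^3}$ produces a Gauss map for $-M$ that differs from $G_M$ by the antipodal map $\iota_{S^2}$ of $S^2$. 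Concretely, a propagating form $\omega$ of $M$ built from $\omega_{S^2}$ gives a propagating form of $-M$ built from $\iota_{S^2}^*(\omega_{S^2})$, which still integrates to $1$ over $S^2$ (since $\iota_{S^2}$ is orientation-preserving on $S^2$, having degree $(-1)^3 = +1$ as the restriction of $-\mathrm{id}_{\R^3}$, wait — $\iota_{S^2}$ has degree $-1$; so $\int_{S^2}\iota_{S^2}^*(\omega_{S^2}) = -1$, and one rescales or uses $-\iota_{S^2}^*(\omega_{S^2})$). I would sort out these signs precisely at the start, since they are the only subtle point.

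The cleanest approach is probably to invoke the invariance already proved (Theorem~\ref{thm_secondcount}): $\lambda_2(-M)$ does not depend on the choice of propagating forms for $-M$, so I may compute it with whatever family is most convenient. Given a family $(\omega_i)_i$ of propagating forms of $M$, I would produce from it a canonical family of propagating forms of $-M$ — call it $(\bar\omega_i)_i$ — supported on the same underlying space $C_2(-M)$, which is $C_2(M)$ with reversed orientation on the base directions. Then for each graph $\Gamma \in \{T_1,T_2,W_1,W_2\}$ and each labelling $j$, I would show that
$$I(-M,(\bar\omega_i)_i,(\Gamma,j)) = I(M,(\omega_i)_i,(\Gamma,j)),$$
by tracking: (i) the sign $(-1)^4 = +1$ from reorienting $\check{C}_4(\check{M})$, and (ii) the fact that each factor $p_e^*(\bar\omega_{j(e)})$ is $p_e^*(\omega_{j(e)})$ up to a sign coming from the antipodal correction, and the number of edges is $6$, so the total antipodal correction is $(-1)^6 = +1$ if each edge contributes a single sign — but one must check whether the sign per edge is $+1$ or $-1$. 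The key computation is that reversing the orientation of $\R^3$ sends $G_{S^3}(x,y) = \frac{y-x}{\|y-x\|}$ to the same point, not its antipode, because both $x$ and $y$ get negated consistently — so actually $G_{-M}$ and $G_M$ agree on the finite part and differ only through the identification near $\infty$; I would make this precise and conclude that each $I$ is genuinely unchanged.

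Summing over $\Gamma$ and $j$ with the coefficients $\frac{1}{3}$ for $T_1,T_2$ and $1$ for $W_1,W_2$, and the overall normalisation $\frac{1}{2^4\times 6!}$, gives $\lambda_2(-M,(\bar\omega_i)_i) = \lambda_2(M,(\omega_i)_i)$; by Theorem~\ref{thm_secondcount} the left side equals $\lambda_2(-M)$ and the right side equals $\lambda_2(M)$, which is the claim. I would phrase the argument uniformly so as not to repeat it four times: set up a diffeomorphism $\check{C}_4(\check{M}) \to \check{C}_4(\check{M})$ (the identity on the underlying set, a change of orientation convention) and chase the pullbacks of the propagating forms through it.

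The main obstacle I expect is bookkeeping the signs around $\infty$ and the Gauss map correctly: one has to decide on an orientation-reversing diffeomorphism near $\infty$, see that it forces the antipodal map into the definition of the Gauss map for $-M$, and then verify that the product over the six edges of the resulting per-edge signs is $+1$ (together with the trivial $(-1)^4 = +1$ from the base), so that the net effect is trivial. The statement that the degree-two part is even under orientation reversal is consistent with the general fact (Remark~\ref{rque_para}) that the even-degree part of $\zkkt$ is canonical, but I would give the direct argument above rather than appeal to that.
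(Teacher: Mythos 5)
Your overall strategy is the paper's own: transfer the propagating forms of $M$ to propagating forms of $-M$ via an orientation-reversing identification near $\infty$, observe that the orientation of the configuration space is unchanged (four points, $(-1)^4=+1$), track one sign per edge, and invoke Theorem \ref{thm_secondcount} for $-M$ with the transferred family. The skeleton is sound, but your ``key computation'' is wrong: negating both $x$ and $y$ sends $G_{S^3}(x,y)=\frac{y-x}{\|y-x\|}$ to $\frac{x-y}{\|x-y\|}$, which \emph{is} the antipode, not the same point. Consequently, choosing $\phi_{-M,\infty}=-\phi_{M,\infty}$ gives $G_{-M}=\iota_{S^2}\circ G_M=-G_M$ everywhere on $\partial C_2(M)\setminus U\check{M}$ (there is no ``finite part'' on which the two Gauss maps agree; without a trivialisation they are only defined on the faces at infinity), and the correct transferred family is $\bar\omega_i=-\omega_i$. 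In particular you cannot take $\bar\omega_i=\omega_i$: its boundary restriction is $G_{-M}^*\bigl(\iota_{S^2}^*(\omega_{i,S^2})\bigr)$ with $\int_{S^2}\iota_{S^2}^*(\omega_{i,S^2})=-1$, so $(\omega_i)_i$ is not a family of propagating forms of $-M$ and Theorem \ref{thm_secondcount} could not legitimately be applied to it.

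The conclusion is saved exactly by the hedge you already wrote down: the per-edge sign is $-1$, uniformly over the six edges, so the total correction is $(-1)^6=+1$, and together with $(-1)^4=+1$ for the orientation of $C_4(-M)=C_4(M)$ each integral $I$ is unchanged. This is precisely the paper's proof (choose $\phi_{-M,\infty}=-\phi_{M,\infty}$, deduce $G_{-M}=-G_M$, hence $-\omega$ propagates for $-M$, and conclude because an even number of propagating forms are wedged over $C_4(-M)=C_4(M)$). So the fix is simply to delete the erroneous claim, state $\bar\omega_i=-\omega_i$, and let the evenness of the edge count carry the sign; note that this parity argument is also where the statement would fail in odd degree, so it is worth making it explicit rather than trying to argue that no sign appears at all.
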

\begin{proof}
  Note that $C_2(M)=C_2(-M)$. Recall from the discussion after Proposition \ref{bordcdeu} that in order to define the Gauss map $G_{-M}$ we need an orientation-preversing diffeomorphism $\phi_{-M,\infty}$ between a neighbourhood of $\infty\in (-M)$ and $\mathring{B}_{1,\infty}$. We can choose $\phi_{-M,\infty}$ to be $-\phi_{M,\infty}$. We get $G_{-M}=-G_M$. Thus, if $\omega$ is a propagating form on $M$, then $-\omega$ is a propagating form on $(-M)$. To apply Theorem \ref{thm_secondcount} to $(-M)$, we integrate a wedge-product of an even number of propagating forms of $(-M)$ over $C_4(-M)=C_4(M)$. The result follows. 
\end{proof}

Let $M$ be a rational homology sphere.  For a prime number $p$, let $\nu_p(M)$ denote the $p$-adic valuation of $|H_1(M;\Z)|$. We have
$$|H_1(M;\Z)|=\prod_{p \mbox{ \scriptsize prime }} p^{\nu_p(M)}.$$
Note that $\nu_p(M)=\nu_p(-M)$ for any prime number $p$. 
The invariant $(\zlmo)_2$ is valued in the same space as $(\zkkt)_2$. We define a numerical invariant $\lambda_{2,\mbox{\scriptsize LMO }}$ of rational homology spheres by the formula $$(\zlmo)_2 = \lambda_{2,\mbox{\scriptsize LMO }} \left[\tetraeq\right].$$
We also have $\lambda_{2,\mbox{\scriptsize LMO }}(-M)=\lambda_{2,\mbox{\scriptsize LMO }}(M)$.
In the following proposition, we show that a theorem of Moussard (\cite[Proposition 1.11]{moussardAGT}) implies that $\lambda_2$ and $\lambda_{2,\mbox{\scriptsize LMO }}$  coincide up to a combination of the invariants $\nu_p$.

\begin{prop}\label{prop_LMO_KKT}
We have
$$\lambda_2-\lambda_{2,\mbox{\scriptsize LMO }}= \sum_{p \mbox{ \scriptsize prime }} \Bigl(\lambda_2(L(p,1)) - \lambda_{2,\mbox{\scriptsize LMO }}(L(p,1))\Bigr)\nu_p.$$
Note that the right-hand side sum is finite when evaluated at a $\Q$-sphere.
 \end{prop}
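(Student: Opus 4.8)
The plan is to recognize $\delta := \lambda_2 - \lambda_{2,\mbox{\scriptsize LMO }}$ as a finite type invariant of rational homology spheres of degree at most two that vanishes on every integer homology sphere, and then to read off its value from Moussard's classification of such invariants together with an evaluation on the lens spaces $L(p,1)$. For the degree claim: $\zkkt$ is a universal finite type invariant of rational homology spheres, so its degree two part $(\zkkt)_2$, hence the numerical invariant $\lambda_2 = \tilde{\lambda}_2$, is a finite type invariant of degree at most two; likewise $(\zlmo)_2$, hence $\lambda_{2,\mbox{\scriptsize LMO }}$, is a finite type invariant of degree at most two, since the LMO invariant is also a universal finite type invariant of rational homology spheres. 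Therefore $\delta$ is a finite type invariant of degree at most two. Moreover, as recalled in the introduction, the degree two parts of the KKT and LMO invariants coincide on integer homology spheres, so $\delta$ vanishes on every $\Z$-sphere.

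Next I would invoke \cite[Proposition 1.11]{moussardAGT}, which implies that a finite type invariant of rational homology spheres of degree at most two whose restriction to integer homology spheres is zero is a linear combination of the invariants $\nu_p$. Hence there is a family $(c_p)_{p \text{ prime}}$ of real numbers such that
$$\delta = \sum_{p \text{ prime}} c_p\,\nu_p,$$
the sum having only finitely many nonzero terms on any fixed $\Q$-sphere $M$, because $|H_1(M;\Z)|$ is then a finite integer and hence divisible by only finitely many primes.

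Finally I would pin down the coefficients $c_p$ by evaluating this identity on lens spaces. Since $H_1(L(p,1);\Z)\cong \Z/p\Z$, one has $|H_1(L(p,1);\Z)| = p$, so for primes $p$ and $q$ the value $\nu_q(L(p,1))$ equals $1$ when $q = p$ and $0$ otherwise. Evaluating the displayed identity at $L(p,1)$ therefore gives $c_p = \delta(L(p,1)) = \lambda_2(L(p,1)) - \lambda_{2,\mbox{\scriptsize LMO }}(L(p,1))$, and substituting these values back yields the announced formula, whose right-hand side is finite on a $\Q$-sphere by the remark just made.

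The step I expect to be the main obstacle is the first one: justifying that $\lambda_2$ and $\lambda_{2,\mbox{\scriptsize LMO }}$ really fall within the scope of \cite[Proposition 1.11]{moussardAGT} — that is, identifying the degree two parts of the KKT and LMO invariants with finite type invariants of degree at most two in Moussard's sense — and invoking the precise form of that proposition. Once these are granted, the remainder is a short evaluation.
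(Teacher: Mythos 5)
There is a genuine gap at the step you yourself flag as the main obstacle, and it does not go through as stated. In Moussard's setting, which is the one in which $\nu_p$ has degree $1$ (\cite[Proposition 1.9]{moussardAGT}) and the Casson--Walker invariant has degree $2$, the degree two parts of $\zkkt$ and $\zlmo$ are \emph{not} finite type invariants of degree at most two: the universality/splitting results imply that $(\zkkt)_n$ and $(\zlmo)_n$ are additive finite type invariants of degree $2n$, so $\lambda_2$ and $\lambda_{2,\mbox{\scriptsize LMO }}$ have degree $4$, not $2$. (They cannot have degree $2$: Moussard's classification shows the space of additive invariants of degree at most $2$ is spanned by $\lambda_{\mbox{\scriptsize CW}}$ and the $\nu_p$, and $\lambda_2$ is not of this form.) Consequently you cannot conclude that $\delta=\lambda_2-\lambda_{2,\mbox{\scriptsize LMO }}$ has degree at most $2$ by subtracting two degree-$2$ invariants. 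The missing bridge is exactly what the paper supplies: by Lescop's splitting formulas for $\zkkt$ (\cite{lessumgen}) and Massuyeau's for $\zlmo$ (\cite{massuyeausplit}), both invariants are \emph{additive} of degree $4$ and their classes in $I_4^c/I_3^c$ coincide, and Moussard's Proposition 1.11 gives $I_3^c=I_2^c$; only then does $\delta$ land in $I_2^c=\operatorname{span}\bigl(\lambda_{\mbox{\scriptsize CW}},(\nu_p)_p\bigr)$. A second, related omission: the classification you invoke is for \emph{additive} invariants, and you never address additivity of $\lambda_2$ and $\lambda_{2,\mbox{\scriptsize LMO }}$; without it, the statement \say{degree at most two and vanishing on $\Z$-spheres implies a combination of the $\nu_p$} is simply false (products such as $\nu_p\nu_q$ are finite type of degree $2$ and vanish on $\Z$-spheres).

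Once the correct reduction to $\delta=\alpha\,\lambda_{\mbox{\scriptsize CW}}+\sum_p\alpha_p\nu_p$ is in place, the rest of your argument is fine and even offers a legitimate variant: you kill the $\lambda_{\mbox{\scriptsize CW}}$ term by using the known coincidence of the degree two parts on integer homology spheres (where the $\nu_p$ vanish but the Casson invariant does not vanish identically), whereas the paper instead uses orientation reversal, namely $\lambda_{\mbox{\scriptsize CW}}(-M)=-\lambda_{\mbox{\scriptsize CW}}(M)$ while $\lambda_2$, $\lambda_{2,\mbox{\scriptsize LMO }}$ and $\nu_p$ are unchanged, which requires the small preliminary lemma $\lambda_2(-M)=\lambda_2(M)$ but avoids invoking the coincidence on $\Z$-spheres. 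Your evaluation at $L(p,1)$, using $\nu_q(L(p,1))=\delta_{qp}$, and the finiteness remark are correct and identical to the paper's.
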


\begin{proof}
Say that an invariant is \emph{additive} if it is addtive with respect to the connected sum. As in Moussard's article, let $I_{n}^c$ denote the vector space of real-valued additive finite type invariants of degree at most $n$ of rational homology spheres. With Moussard's terminology, the invariants $\nu_p$ for $p$ a prime number are of degree $1$ (Proposition 1.9). By Corollary 1.10 of \cite{moussardAGT}, the space $I_1^c$ is generated by the maps $\nu_p$ for prime numbers $p$.

 The Casson-Walker invariant $\lambda_{\mbox{\scriptsize CW}}$ is an additive invariant of degree $2$. Proposition 1.11 of \cite{moussardAGT} determines the quotients $I_{n}^c/I_{n-1}^c$ for any integer $n>1$.  It implies that the quotient $I_2^c/I_1^c$ is one-dimensional, generated by the class of $\lambda_{\mbox{\scriptsize CW}}$, and that $I_3^c=I_2^c$. 
 
 Universality properties with respect to finite type invariants of rational homology spheres were proven for $\zkkt$ by Lescop in \cite{lessumgen} (see also \cite[Theorem 18.5]{lesbookzv2}), and for $\zlmo$ by Massuyeau in \cite{massuyeausplit}. These properties imply in particular that the invariants $\lambda_2$ and $\lambda_{2,\mbox{\scriptsize LMO }}$ are additive finite type invariants of degree $4$, and that the classes of $\lambda_2$ and $\lambda_{2,\mbox{\scriptsize LMO }}$ in $I_4^c/I_3^c$ coincide. \footnote{More generally, the universality properties imply that $(\zkkt)_n$ and $(\zlmo)_n$ are additive finite type invariants of degree $2n$ and coincide on rational homology spheres up to additive invariants of degree $2n-1$ for any integer $n\geq 1$.}

Thus there exist real numbers $\alpha$, and $\alpha_p$ for each prime number $p$, such that
$$\lambda_2-\lambda_{2,\mbox{\scriptsize LMO }}=\alpha \lambda_{\mbox{\scriptsize CW}} + \sum_{p \mbox{ \scriptsize prime }} \alpha_p \nu_p.$$
For any rational homology sphere $M$, we have $\lambda_{\mbox{\scriptsize CW}}(-M)=-\lambda_{\mbox{\scriptsize CW}}(M)$. Let $M$ be a rational homology sphere $M$ such that $\lambda_{\mbox{\scriptsize CW}}(M)\neq 0$. Applying the above formula to $M$ and $-M$, we find $\alpha=0$. Applying the above formula to $L(p,1)$ gives $\alpha_p = \lambda_2(L(p,1))-\lambda_{2,\mbox{\scriptsize LMO }}(L(p,1))$. This concludes the proof.

\end{proof}

In particular, computing $\lambda_2$ for the lens spaces by applying
Theorem \ref{thm_secondcount} with Morse propagators would allow us to characterize $\lambda_2$.

\def\cprime{$'$}
\providecommand{\bysame}{\leavevmode ---\ }
\providecommand{\og}{``}
\providecommand{\fg}{''}
\providecommand{\smfandname}{\&}
\providecommand{\smfedsname}{\'eds.}
\providecommand{\smfedname}{\'ed.}
\providecommand{\smfmastersthesisname}{M\'emoire}
\providecommand{\smfphdthesisname}{Th\`ese}

\end{document}